\newcommand{\vb}{\mathbf{b}}
\newcommand{\vc}{\mathbf{c}}
\newcommand{\vd}{\mathbf{d}}
\newcommand{\ve}{\mathbf{e}}
\newcommand{\vf}{\mathbf{f}}
\newcommand{\vg}{\mathbf{g}}
\newcommand{\vn}{\mathbf{n}}
\newcommand{\vs}{\mathbf{s}}
\newcommand{\vt}{\mathbf{t}}
\newcommand{\vw}{\mathbf{w}}
\newcommand{\vu}{\mathbf{u}}
\newcommand{\vv}{\mathbf{v}}
\newcommand{\vx}{\mathbf{x}}
\newcommand{\vpsi}{\boldsymbol{\psi}}
\newcommand{\vzero}{\mathbf{0}}
\newcommand{\vH}{\mathbf{H}}
\newcommand{\vtH}{\widetilde{\mathbf{H}}}
\newcommand{\tH}{\widetilde{H}}
\newcommand{\vI}{\mathbf{I}}
\newcommand{\vL}{\mathbf{L}}
\newcommand{\calB}{\mathcal{B}}
\newcommand{\calN}{\mathcal{N}}
\newcommand{\calO}{\mathcal{O}}
\newcommand{\calC}{\mathcal{C}}
\newcommand{\calF}{\mathcal{F}}
\newcommand{\vcurl}{\textnormal{\textbf{curl}}}
\newcommand{\R}{\mathbb{R}}
\newcommand{\Pcfn}{P_{\vn}}
\newcommand{\RBFDFtangent}{\vs_\vf^\vt}
\newcommand{\RBFCFnormal}{\vs_\vf^\vn}
\newcommand{\RBFstreamtangent}{\vpsi_{\RBFDFtangent}}
\newcommand{\RBFpresstangent}{q_{\RBFDFtangent}}
\newcommand{\RBFstreamnormal}{\vpsi_{\RBFCFnormal}}
\newcommand{\RBFpressnormal}{q_{\RBFCFnormal}}
\newcommand{\ds}{\displaystyle}
\newcommand{\vertiii}[1]{{\left\vert\kern-0.25ex\left\vert\kern-0.25ex\left\vert #1 \right\vert\kern-0.25ex\right\vert\kern-0.25ex\right\vert}}
\newtheorem{lemma}{Lemma}
\newtheorem{theorem}{Theorem}
\newtheorem{proposition}{Proposition}
\newtheorem{remark}{Remark}
\title{A Radial Basis Function Method for Computing Helmholtz-Hodge Decompositions\footnote{\today}}
\author{
\begin{tabular}{cc}
Edward J. Fuselier & Grady B. Wright\thanks{Research supported by grant
DMS-0934581, and DMS-0540779 from the National Science Foundation.} \\
Dept.\ of Mathematics and Computer Science & Dept.\ of Mathematics \\
High Point University & Boise State University \\
High Point, NC 27262 & Boise, ID 83725-1555
\end{tabular}
}
\begin{document}
\maketitle

\begin{abstract}
{A radial basis function (RBF) method based on matrix-valued kernels is presented and analyzed for computing two types of vector decompositions on bounded domains:  one where the normal component of the divergence-free part of the field is specified on the boundary, and one where the tangential component of the curl-free part of the field specified. These two decompositions can then be combined to obtain a full Helmholtz-Hodge decomposition of the field, i.e.\ the sum of divergence-free, curl-free, and harmonic fields.  All decompositions are computed from samples of the field at (possibly scattered) nodes over the domain, and all boundary conditions are imposed on the vector fields, not their potentials, distinguishing this technique from many current methods.  Sobolev-type error estimates for the various decompositions are provided and demonstrated with numerical examples.}
{Radial Basis Functions; Kernel Methods; Vector Decomposition; Divergence-free Approximation; Curl-free Approximation.}
\end{abstract}

\section{Introduction}
In the literature the phrases ``Helmholtz decomposition,'' ``Hodge decomposition,'' and ``Helmoltz-Hodge decomposition'' are used to describe a variety of vector decompositions in which a given field $\vf$ is written as a sum of divergence-free and curl-free fields. We will refer to any such decomposition as a Helmholz-Hodge decomposition (HHD). These decompositions are fundamental to many applications, from fluid dynamics and electromagnetics, to computer graphics and imaging. Each component plays an essential role in the underlying application. For example, the incompressible Navier-Stokes' equations describe the dynamics of an  incompressible fluid, the velocity field of the fluid is divergence-free while the (hydrostatic) pressure is curl-free.  This fact is exploited in projection methods, which are the dominant strategy employed for numerically solving these equations~\cite{Chorin1968,Temam1969}. A more general version of such a decomposition is given by the Hodge Theorem \cite{SchwarzHodgeBook}, which implies that vector fields $\vf$ on a compact domain $\Omega\subset\R^d$ can be split into the sum
$\vf = \vw + \nabla p + \nabla h$,
where $\vw$ is divergence-free and tangent to the boundary, $\nabla p$ is curl-free and normal to the boundary, and the scalar function $h$ is harmonic. This ``full'' HHD is used in graphics for detecting singularities (e.g. sinks, sources, and vortices) in vector fields that arise in various disciplines~\cite{PolthierPreuss2002}.

Several techniques exist to compute HHDs, with most making use of the vector field sampled on a mesh or grid.  The standard approach employed is to recast the problem in terms of a Poisson equation for a potential function $p$.   More specifically, given a vector field $\vf$, one numerically solves $\Delta p = \nabla \cdot \vf$, using, for example, finite difference or finite element methods. It follows then that $\vf$ is the sum of $\nabla p$ (which is curl-free) and $\vf - \nabla p$ (which is approximately divergence free).  One drawback of this approach is that in many applications it is not clear how to impose the correct boundary conditions on the Poisson problem for the potential $p$.  This is in part because the boundary conditions are typically imposed on the divergence-free or curl-free fields directly, not on the potentials for these fields.  For example, with regard to solving the incompressible Navier-Stokes equation, standard projection methods require a decomposition by calculating a pressure $p$ as the solution of a Poisson problem. However, the pressure does not have a boundary condition as it plays the role of a Lagrange multiplier, with its value being whatever it has to be to make the velocity field divergence-free~\cite{FLD:FLD598}.

Other techniques for decomposing vector fields use basis functions that are customized to split into analytically divergence- and curl-free parts.  These methods avoid having to explicitly solve a Poisson problem, but do require solving some other type of problem (e.g.\ an interpolation problem). Examples on periodic domains include those utilizing wavelets \cite{Deriaz_HelmholtzDecompWavelets}, and meshless kernel methods such as spherical basis functions \cite{Freeden_vectorsplines, FuselierWright2009:SphereDecomp}.  For domains with boundaries, a meshless radial basis function (RBF) method was developed for numerically solving certain static fluid problems (see \cite{WendlandScrader2011_Darcy,Wendland2009:DivFreeStokes}), with a by-product of this approach being a method for computing a certain type of decomposition.  

In this paper we develop and provide error estimates for a meshless RBF method for computing two standard vector decompositions on bounded domains in $\mathbb{R}^{d\geq 2}$: one where the normal component of the divergence-free part of the field specified on the boundary, and one where the tangential component of the curl-free part of the field is specified.  These decompositions can then be combined to compute the full HHD on a bounded domain.  Our approach utilizes matrix-valued RBFs that split into analytically divergence-free and curl-free parts. Each decomposition is obtained by solving a generalized interpolation problem, with the boundary conditions appearing on the velocity field variables and not on the potentials, and gives rise to a positive definite linear system of equations.  While we never work with the (vector and scalar) potentials of the components of the decomposed field directly, these potentials can be easily recovered at no added computational cost.  Our method provides accurate decompositions, but does require global information.  As such, a drawback, as is the case with many global kernel-based methods, is expense.  We hope this can be mitigated by employing approaches similar to those in the scalar kernel theory, such as using a multiscale approach \cite{WendlandFarrellMultiscaleRBFsPDEs} or by employing a localized basis~\cite{BeatsonBetterBasis,5manLocalBasis}, but this will be reported on separately.

As noted above the technique described in~\cite{WendlandScrader2011_Darcy,Wendland2009:DivFreeStokes} also gives rise to methods for computing certain vector decompositions in $\mathbb{R}^d$.   In fact, a vector decomposition as in Proposition 1 was obtained in \cite{WendlandScrader2011_Darcy}.  In these papers the authors use ``combined kernels'', which are constructed by incorporating a $d\times d$ divergence-free kernel with a scalar RBF to obtain a larger $(d+1)\times (d+1)$ kernel.   Our approach is different in that instead of combining kernels to make a larger one, we sum kernels with properties to match the HHD, which results in a diagonal $d\times d$ matrix-valued kernel. Though not obvious at first appearance, it can be shown that the techniques are in fact equivalent for a certain choice of the scalar kernel in the combined method. However, we approach the problem from a different perspective---instead of using a combined kernel that sets out to model the components of the vector field with separate kernels, we model the field directly with a single kernel that splits naturally. A practical by-product of this approach is that a large portion of the interpolation matrix becomes block-diagonal, which gives savings in terms of storage and computational efficiency. Where there is overlap in our work with previous work, we offer improvements in error estimates in terms of the order of approximation\footnote{Previous work derived estimates measured in the $H^1$ norm. We extend this to $L_2$, which gives an extra order of approximation.} and the domains on which they apply. We also include a vector decomposition not treated with kernel methods before (as described in Proposition 2) and develop the first kernel method for computing the full HHD.

The paper is organized as follows. Section \ref{prelims} contains the necessary preliminaries on function spaces and vector decompositions. In Section \ref{kernelprelims} we give background information on scalar and matrix-valued RBFs. Next, the construction of our kernel decompositions are described in detail in Section \ref{decompconstructions}. Error estimates and numerical experiments are presented in Sections \ref{mainerrorsection} and \ref{numerics}, respectively. We end the paper with some concluding remarks regarding decompositions with other boundary conditions.

\section{Preliminaries}\label{prelims}

We will distinguish between scalar and vector valued functions by denoting the latter in bold-face. We denote the gradient and divergence in the usual way, i.e. $\nabla$ and $\nabla\cdot$. The curl operator on three dimensional fields will be denoted by $\vcurl(\vf)$. Given a scalar valued function $f:\R^2\rightarrow \R$, we will use the same notation for $\vcurl(f):= (-\partial_yf,\partial_xf)$ --- this should cause no confusion. We will let $\Omega$ denote a connected open domain in $\R^d$ with boundary $\Gamma$ of H\"older class $\calC^{m,1}$ for some nonnegative integer $m$.

\subsection{Function spaces}

The function spaces we will work with are all Hilbert spaces: $L_2(\Omega)$ will denote the space of square integrable functions on $\Omega$, and $\vL_2(\Omega)$ will denote the space of all vector fields with in $L_2(\Omega)$. Given $s\geq 0$, we let $H^s(\Omega)$ denote the Sobolev class of functions on $\Omega$ with smoothness $s$, and denote its vectorial analogue by $\vH^s(\Omega)$. When the underlying domain is $\R^d$, we use the Fourier transform form of the inner product in these spaces. For example, the inner product on $\vH^s(\R^d)$ is given by
\begin{equation}\label{FTSobolevinnerproduct}
( \vf,\vg)_{\vH^s(\R^d)} := \int_{\R^d}{\overline{\widehat{\vf}(\omega)}^T}\widehat{\vg}(\omega)(1 + |\omega|^2)^{s}\,d\omega,
\end{equation}
where $\widehat{\vf}$ denotes the Fourier transform of $\vf$ and $|\omega|$ denotes the Euclidean length of $\omega\in\R^d$. We will also need the space of functions $\tH^s(\R^d)$, which is endowed with the inner product
\begin{equation}\label{Htildeinnerprod}
( f,g)_{\tH^s(\R^d)} := \int_{\R^d}\overline{\widehat{f}(\omega)}\widehat{g}(\omega)\frac{(1 + |\omega|^2)^{s+1}}{|\omega|^2}\,d\omega.
\end{equation}
It can be shown that $\tH^s(\R^d)$ is a subspace of $H^s(\R^d)$ and that $\|f\|_{H^s(\R^d)} \leq \|f\|_{\tH^s(\R^d)}$ for all $f\in\tH^s(\R^d)$ \cite[Proposition 2]{Fuselier2008:StabilityNative}. The space $\vtH^s(\R^d)$ is defined in an analogous way.

We denote the $L_2(\Gamma)$ inner product by $\langle\cdot,\cdot\rangle$. Sobolev spaces on the boundary $\Gamma$ can be defined in various ways. If the boundary is $\calC^{m,1}$, then to define $H^s(\Gamma)$ with $0\leq s\leq  m+1$ one can use charts and a partition of unity (see, for example \cite[Section 1.3.3]{Grisvard}). For $s\geq 0$, we let $H^{-s}(\Gamma)$ denote the dual space to $H^s(\Gamma)$, and the vector-valued cases for these spaces will be denoted in bold-face. 


Our arguments later will require standard operator interpolation on Sobolev spaces. A concise treatment of what we need can be found in \cite[Ch. 14]{BrennerScott_FEMbook}. For the interpolation arguments on boundary spaces, we will use the following fact from \cite[Theorem 7.7]{LionsMagenesVol1}: Let $0<\theta<1$. For all $s_1,s_2 \in \R$ with $s_1>s_2$ we have
\begin{equation}\label{boundaryinterpolationspace}
[H^{s_1}(\Gamma),H^{s_2}(\Gamma)]_{\theta} = H^{(1-\theta)s_1 + \theta s_2}(\Gamma),
\end{equation}
with equivalent norms, where $[H^{s_1}(\Gamma),H^{s_2}(\Gamma)]_{\theta}$ is the interpolation space with parameter $\theta$ between $H^{s_1}(\Gamma)$ and $H^{s_2}(\Gamma)$.

Lastly, we will make use of the following norms, which are both equivalent to $\|\cdot\|_{\vH^{s}(\Omega)}$ for all $s\geq 1$ when $\Gamma$ is at least $\calC^{\lceil s \rceil,1}$:
\begin{align}
\vertiii{\vu}^2_\vn&=\|\vu\|_{\vL_2(\Omega)}^2 + \|\vcurl(\vu)\|_{\vH^{s-1}(\Omega)}^2 + \|\nabla\cdot \vu\|_{H^{s-1}(\Omega)}^2 + \|\vu\cdot\vn\|_{H^{s-1/2}(\Gamma)}^2, \label{equivsobnorm_normal} \\
\vertiii{\vu}^2_\vt&=\|\vu\|_{\vL_2(\Omega)}^2 + \|\vcurl(\vu)\|_{\vH^{s-1}(\Omega)}^2 + \|\nabla\cdot \vu\|_{H^{s-1}(\Omega)}^2 + \|\vu\times\vn\|_{\vH^{s-1/2}(\Gamma)}^2\label{equivsobnorm_tan}.
\end{align}
For integer $s$, see \cite[Corollary 3.7, pg 56]{GiraultRaviart1986} for \eqref{equivsobnorm_normal}  and \cite[Proposition 6', pg. 237]{DautrayLion_vol3} and the proceeding remarks for \eqref{equivsobnorm_tan}. The fractional cases follow from standard interpolation arguments. Though stated here for $d=3$, similar results hold in the two dimensional case.
\subsection{Vector Decompositions}

The Helmholtz-Hodge decomposition for vector fields in $\vL_2(\R^d)$ can be easily described in terms of the Fourier transform. A field $\vf\in \vL_2(\R^d)$ is divergence-free if and only if $\omega^T\widehat{\vf}(\omega)=0$ almost everywhere, and $\vf$ is curl-free if and only if $\widehat{\vf}(\omega) = \omega \widehat{h}(\omega)$ for some $h\in H^1(\R^d)$. Letting $\calF^{-1}:\vL_2(\R^d)\rightarrow \vL_2(\R^d)$ denote the inverse Fourier transform, the operators
\begin{align}\label{eq;projection_operators}
P_{div}\vf := \calF^{-1}\left(\left(I - \frac{\omega\,\omega^T}{\|\omega\|^2}\right)\widehat{\vf}(\omega)\right),\mbox{\hspace{.5in}}P_{curl}\vf := \calF^{-1}\left(\left(\frac{\omega\,\omega^T}{\|\omega\|^2}\right)\widehat{\vf}(\omega)\right),
\end{align}
are projections on $\vL_2(\R^d)$, with $P_{div}\vf$ divergence-free, $P_{curl}\vf$ curl-free, and $P_{div}\vf\perp P_{curl}\vf$. With this, $\vf = P_{div}\vf + P_{curl}\vf$ uniquely decomposes $\vf$ into $\vL_2(\R^d)$-orthogonal divergence-free and curl-free fields. Further, $P_{div}$ and $P_{curl}$ are also orthogonal projections on any space whose inner product is of the form
\begin{equation}\label{weightedL2norm}
(\vf,\vg)_* = \int_{\R^d} \overline{\widehat{\vf}(\omega)}^T\widehat{\vg}(\omega) \,\varphi(\omega)\,d\omega,
\end{equation}
where the weight function $\varphi\geq 0$ is measurable---this includes all Sobolev spaces $\vH^s(\R^d)$ and $\vtH^s(\R^d)$. 
For fields on bounded domains we will focus on the two fundamental decompositions given in the following propositions.
\begin{proposition}\label{hodgeprop}
Let $\Omega\subset \R^d$ be a connected Lipschitz domain. $\vf\in \vL_2(\Omega)$ be such that $\nabla\cdot \vf \in \vL_2(\Omega)$, and let $g\in \vH^{-1/2}(\Gamma)$ satisfy $\langle g ,1 \rangle = 0$. Then one has the unique decomposition $\vf = \vw + \nabla p$, where $p\in H^1(\Omega)$, and $\vw \in \vL_2(\Omega)$ satisfies $\nabla\cdot \vw = 0$ with $\vw\cdot \vn = g $ on $\Gamma$. The function $p$ is uniquely determined up to a constant, and satisfies the bound
\begin{equation}\label{NeumannSolutionContinuity}
|p|_{H^1(\Omega)} = \|\nabla p\|_{L_2(\Omega)} \leq C\left(\|\nabla\cdot \vf\|_{L_2(\Omega)} + \| \vf\cdot\vn - g\|_{H^{-1/2}(\Gamma)}\right),
\end{equation}
where $C$ is some constant independent of $\vf$. When $g=0$, $\vw$ and $\nabla p$ are orthogonal in $\vL_2(\Omega)$.
\end{proposition}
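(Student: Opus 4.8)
The plan is to recast the existence of the decomposition as the solvability of a Neumann problem for the scalar potential $p$, and then to recover $\vw$ as the remainder. If $\vf = \vw + \nabla p$ with $\nabla\cdot\vw = 0$ and $\vw\cdot\vn = g$, then formally $p$ must satisfy $\Delta p = \nabla\cdot\vf$ in $\Omega$ together with the Neumann condition $\partial_\vn p = \vf\cdot\vn - g$ on $\Gamma$. Since $\nabla\cdot\vf\in L_2(\Omega)$, the field $\vf$ lies in $\vH(\mathrm{div};\Omega)$, so its normal trace $\vf\cdot\vn$ is a well-defined element of $H^{-1/2}(\Gamma)$ and the generalized divergence theorem gives $\langle\vf\cdot\vn,1\rangle = \int_\Omega \nabla\cdot\vf$. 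Combined with the hypothesis $\langle g,1\rangle = 0$, this shows the compatibility condition $\int_\Omega \nabla\cdot\vf = \langle \vf\cdot\vn - g, 1\rangle$ holds, which is exactly what is needed for the Neumann problem to be solvable.

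Next I would pose the weak form: find $p$ in the quotient $H^1(\Omega)/\R$ such that
\begin{equation*}
\int_\Omega \nabla p\cdot\nabla\phi\,dx = -\int_\Omega (\nabla\cdot\vf)\,\phi\,dx + \langle \vf\cdot\vn - g,\phi\rangle \quad\text{for all } \phi\in H^1(\Omega).
\end{equation*}
The compatibility computation above guarantees the right-hand side annihilates constants, so the functional is well defined on $H^1(\Omega)/\R$; its boundedness follows from Cauchy--Schwarz, the duality pairing between $H^{-1/2}(\Gamma)$ and $H^{1/2}(\Gamma)$, and the trace estimate $\|\phi\|_{H^{1/2}(\Gamma)}\le C\|\phi\|_{H^1(\Omega)}$. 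Because $\Omega$ is connected and Lipschitz, the Poincar\'e--Wirtinger inequality makes the Dirichlet form $\int_\Omega \nabla p\cdot\nabla\phi\,dx$ coercive on $H^1(\Omega)/\R$, so Lax--Milgram yields a unique solution $p$ modulo constants. Testing with $\phi = p$ (with a mean-zero representative) and invoking coercivity converts the bound on the functional into the stated estimate \eqref{NeumannSolutionContinuity}.

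Finally I would set $\vw := \vf - \nabla p$ and verify the claimed properties. Testing the weak form against $\phi\in C_c^\infty(\Omega)$ shows $\Delta p = \nabla\cdot\vf$ as distributions, whence $\nabla\cdot\vw = 0$; in particular $\vw\in\vH(\mathrm{div};\Omega)$ and its normal trace is meaningful. To recover the boundary condition I would apply the Green's formula for $\vH(\mathrm{div};\Omega)$ to both $\vw$ and $\vf$ and subtract the weak identity, which collapses to $\langle \vw\cdot\vn,\phi\rangle = \langle g,\phi\rangle$ for every $\phi\in H^1(\Omega)$; since the trace operator maps $H^1(\Omega)$ onto $H^{1/2}(\Gamma)$, this forces $\vw\cdot\vn = g$ in $H^{-1/2}(\Gamma)$. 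I expect this trace bookkeeping---carefully justifying the generalized Green's identity for $\vH(\mathrm{div})$ fields on a merely Lipschitz boundary and keeping track of the duality pairings---to be the main technical obstacle, the remainder being standard elliptic machinery. Uniqueness then follows by subtracting two admissible decompositions: the difference of potentials solves the homogeneous Neumann problem and is therefore constant on the connected domain, forcing the $\vw$-parts to agree. For the orthogonality claim when $g=0$, I would use the same Green's formula to write $(\vw,\nabla p)_{\vL_2(\Omega)} = -\int_\Omega (\nabla\cdot\vw)\,p\,dx + \langle\vw\cdot\vn,p\rangle = 0$, since both $\nabla\cdot\vw$ and $\vw\cdot\vn$ vanish.
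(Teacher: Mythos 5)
Your proposal is correct and follows essentially the same route as the paper: both recast the problem as the weak Neumann problem $(\nabla p,\nabla v) = (-\nabla\cdot\vf,v) + \langle \vf\cdot\vn - g, v\rangle$ for all $v\in H^1(\Omega)$, using the $\vH(\mathrm{div};\Omega)$ normal-trace theory to make sense of $\vf\cdot\vn\in H^{-1/2}(\Gamma)$, solve by Lax--Milgram, and set $\vw := \vf - \nabla p$. You simply supply more of the details (compatibility, coercivity on the quotient, trace bookkeeping, orthogonality) that the paper delegates to Girault--Raviart.
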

\begin{proof}
Since the divergence of $\vf$ is in $L_2(\Omega)$, $\vf$ has a well-defined normal boundary component $\vf\cdot\vn\in H^{-1/2}(\Gamma)$ satisfying Green's formula (see \cite[Theorem 2.5]{GiraultRaviart1986}). Thus we can consider the following weak Neumann problem
\[(\nabla p,\nabla v) = (-\nabla\cdot \vf, v) + \langle \vf\cdot \vn - g,v\rangle\quad\forall\,v\in H^1(\Omega).\]
Standard Lax-Milgram theory dictates that the solution $p$ is continuous with respect to the data, giving \eqref{NeumannSolutionContinuity} (see, for example,~\cite[Proposition 1.2]{GiraultRaviart1986}). The field $\vw:= \vf - \nabla p$ has the other properties listed above. 
\end{proof}
%

\noindent An important by-product of this decomposition in the case $g = 0$ is the \emph{Leray projector} $P_L$ and its orthogonal complement $P_L^\perp$, defined by $P_L\vf := \vw$ and $P_L^{\perp}\vf: = \nabla p$. 

The next decomposition splits a vector field into a divergence-free field and a gradient field normal to the boundary. Note that $\nabla p$ is normal to the boundary if and only if $p|_\Gamma$ is constant on each of the connected components of $\Gamma$, which we denote by $\Gamma_0,\Gamma_1,\ldots,\Gamma_K$. The following is from Corollary $5'$ in \cite[pg 224]{DautrayLion_vol3}. 
\begin{proposition}\label{hodgeprop2}
Every $\vf\in \vL_2(\Omega)$ admits the unique orthogonal decomposition $\vf = \vw + \nabla p$, where $p\in H^1_c(\Omega) = \{v\in H^1(\Omega),\,v|_{\Gamma_i}=\mbox{constant},\,i=0,\ldots,K\}$. The vector field $\vw$ is divergence-free and perpendicular to $\nabla p$ in $\vL_2(\Omega)$. 
\end{proposition}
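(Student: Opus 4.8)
The plan is to realize the decomposition as the splitting of $\vL_2(\Omega)$ into a closed subspace and its orthogonal complement, and then to read off the two stated properties directly from that splitting. I would set $G := \{\nabla p : p \in H^1_c(\Omega)\} \subseteq \vL_2(\Omega)$ and first establish that $G$ is a closed subspace. Granting this, the Hilbert-space projection theorem immediately yields the unique orthogonal decomposition $\vf = \vw + \nabla p$ in which $\nabla p \in G$ is the orthogonal projection of $\vf$ onto $G$ and $\vw := \vf - \nabla p \in G^\perp$. Orthogonality of $\vw$ and $\nabla p$ is then automatic, so the only remaining work is to verify that every element of $G^\perp$ is divergence-free.

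For the closedness of $G$, the key estimate is the Poincar\'e--Wirtinger inequality: for any $p \in H^1(\Omega)$ with zero mean one has $\|p\|_{L_2(\Omega)} \le C\|\nabla p\|_{\vL_2(\Omega)}$. Given a sequence $p_n \in H^1_c(\Omega)$ with $\nabla p_n$ Cauchy in $\vL_2(\Omega)$, I would normalize each $p_n$ to have zero mean (which leaves $\nabla p_n$ unchanged), so that Poincar\'e--Wirtinger forces $p_n$ to be Cauchy, hence convergent, in $H^1(\Omega)$. Its limit $p$ lies in $H^1(\Omega)$, and since the trace operator is continuous on a Lipschitz domain, the constraints $p_n|_{\Gamma_i}=\text{const}$ persist in the limit, placing $p \in H^1_c(\Omega)$ and giving $\nabla p_n \to \nabla p \in G$. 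Showing that the constant-trace constraints survive this limit and that $\nabla$ thus has closed range on $H^1_c(\Omega)$ is where the genuine analytic content lies, and I expect it to be the main obstacle.

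To characterize $G^\perp$, suppose $\vw \in G^\perp$, i.e.\ $(\vw, \nabla p) = 0$ for all $p \in H^1_c(\Omega)$. Since $C_c^\infty(\Omega) \subseteq H^1_c(\Omega)$ (compactly supported functions have vanishing, hence constant, trace on each $\Gamma_i$), testing against these yields $\nabla\cdot\vw = 0$ in the distributional sense; in particular $\nabla\cdot\vw \in L_2(\Omega)$, so $\vw$ has a well-defined normal trace $\vw\cdot\vn \in H^{-1/2}(\Gamma)$ obeying Green's formula \cite[Theorem 2.5]{GiraultRaviart1986}. Green's formula reduces the condition $(\vw,\nabla p)=0$ to $\sum_{i=0}^K p|_{\Gamma_i}\langle \vw\cdot\vn, 1\rangle = 0$, and letting the boundary constants vary independently shows that $G^\perp$ is precisely the space of divergence-free fields whose flux through each $\Gamma_i$ vanishes. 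This confirms that the component $\vw$ produced by the projection is divergence-free, completing the decomposition. Finally, since $\Omega$ is connected, $\nabla p = \nabla q$ forces $p - q$ to be constant, so while $\nabla p$ and $\vw$ are uniquely determined, $p$ itself is determined only up to an additive constant, in agreement with the statement.
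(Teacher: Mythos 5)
Your proof is correct, and it is worth noting that the paper itself does not prove this proposition at all: it simply quotes Corollary $5'$ of Dautray--Lions \cite[pg 224]{DautrayLion_vol3}, so you have supplied the argument the paper outsources. Your route is the standard one underlying that reference: identify $G=\nabla H^1_c(\Omega)$ as a closed subspace of $\vL_2(\Omega)$ (Poincar\'e--Wirtinger after a zero-mean normalization, which is compatible with the constant-trace constraint, plus continuity of the trace and closedness of the constants in $L_2(\Gamma_i)$), invoke the projection theorem, and characterize $G^\perp$ by first testing against $C_c^\infty(\Omega)$ to get $\nabla\cdot\vw=0$ and then using the $H(\mathrm{div})$ Green's formula to convert the remaining conditions into vanishing fluxes $\langle\vw\cdot\vn,1\rangle_{\Gamma_i}=0$. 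All the steps you flag as the ``genuine analytic content'' do go through on the domains the paper considers (connected, Lipschitz or better), and the independent variation of the boundary constants is legitimate since the $\Gamma_i$ are disjoint compact sets, so one can build $H^1_c$ functions equal to $1$ near one component and $0$ near the others. What your self-contained argument buys over the paper's citation is an explicit description of the orthogonal complement---divergence-free fields with zero flux through each connected boundary component---which is slightly more information than the proposition states and is in fact the form of the result used elsewhere (e.g.\ in Proposition \ref{boundedvelpot}).
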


\subsubsection{Potential Functions and Extensions}
In what follows we require $\vw$ (the divergence-free term of $\vf$) to be expressed as $\vw=\vcurl(\vpsi)$ in the case of $d=3$ dimensions (or $\vw = \vcurl(\psi)$ when $d=2$).\footnote{Since our results will hold in two and three dimensions, throughout the remainder of the paper we will concentrate specifically on more complicated the $d=3$ case to avoid constantly distinguishing between these two cases.} We will also need a well-defined continuous assigment $\vw\rightarrow \vpsi$. This requires some mild assumptions on $\Omega$ in the event that $\Omega$ is multiply connected. Specifically, we assume that $\Omega$ can be made simply connected by a series of non-intersecting ``cuts'' $\Sigma_1,\ldots,\Sigma_n$, where $\Sigma_j\subset \Omega$ is a smooth variety (see for example \cite[pg. 217]{DautrayLion_vol3}). On such an $\Omega$, we have the following: 
\begin{proposition}\label{boundedvelpot}
A given $\vw\in \vL_2(\Omega)$ is an element of $\vcurl(\vH^1(\Omega))$ if and only if $\vw$ satisfies $\nabla\cdot \vw = 0$ and $\int_{\Gamma_i} \vw\cdot \vn\,d\Gamma = 0$ for all $i=0\ldots K$. Of all possible potential functions, there is a unique $\vpsi\in \vH^1(\Omega)$ such that $\vw = \vcurl(\vpsi)$ satisfying 
\begin{equation}\label{cutconditions}
\nabla\cdot \vpsi = 0,\quad \vpsi\cdot\vn = 0,\quad \langle \vpsi\cdot\vn,1\rangle_{\Sigma_i} = 0,\,\, i=1,\ldots,n.
\end{equation}
Finally, we have the bound $\|\vpsi\|_{\vH^1(\Omega)}\leq C\|\vw\|_{\vL_2(\Omega)}$ for some $C$ independent of $\vw$. 
\end{proposition}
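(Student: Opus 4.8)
The plan is to prove the two implications separately, then settle uniqueness and the bound. For necessity, suppose $\vw = \vcurl(\vpsi)$ with $\vpsi\in\vH^1(\Omega)$. The distributional identity $\nabla\cdot\vcurl \equiv 0$ gives $\nabla\cdot\vw = 0$ at once. For the flux conditions, each $\Gamma_i$ is a closed surface without boundary, so Stokes' theorem yields $\int_{\Gamma_i}\vw\cdot\vn\,d\Gamma = \int_{\Gamma_i}\vcurl(\vpsi)\cdot\vn\,d\Gamma = 0$; one makes this rigorous for $\vH^1$ fields by approximating $\vpsi$ with smooth fields and passing to the limit in the (bounded) trace.

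For sufficiency, assume $\nabla\cdot\vw=0$ and that all fluxes vanish. First I would build a global vector potential and then correct its gauge. Extend $\vw$ to a compactly supported, divergence-free field $\tvf\in\vL_2(\R^d)$ on a ball $B\supset\overline{\Omega}$; the vanishing of each flux $\int_{\Gamma_i}\vw\cdot\vn$ is precisely the compatibility needed to solve the auxiliary divergence problems in the collar $B\setminus\overline{\Omega}$ that render the extension divergence-free, with $\|\tvf\|_{\vL_2(\R^d)}\le C\|\vw\|_{\vL_2(\Omega)}$. Let $\vU$ solve $-\Delta\vU = \tvf$ componentwise via the Newtonian potential, and set $\vpsi_0 := \vcurl(\vU)$. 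Since $\nabla\cdot\vU$ is harmonic and decays it vanishes by Liouville's theorem, whence $\vcurl(\vpsi_0) = -\Delta\vU = \tvf$; elliptic estimates give $\vpsi_0\in\vH^1$ with $\|\vpsi_0\|_{\vH^1}\le C\|\vw\|_{\vL_2(\Omega)}$. Restricting to $\Omega$ produces $\vcurl(\vpsi_0)=\vw$ and $\nabla\cdot\vpsi_0 = \nabla\cdot\vcurl(\vU)=0$.

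Next I would enforce the normalizations in \eqref{cutconditions} by adding curl-free corrections, which leave the curl unchanged. Solving the Neumann problem $\Delta\phi = 0$ in $\Omega$ with $\partial_\vn\phi = -\vpsi_0\cdot\vn$ on $\Gamma$---compatible because $\int_{\Gamma_i}\vpsi_0\cdot\vn = 0$, again by Stokes on the closed $\Gamma_i$---and replacing $\vpsi_0$ by $\vpsi_1 := \vpsi_0 + \nabla\phi$ preserves $\vcurl(\vpsi_1)=\vw$ and $\nabla\cdot\vpsi_1 = 0$ while achieving $\vpsi_1\cdot\vn = 0$. Finally, to satisfy the cut conditions I would add an element $\vh_c$ of the finite-dimensional space of harmonic fields $\{\vh:\vcurl\vh=0,\ \nabla\cdot\vh=0,\ \vh\cdot\vn=0\}$; the non-degeneracy of the cut-flux functionals $\vh\mapsto\langle\vh\cdot\vn,1\rangle_{\Sigma_i}$ on this space (a standard fact for a domain rendered simply connected by the cuts $\Sigma_1,\ldots,\Sigma_n$) makes the correction both solvable and unique, yielding $\vpsi:=\vpsi_1+\vh_c$ satisfying all of \eqref{cutconditions}.

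For uniqueness, the difference $\vh$ of two admissible potentials satisfies $\vcurl\vh=0$, $\nabla\cdot\vh=0$, $\vh\cdot\vn=0$ and has zero cut fluxes; writing $\vh=\nabla\chi$ on the simply connected cut domain and integrating $\int_\Omega|\vh|^2$ by parts, the boundary term over $\Gamma$ vanishes since $\vh\cdot\vn=0$ and the jump terms across the $\Sigma_i$ vanish since the cut fluxes are zero, forcing $\vh\equiv 0$. The bound $\|\vpsi\|_{\vH^1(\Omega)}\le C\|\vw\|_{\vL_2(\Omega)}$ then follows by concatenating the continuity estimates of the extension operator, the Newtonian potential ($\vL_2\to\vH^2_{\mathrm{loc}}$), the Neumann solve, and the finite-dimensional correction. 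I expect the main obstacle to be the topological bookkeeping in the multiply connected case: constructing the divergence-free extension so that each individual flux condition is respected, and establishing the non-degeneracy of the cut-flux functionals that simultaneously delivers existence of $\vh_c$ and uniqueness of $\vpsi$. These are exactly the points where the hypotheses $\int_{\Gamma_i}\vw\cdot\vn = 0$ and the cut structure of $\Omega$ are indispensable, and where I would lean on the potential-theoretic machinery in \cite{DautrayLion_vol3}.
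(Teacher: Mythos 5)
Your proposal is correct in outline, but it takes a genuinely different route from the paper. The paper's proof is almost entirely a citation: the characterization of $\vcurl(\vH^1(\Omega))$ by the divergence and flux conditions, and the existence of a unique potential in the gauge \eqref{cutconditions}, are taken directly from Corollary~4 and Remark~4 of Dautray--Lions, and the bound $\|\vpsi\|_{\vH^1(\Omega)}\leq C\|\vw\|_{\vL_2(\Omega)}$ is then obtained abstractly by observing that $\vcurl(\vH^1(\Omega))$ is closed in $\vL_2(\Omega)$ and $V\cap\vH^1(\Omega)$ is closed in $\vH^1(\Omega)$, so the assignment $T\vw:=\vpsi$ is a closed map between Banach spaces and hence continuous by the closed graph theorem. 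You instead construct $\vpsi$ explicitly (divergence-free compactly supported extension, Newtonian potential, $\vpsi_0=\vcurl(\vU)$, a Neumann gauge correction, a harmonic-field correction for the cut fluxes) and obtain the constant by concatenating the continuity estimate of each step. Your approach buys an explicit potential and a traceable constant; the paper's buys brevity and weaker regularity demands. Three points in your sketch deserve to be made explicit if you carry it out: (i) the Neumann correction needs $\phi\in H^2(\Omega)$, not merely $H^1(\Omega)$, so that $\nabla\phi\in\vH^1(\Omega)$ --- this uses elliptic regularity and the $\calC^{m,1}$ boundary assumption, whereas the closed-graph argument does not; (ii) the Liouville step for $\nabla\cdot\vU$ as written is a $d=3$ argument (in $d=2$ the Newtonian potential only gives boundedness, though a constant still has zero gradient, so the conclusion survives); and (iii) the nondegeneracy of the cut-flux functionals on the finite-dimensional harmonic-field space, which you use both for solvability of the correction and for uniqueness, is precisely the content of the Dautray--Lions machinery the paper leans on, so your argument localizes rather than eliminates that reference.
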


\begin{proof}
The first claim is Corollary 4 from \cite[pg. 224]{DautrayLion_vol3}, and the unique assignment follows from Remark 4 proceeding the corollary. For continuity, note that $\vcurl(\vH^1(\Omega))$ endowed with the $\vL_2(\Omega)$ norm is closed \cite[pg. 222, Proposition 3]{DautrayLion_vol3}. Now let $V$ denote the subspace of fields $\vpsi\in\vL_2(\Omega)$ satisfying \eqref{cutconditions}. By \cite[pg. 225, Proposition 4]{DautrayLion_vol3}, $V$ is closed in $\vL_2(\Omega)$, so $V\cap \vH^1(\Omega)$ is closed in $\vH^1(\Omega)$. Using this one can show that the operator $T:\vcurl(\vH^1(\Omega))\rightarrow V\cap \vH^1(\Omega)$ given by $T\vw:=\vpsi$ is a closed map, and therefore continuous.
\end{proof}

This leads to potential functions for our decompositions that satisfy the following regularity result. 

\begin{proposition}\label{hodgeprop_potentials}
Let $\tau$ be such that $0\leq \tau \leq m$ and let $\vf\in \vH^\tau(\Omega)$. Then the decompositions in Propositions \ref{hodgeprop} and \ref{hodgeprop2} can be written as $\vf = \vcurl(\vpsi) + \nabla p,$
for uniquely determined potentials $p\in H^{\tau+1}(\Omega)$ and $\vpsi\in \vH^{\tau+1}(\Omega)$. For the decomposition in Proposition \ref{hodgeprop} with $g\in H^{\tau - 1/2}(\Gamma)$ satisfying $\langle g,1\rangle_{\Gamma_i}=0$ on each connected component of $\Gamma$, these potentials satisfy
\begin{equation}\label{potentialregularity}
\|p\|_{H^{\tau+1}(\Omega)} \leq C (\|\vf\|_{\vH^\tau(\Omega)} + \|g\|_{H^{\tau-1/2}(\Gamma)}),\quad \|\vpsi\|_{\vH^{\tau+1}}\leq C (\|\vf\|_{\vH^\tau(\Omega)} + \|g\|_{H^{\tau-1/2}(\Gamma)}),
\end{equation}
Similar bounds (with $g=0$) hold for the decomposition in Proposition \ref{hodgeprop2}.
\end{proposition}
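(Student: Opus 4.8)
The plan is to treat the two potentials separately: obtain the regularity of $p$ from elliptic regularity for the boundary value problem it solves, and then deduce the regularity of $\vpsi$ from the norm equivalence \eqref{equivsobnorm_normal} together with the gauge conditions of Proposition \ref{boundedvelpot}. I begin with the decomposition of Proposition \ref{hodgeprop}.

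First I would identify the strong form of the weak Neumann problem defining $p$. Integrating the weak formulation in the proof of Proposition \ref{hodgeprop} by parts (Green's formula) shows that $p$ is, up to an additive constant, the solution of $\Delta p = \nabla\cdot\vf$ in $\Omega$ with Neumann data $\partial p/\partial\vn = \vf\cdot\vn - g$ on $\Gamma$; I would normalize by $\int_\Omega p = 0$ so that the full-norm bound on $H^{\tau+1}(\Omega)/\R$ is realized. Since $\vf\in\vH^\tau(\Omega)$ we have $\nabla\cdot\vf\in H^{\tau-1}(\Omega)$ with $\|\nabla\cdot\vf\|_{H^{\tau-1}(\Omega)}\le\|\vf\|_{\vH^\tau(\Omega)}$, and the normal trace satisfies $\|\vf\cdot\vn\|_{H^{\tau-1/2}(\Gamma)}\le C\|\vf\|_{\vH^\tau(\Omega)}$ by the trace theorem; together with $g\in H^{\tau-1/2}(\Gamma)$ this places the data in the spaces required by the Neumann regularity theory. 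For integer $\tau$ with $0\le\tau\le m$, elliptic regularity for the Neumann problem on a $\calC^{m,1}$ domain then yields $p\in H^{\tau+1}(\Omega)$ and the first bound in \eqref{potentialregularity}. The fractional values of $\tau$ would be filled in by interpolating the linear solution operator $(\vf,g)\mapsto p$ between consecutive integers, using standard operator interpolation on the domain spaces and \eqref{boundaryinterpolationspace} on the boundary spaces.

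Next I would bound $\vpsi$. Setting $\vw:=\vf-\nabla p$, the regularity just obtained gives $\vw\in\vH^\tau(\Omega)$ with $\|\vw\|_{\vH^\tau(\Omega)}\le\|\vf\|_{\vH^\tau(\Omega)}+\|p\|_{H^{\tau+1}(\Omega)}$, and $\vw$ is divergence-free. The flux condition $\int_{\Gamma_i}\vw\cdot\vn\,d\Gamma=\langle g,1\rangle_{\Gamma_i}=0$ required by Proposition \ref{boundedvelpot} is exactly the hypothesis imposed on $g$, so that proposition produces the unique $\vpsi\in\vH^1(\Omega)$ with $\vcurl(\vpsi)=\vw$ obeying \eqref{cutconditions}, in particular $\nabla\cdot\vpsi=0$ and $\vpsi\cdot\vn=0$. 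Applying the norm equivalence \eqref{equivsobnorm_normal} with $s=\tau+1$ to $\vpsi$, the divergence and normal-trace terms vanish by the gauge conditions, the curl term equals $\|\vw\|_{\vH^\tau(\Omega)}$, and the $\vL_2$ term is controlled by $\|\vpsi\|_{\vL_2(\Omega)}\le C\|\vw\|_{\vL_2(\Omega)}$ from Proposition \ref{boundedvelpot}. Hence $\|\vpsi\|_{\vH^{\tau+1}(\Omega)}\le C\vertiii{\vpsi}_\vn\le C\|\vw\|_{\vH^\tau(\Omega)}$, which combined with the $p$ bound gives the second estimate in \eqref{potentialregularity}. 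Because \eqref{equivsobnorm_normal} already holds for fractional $s$, no separate interpolation is needed here.

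For the decomposition of Proposition \ref{hodgeprop2} the argument is parallel with $g$ effectively zero. Testing the orthogonality against $\phi\in H^1_c(\Omega)$ shows $\vw$ is divergence-free with $\int_{\Gamma_i}\vw\cdot\vn\,d\Gamma=0$ on each component, so Proposition \ref{boundedvelpot} applies verbatim and the $\vpsi$ estimate follows as above; the potential $p$ now solves $\Delta p=\nabla\cdot\vf$ subject to $p|_{\Gamma_i}=\text{constant}$ together with the flux constraints that fix those constants, again a well-posed elliptic problem whose regularity theory gives $\|p\|_{H^{\tau+1}(\Omega)}\le C\|\vf\|_{\vH^\tau(\Omega)}$. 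I expect the main obstacle to be securing elliptic regularity uniformly across the whole range $0\le\tau\le m$ on a boundary of only finite smoothness $\calC^{m,1}$: matching the classical integer-order statements to the fractional scale by interpolation, and interpreting the normal trace $\vf\cdot\vn$ in $H^{\tau-1/2}(\Gamma)$ for the low-regularity range $\tau\le 1/2$, where it must be understood through $\nabla\cdot\vf$ rather than as a genuine trace.
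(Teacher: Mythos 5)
Your argument is correct in substance but routes the proof differently from the paper, and it is worth recording the contrast. The paper obtains existence, uniqueness, and the $H^{\tau+1}$ regularity of the potentials in the $g=0$ case by citing structural results from Dautray--Lions (Corollaries 5, $5'$, and 7) and then extracts the bounds \eqref{potentialregularity} abstractly, via closedness of the constrained subspaces and the closed graph theorem, without ever writing down the problem the potentials solve; the case $g\neq 0$ is then reduced to $g=0$ by splitting off a harmonic correction $p_g$ (solving $\Delta p_g=0$, $\partial p_g/\partial n=-g$) with its own vector potential $\vpsi_g$, and only these correction terms are estimated explicitly. You instead work quantitatively from the start: you identify $p$ directly as the solution of the Neumann problem with data $\nabla\cdot\vf$ and $\vf\cdot\vn-g$, invoke elliptic regularity once for the whole range of data, and then bound $\vpsi$ by applying Proposition \ref{boundedvelpot} to $\vw=\vf-\nabla p$ and feeding the gauge conditions \eqref{cutconditions} into the norm equivalence \eqref{equivsobnorm_normal} with $s=\tau+1$. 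That last step is exactly the trick the paper uses for $\vpsi_g$; you simply apply it to all of $\vpsi$, which makes the harmonic correction unnecessary and keeps the constants traceable. What the paper's route buys is that it never has to justify an elliptic shift theorem with right-hand side in $H^{\tau-1}(\Omega)$ across the full range $0\le\tau\le m$ (including the awkward range $\tau\le 1/2$, where $\vf\cdot\vn$ is not a genuine trace and the Neumann problem must be read variationally); you correctly flag this as the main technical debt of your approach, and interpolating the solution operator between integer levels, as you propose, does discharge it. Your treatment of the Proposition \ref{hodgeprop2} scalar potential (Dirichlet-type conditions $p|_{\Gamma_i}=\text{const}$ plus flux constraints) is the least detailed part, but it is no less rigorous than the paper's, which simply cites the same Dautray--Lions corollaries there. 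Both routes need operator interpolation for fractional $\tau$ and both implicitly use the ``reverse'' direction of \eqref{equivsobnorm_normal} (membership of $\vpsi$ in $\vH^{\tau+1}(\Omega)$ is a conclusion, not a hypothesis), so neither has an advantage on that point.
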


\begin{proof}
Let $\tau$ be a nonnegative integer. In the case of Proposition \ref{hodgeprop}, with $g=0$, existence and uniqueness of $\vpsi$ follows from \cite[page 224, Corollary 5]{DautrayLion_vol3} and the proceeding remarks. The Proposition \ref{hodgeprop2} case follows from \cite[page 224, Corollary $5'$]{DautrayLion_vol3}. The additional regularity of the boundary gives regularity of these potentials (see, for example \cite[page 236, Corollary 7]{DautrayLion_vol3}). Recall that $V$ denotes the subspace of fields $\vpsi\in\vL_2(\Omega)$ satisfying \eqref{cutconditions}, and $V$ is closed in $\vL_2(\Omega)$, so $V\cap \vH^{\tau+1}(\Omega)$ is closed in $\vH^{\tau+1}(\Omega)$. From this one can show that the assignment $\vf\rightarrow \vpsi$ is a well-defined closed map, and thus obtain the bound for $\vpsi$ in \eqref{potentialregularity}. The scalar potential $p$ is unique if we require $\int_\Omega p\, dx = 0$. In a similar fashion as above, the bound for $p$ follows from the fact that the space $H^{\tau+1}(\Omega)\cap \{p\in L_2(\Omega)\,|\,\int_\Omega p\, dx = 0\}$ is closed in $H^{\tau+1}(\Omega)$. The fractional cases can be handled using standard interpolation arguments. 

To handle the case $g\neq 0$ from Proposition \ref{hodgeprop}, let $p_g$ be the solution of the problem
\[-\Delta p_g = 0\quad\mbox{in }\Omega,\quad \frac{\partial p_g}{\partial n} = -g\quad\mbox{on }\Gamma,\]
Note that that $\vw_g:=-\nabla p_g$ is divergence free. Since $\vw_g$ is divergence-free and $\vw_g\cdot \vn = g$ satisfies the conditions in Proposition \ref{boundedvelpot}, $\vw_g = \vcurl(\vpsi_g)$ for a unique $\vpsi_g$. Letting $\vf = \vcurl(\vpsi_0) + \nabla(p_0)$ denote the decomposition of $\vf$ from Proposition \ref{hodgeprop} with $g=0$, where the potentials are the unique potentials from above satisfying \eqref{potentialregularity} with $g=0$, the desired potentials are given by $\vpsi:=\vpsi_0 + \vpsi_g$ and $p:=p_0+p_g$.

The bound \eqref{potentialregularity} will follow from bounding $\vpsi_g$ and $p_g$. Since $g\in H^{\tau - 1/2}(\Gamma)$ and the domain is assumed smooth enough, we get the regularity bound \cite[Theorem 1.10]{GiraultRaviart1986} 
\begin{equation*}
\|\vw_g\|_{\vH^{\tau}(\Omega)} = \|\nabla p_g\|_{\vH^{\tau}(\Omega)} \leq \|p_g\|_{H^{\tau+1}(\Omega)} \leq C\|g\|_{H^{\tau-1/2}(\Gamma)}.
\end{equation*}
Using this with Proposition \ref{boundedvelpot}, $\vpsi_g$ satisfies the bound $\|\vpsi_g\|_{\vH^1(\Omega)}\leq C \|\vw_g\|_{\vL_2(\Omega)} \leq C\|g\|_{H^{-1/2}(\Gamma)}$. For higher regularity, we use \eqref{equivsobnorm_normal} with $s=\tau + 1$ to finish the proof:
\[ \|\vpsi_g\|_{\vH^{\tau + 1}(\Omega)}^2 \sim \vertiii{\vpsi_g}^2_{\vn} \leq C\left(\|\vw_g\|_{\vH^{1}(\Omega)}^2 + \|\vw_g\|_{\vH^{\tau}(\Omega)}^2\right) \leq C\|\vw_g\|_{\vH^{\tau}(\Omega)}^2 \leq C \|g\|_{H^{\tau-1/2}(\Gamma)}^2.\]
\end{proof}

\noindent We remark that the existence of these potentials is only used for theoretical purposes. The choice of cuts and the conditions \eqref{cutconditions} plays no role in implementing the kernel-based decomposition presented later. However, potential functions for each term in the kernel decomposition will be readily available.

Next we use these potentials to define an extension operator, which will be useful later. 

\begin{lemma}\label{lerayextensions}
Let $g\in H^{\tau - 1/2}(\Gamma)$ satisfy $\langle g,1\rangle_{\Gamma_i}=0$ on each connected component of $\Gamma$, and let $\vf = \vw + \nabla p$ denote the corresponding vector decomposition from Proposition \ref{hodgeprop}. Given $\Omega\subset\R^d$ satisfying the assumptions preceeding Proposition \ref{hodgeprop_potentials}, there exists an extension operator $E:\vH^\tau(\Omega)\rightarrow \vtH^\tau(\R^d)$, for all $\tau$ satisfying $0\leq \tau \leq m$, such that 
\begin{equation}\label{decompextension}
E\vf|_{\Omega} = \vf,\quad  P_{div}E\vf|_{\Omega} = \vw\quad \mbox {and  }P_{curl}E\vf|_{\Omega} = \nabla p,
\end{equation}
and is continuous in the sense that $\|E\vf\|_{ \vtH^\tau(\R^d)} \leq C\left(\|\vf\|_{\vH^\tau(\Omega)} + \|g\|_{H^{\tau - 1/2}(\Gamma)}\right).$
\end{lemma}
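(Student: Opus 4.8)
The plan is to build $E$ not by extending $\vf$ directly, but by extending its \emph{potentials}, so that the global Fourier projections of \eqref{eq;projection_operators} automatically detect the divergence-free and curl-free pieces. First I would invoke Proposition \ref{hodgeprop_potentials} to write $\vf = \vcurl(\vpsi) + \nabla p$ on $\Omega$ with $\vpsi\in\vH^{\tau+1}(\Omega)$ and $p\in H^{\tau+1}(\Omega)$, together with the regularity bounds \eqref{potentialregularity} (the hypothesis $\langle g,1\rangle_{\Gamma_i}=0$ is exactly what makes this vector potential available). Next I would apply a universal Stein-type Sobolev extension operator on the $\calC^{m,1}$ domain $\Omega$ to obtain $\widetilde{\vpsi}\in\vH^{\tau+1}(\R^d)$ and $\widetilde{p}\in H^{\tau+1}(\R^d)$ with $\widetilde{\vpsi}|_\Omega=\vpsi$, $\widetilde{p}|_\Omega=p$, and norms controlled by those of $\vpsi$ and $p$; since a single such operator works simultaneously for all Sobolev orders, this handles the whole range $0\leq\tau\leq m$ at once. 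I then define $E\vf := \vcurl(\widetilde{\vpsi}) + \nabla\widetilde{p}$.

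Because curl and gradient are local differential operators and the extensions agree with the potentials on $\Omega$, one gets $E\vf|_\Omega = \vcurl(\vpsi) + \nabla p = \vf$, which is the first identity in \eqref{decompextension}. The crucial structural observation is that $\vcurl(\widetilde{\vpsi})$ is divergence-free and $\nabla\widetilde{p}$ is curl-free on \emph{all} of $\R^d$: in Fourier variables $\widehat{\vcurl(\widetilde{\vpsi})}=i\omega\times\widehat{\widetilde{\vpsi}}$ is pointwise orthogonal to $\omega$, while $\widehat{\nabla\widetilde{p}}=i\omega\,\widehat{\widetilde{p}}$ is parallel to $\omega$. Hence the symbol $I-\omega\omega^T/|\omega|^2$ of $P_{div}$ fixes the first term and annihilates the second, giving $P_{div}E\vf = \vcurl(\widetilde{\vpsi})$ and $P_{curl}E\vf = \nabla\widetilde{p}$; restricting to $\Omega$ and using locality once more yields $P_{div}E\vf|_\Omega=\vw$ and $P_{curl}E\vf|_\Omega=\nabla p$, completing \eqref{decompextension}.

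The step I expect to be the main obstacle---and the reason the construction must route through the potentials rather than through a direct extension of $\vf$---is membership in $\vtH^\tau(\R^d)$, whose inner product \eqref{Htildeinnerprod} carries the singular weight $(1+|\omega|^2)^{\tau+1}/|\omega|^2$ near the origin. Here the gradient/curl structure is precisely what rescues the bound: from the orthogonality above, $|\widehat{E\vf}(\omega)|^2 = |\omega\times\widehat{\widetilde{\vpsi}}|^2 + |\omega|^2|\widehat{\widetilde{p}}|^2 \leq |\omega|^2(|\widehat{\widetilde{\vpsi}}|^2 + |\widehat{\widetilde{p}}|^2)$, so the differentiation supplies a factor $|\omega|^2$ that cancels the offending $|\omega|^{-2}$ and leaves an integrable expression even at the origin. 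This gives $\|E\vf\|_{\vtH^\tau(\R^d)}^2 \leq \|\widetilde{\vpsi}\|_{\vH^{\tau+1}(\R^d)}^2 + \|\widetilde{p}\|_{H^{\tau+1}(\R^d)}^2$. Chaining this with the extension-operator bounds and then with \eqref{potentialregularity} produces the claimed continuity estimate $\|E\vf\|_{\vtH^\tau(\R^d)}\leq C(\|\vf\|_{\vH^\tau(\Omega)} + \|g\|_{H^{\tau-1/2}(\Gamma)})$. Finally, linearity of $E$ follows because the potential assignment of Proposition \ref{hodgeprop_potentials}, the extension operator, and the operators $\vcurl$ and $\nabla$ are all linear.
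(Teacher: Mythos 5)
Your proposal is correct and follows essentially the same route as the paper: extend the potentials $\vpsi$ and $p$ from Proposition \ref{hodgeprop_potentials} via the universal Stein extension, set $E\vf := \vcurl(\cdot) + \nabla(\cdot)$ of the extensions, and use the fact that the curl/gradient structure supplies the factor of $|\omega|^2$ that cancels the singular weight in \eqref{Htildeinnerprod}. The only difference is that you spell out the Fourier-symbol justification of \eqref{decompextension} more explicitly than the paper does.
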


\begin{proof}
Let $p$ and $\vpsi$ denote the unique potentials for a given $\vf\in \vH^\tau(\Omega)$ in Proposition \ref{hodgeprop_potentials}. These can be extended using Stein's continuous extension $\mathfrak{E}:H^{\tau+1}(\Omega)\rightarrow H^{\tau+1}(\R^d)$, which we note is universal in the sense that $\mathfrak{E}$ does not depend on $\tau$ \cite[Chapter 4]{Stein1970}. We will interpret $\mathfrak{E}:\vH^{\tau+1}(\Omega)\rightarrow \vH^{\tau+1}(\R^d)$ as $\mathfrak{E}$ applied component-wise. We can then define the extension $E\vf := \vcurl(\mathfrak{E}\vpsi) + \nabla\mathfrak{E}p$, which satisfies \eqref{decompextension}. Lastly, \eqref{potentialregularity} gives us that $E$ is continuous: 
\begin{eqnarray*}
\|E\vf\|_{\vtH^\tau(\R^d)}^2& =& \int_{\R^d}\left(|\omega\times \widehat{\mathfrak{E}\vpsi}|^2 + |\omega\widehat{\mathfrak{E}p}|^2\right)\frac{(1 + |\omega|^2)^{\tau+1}}{|\omega|^2}\,d\omega \\
&\leq &\int_{\R^d}\left(|\widehat{\mathfrak{E}\vpsi}|^2 + |\widehat{\mathfrak{E}p}|^2\right)(1 + |\omega|^2)^{\tau+1}\,d\omega  = \|\mathfrak{E}\vpsi\|^2_{\vH^{\tau+1}(\R^d)} + \|\mathfrak{E}p\|^2_{H^{\tau+1}(\R^d)}\\
&\leq&  C\|\vpsi\|^2_{\vH^{\tau+1}(\Omega)} + C\|p\|^2_{H^{\tau+1}(\Omega)} \leq  C\left(\|\vf\|_{\vH^{\tau}(\Omega)} + \|g\|_{H^{\tau - 1/2}(\Gamma)}\right)^2.
\end{eqnarray*}
\end{proof}

\noindent These same arguments can be repeated to establish a continuous extension satisfying \eqref{decompextension} for the decomposition in Proposition \ref{hodgeprop2}.

\section{Radial Basis Functions and Related Kernels}\label{kernelprelims}


A kernel $\phi:\R^d\times\R^d\rightarrow \R$ is \emph{positive definite} if given any finite set of unique points $X = \{x_1,x_2,\ldots,x_N\}\subset \R^d$, the associated Gram matrix with entries $A_{ij} = \phi(x_i,x_j)$ is positive definite. The typical Ansatz for interpolation of function $f$ over the points $X$ with such a kernel is to find an interpolant of the form
\begin{equation}\label{pdinterpform}
s_f = \sum_{j=1}^N\phi(\cdot,x_j)c_j,
\end{equation}
where the coefficients $c_j$ are chosen so that $s_f\bigr|_X = f\bigr|_X$. Positive definiteness of the kernel ensures existence and uniqueness of the interpolant. If $\phi$ is radial in the sense that $\phi(x,y) = \varphi(|x - y|)$ for some univariate $\varphi$, then $\phi$ is a \emph{radial basis function} (RBF). It is common to simply write $\phi(x,y) = \phi(|x - y|)$. Good references on RBFs are, for example,~\cite{Buhmann_RBFbook,FasshauerMeshfreeBook,WendlandScatteredData}. 


For vector-valued approximations, there are matrix-valued kernels $\Phi:\R^d\times\R^d\rightarrow \R^d\times\R^d$.  Interpolants to a vector field $\vf:\R^d\rightarrow\R^d$ sampled at distinct points $X = \{x_1,x_2,\ldots,x_N\}\subset\R^d$ can be constructed from these kernels as follows:
\begin{equation}\label{generalMVkernelInterp}
\vs_\vf = \sum_{j=1}^N\Phi(\cdot,x_j)\vc_j,
\end{equation}
where the vector coefficients $\vc_j\in\R^d$ are chosen so that $\vs_\vf\bigr|_X = \vf\bigr|_X$.  This leads to the following $Nd\times Nd$ linear system of equations:
\begin{align}
\underbrace{
\begin{bmatrix}
\Phi(x_1,x_1) & \cdots & \Phi(x_1,x_N) \\
\vdots & \ddots & \vdots \\
\Phi(x_N,x_1) & \cdots & \Phi(x_N,x_N) 
\end{bmatrix}}_{\ds A}
\underbrace{
\begin{bmatrix}
\vc_1 \\
\vdots \\
\vc_N
\end{bmatrix}}_{\ds\vc}
=
\underbrace{
\begin{bmatrix}
\vf_1 \\
\vdots \\
\vf_N
\end{bmatrix}}_{\ds\vf}.
\label{eq:Phi_gram_matrix}
\end{align}
We say that $\Phi$ is positive definite if the Gram matrix $A$ in \eqref{eq:Phi_gram_matrix} is positive definite for any distinct set of points $X$. It will be useful later to express this property in a block-style quadratic form. Since $A$ is positive definite, we have
\begin{equation}\label{posdefquadform}
\sum_{j,k}\vc_k^T\Phi(x_k,x_j)\vc_j = \vc^T A \vc \geq 0, 
\end{equation}
with equality occurring if and only if $\vc_j = \vzero,\; j=1,\ldots,N$.

Customized matrix-valued kernels leading to divergence-free and curl-free approximations were introduced independently by several researchers in the 1990s: \cite{AmodeiBenbourhim_1991, Handscomb_1992_solenoidalTPS,NarcowichWard1994:GenHermite}. In all cases the construction of the customized kernel is fairly simple. For example, letting $\phi$ be an RBF on $\R^3$, we define
\begin{align}\label{customizedkerneldef}
\Phi_{div}(x,y)= \vcurl_x\,\vcurl_y\,\left(\phi(|x-y|)\vI\right)\mbox{\hspace{.2in}and\hspace{.2in}}\Phi_{curl}(x,y)= \nabla_x\nabla_y^T\left(\phi(|x-y|)\vI\right),
\end{align}
 where $\vI$ is the 3-by-3 identity matrix, the subscript in the differential operators indicate which argument they act on, and the curl of a matrix is interpretted as having the $\vcurl$ operator act on the matrix column-wise. Note that $\nabla_y\phi = -\nabla_x\phi$, so this simplifies to a form that readily generalizes to any $\R^d$:
\begin{align*}
\Phi_{div}(x,y):= (-\Delta \vI + \nabla\nabla^T)\phi(|x-y|)\mbox{\hspace{.2in}and\hspace{.2in}}\Phi_{curl}(x,y):= -\nabla\nabla^T\phi(|x-y|),
\end{align*}
where the differential operators act on $x$. It is easy to check that the second argument acts as a shift, e.g. $\Phi_{div}(x,y) = \Phi_{div}(x - y)$. If $\phi$ is positive definite, $\Phi_{div}$ and $\Phi_{curl}$ are both positive definite (see, for example \cite{Fuselier2008:StabilityNative,NarcowichWard1994:GenHermite}). Further, the kernel given by
\begin{equation}\label{fullkernel}
\Phi := \Phi_{div} + \Phi_{curl} = -\Delta\phi\vI
\end{equation}
is also positive definite because it is the sum of positive definite kernels. $\Phi$ decomposes naturally into its divergence-free and curl-free components. Indeed, given $x_j,\vc_j\in\R^d$, the identities\footnote{Here $\widehat{\phi}$ denotes the $d$-variate Fourier tranform of the single argument function $\phi(|\cdot|)$.}
\[\widehat{\Phi_{div}}(\omega) = \left(|\omega|^2\vI - \omega\omega^T\right)\widehat{\phi}(\omega)\quad\text{and}\quad \widehat{\Phi_{curl}}(\omega) = \left(\omega\omega^T\right)\widehat{\phi}(\omega)\]
imply that $P_{div}\Phi(\cdot,\vx_j)\vc_j = \Phi_{div}(\cdot,\vx_j)\vc_j$ and $P_{curl}\Phi(\cdot,\vx_j)\vc_j = \Phi_{curl}(\cdot,\vx_j)\vc_j$.

\subsection{The Native Space}

From here on out, we let $\Phi$ denote the matrix-valued kernel from \eqref{fullkernel}. Each positive definite matrix-valued kernel gives rise to a canonical reproducing kernel Hilbert space, commonly referred to as the \emph{native space} for that kernel. The native space for $\Phi$ is denoted by $\calN_\Phi(\R^d)$. A precise definition for $\calN_\Phi(\R^d)$ is not warranted here and we refer the interested reader to~\cite[Section 3]{Fuselier2008:StabilityNative}. $\Phi$ serves as a reproducing kernel in the sense that if $\vf$ is a vector field in $\calN_\Phi(\R^d)$ and $\vb\in \R^d$, then
\begin{equation}\label{repproperty}
(\vf,\Phi(\cdot,x)\vb)_{\calN_\Phi(\R^d)} = \vb^T\vf(x)\quad \forall\,x\in\R^d,
\end{equation}
where $(\cdot,\cdot)_{\calN_\Phi(\R^d)}$ denotes the inner product on $\calN_\Phi(\R^d)$. 

It can be shown that if $\phi\in C^2(\R^d)$ with $\Delta\phi\in L_1(\R^d)$, then the inner product in $\calN_\Phi(\R^d)$ is
\begin{equation}\label{generalNSinnerprod}
(\vf,\vg)_{\calN_\Phi(\R^d)} = \int_{\R^d}\frac{\overline{\widehat{\vf(\omega)}}^T\widehat{\vg}(\omega)}{|\omega|^2\widehat{\phi}(\omega)}\,d\omega,
\end{equation}
where $\widehat{\vf}$ is the Fourier tranform of $\vf$ and $\calN_\Phi(\R^d)\subset \vL_2(\R^d)$ is identified with all functions finite in the associated norm (see \cite[Section 3.1]{Fuselier2008:StabilityNative}). 
It immediately follows that if the RBF $\phi$ satisfies $\widehat{\phi}(\omega)\leq C(1 + |\omega|_2^2)^{-\tau-1}$ for some constant $C$, then $\calN_\Phi(\R^d)$ is continuously embedded in $\vtH^\tau(\R^d)$. If in addition 
\begin{equation}\label{algebraicdecay}
\widehat{\phi}(\omega)\sim(1 + |\omega|_2^2)^{-\tau - 1},
\end{equation}
then $\calN_\Phi(\R^d) = \vtH^\tau(\R^d)$ with equivalent norms.

\subsection{Generalized Interpolation}\label{generalizedinterp}

The reproducing kernel Hilbert space structure of the native space makes it possible to interpolate using a wide variety of continuous linear functionals. A concise treatment of this is given for scalar-valued RBFs in \cite[Chapter 16]{WendlandScatteredData}, and generalizes in a straightforward way to the matrix-valued case. We summarize the main results we need below.

Let $\Lambda \subset \calN_{\Phi}(\R^d)^*$ be a finite linearly independent collection of linear functionals, where $\calN_{\Phi}(\R^d)^*$ denotes the dual space to $\calN_{\Phi}(\R^d)$. Given the data $\{\lambda(\vf)\,|\,\lambda\in \Lambda\}$, where $\vf\in \calN_\Phi(\R^d)$, we look for a generalized interpolant to $\vf$ of the form
\[
\vs_\vf = \sum_{\lambda\in\Lambda}\vv_\lambda\alpha_\lambda,
\]
where $\alpha_\lambda\in \R$ and each $\vv_\lambda$ is the Riesz representer for $\lambda$. The interpolation conditions $\lambda(\vs_\vf) = \lambda(\vf)$ $\forall \,\lambda\in\Lambda$ lead to a linear system, and as long as the functionals are linearly independent the problem is uniquely solvable. Further, $\vs_\vf$ is perpendicular to $\vf - \vs_\vf$ in $\calN_{\Phi}(\R^d)$, which gives us the following:
\begin{equation}\label{bestapproxgeneralinterp}
\|\vf - \vs_\vf\|_{\calN_\Phi(\R^d)} \leq \|\vf\|_{\calN_\Phi(\R^d)}, \mbox{\hspace{.5in}} \|\vs_\vf\|_{\calN_\Phi(\R^d)} \leq \|\vf\|_{\calN_\Phi(\R^d)}.
\end{equation}
Note that since $\Phi$ is a reproducing kernel for $\calN_\Phi(\R^d)$, the Riesz representer for $\lambda$ can be written in terms of $\Phi$. For example, \eqref{repproperty} shows that the evaluation functional defined by $\lambda(f) = \vb^Tf(x_j)$ is represented in the native space as $\Phi(\cdot,x_j)\vb$. Next we consider functionals involving $P_{div}$.

\begin{proposition}\label{bdryfunctionalrep}
Let $x,\vn\in\R^d$, and define the functional $\nu(\vf):=\vn^TP_{div}\vf(x)$. Then $\nu$ is continuous on $\calN_\Phi(\R^d)$ and has Riesz representer $\Phi_{div}(\cdot,x)\vn$.
\end{proposition}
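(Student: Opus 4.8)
The plan is to identify the Riesz representer by a single direct computation: I would show that
$\nu(\vf) = (\vf,\Phi_{div}(\cdot,x)\vn)_{\calN_\Phi(\R^d)}$ holds for every $\vf\in\calN_\Phi(\R^d)$. Once this identity is in hand, both claims of the proposition follow immediately. The right-hand side is an inner product against a fixed element, so Cauchy--Schwarz gives $|\nu(\vf)|\leq \|\Phi_{div}(\cdot,x)\vn\|_{\calN_\Phi(\R^d)}\,\|\vf\|_{\calN_\Phi(\R^d)}$, which is continuity, and by the Riesz representation theorem $\Phi_{div}(\cdot,x)\vn$ is then precisely the representer.

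First I would check that $\Phi_{div}(\cdot,x)\vn$ is a legitimate element of the native space. Since $\Phi(\cdot,x)\vn\in\calN_\Phi(\R^d)$ by the reproducing property \eqref{repproperty}, and since $\Phi_{div}(\cdot,x)\vn = P_{div}\Phi(\cdot,x)\vn$ by the identity noted at the end of the previous section, it suffices to know that $P_{div}$ maps $\calN_\Phi(\R^d)$ into itself. The key observation is that the native-space inner product \eqref{generalNSinnerprod} is of the weighted-$L_2$ form \eqref{weightedL2norm}, with nonnegative measurable weight $\varphi(\omega)=(|\omega|^2\widehat{\phi}(\omega))^{-1}$ (positive definiteness of $\phi$ forces $\widehat{\phi}>0$). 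Hence $P_{div}$ and $P_{curl}$ are orthogonal projections on $\calN_\Phi(\R^d)$ as well; in particular $P_{div}$ is self-adjoint and leaves the space invariant, so $P_{div}\vf\in\calN_\Phi(\R^d)$ has a well-defined pointwise value at $x$ (the native space being a reproducing kernel Hilbert space), and $\nu$ is genuinely defined on all of $\calN_\Phi(\R^d)$.

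The computation then proceeds in two moves. Using $\Phi_{div}(\cdot,x)\vn=P_{div}\Phi(\cdot,x)\vn$ and the self-adjointness of $P_{div}$, I would transfer the projection onto the first argument, writing $(\vf,\Phi_{div}(\cdot,x)\vn)_{\calN_\Phi(\R^d)} = (P_{div}\vf,\Phi(\cdot,x)\vn)_{\calN_\Phi(\R^d)}$. Applying the reproducing property \eqref{repproperty} to the field $P_{div}\vf$ then yields $(P_{div}\vf,\Phi(\cdot,x)\vn)_{\calN_\Phi(\R^d)}=\vn^T(P_{div}\vf)(x)=\nu(\vf)$, which closes the argument.

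The only point that warrants real care is the self-adjointness of $P_{div}$ on $\calN_\Phi(\R^d)$, i.e.\ confirming that the native space indeed falls under the scope of the weighted-$L_2$ inner products \eqref{weightedL2norm} for which $P_{div}$ is already known to be an orthogonal projection. This relies on the explicit Fourier form \eqref{generalNSinnerprod} of the native-space inner product, valid under the stated smoothness and integrability hypotheses on $\phi$. Everything else is a formal manipulation: I do not expect the transfer of $P_{div}$ across the inner product, nor the final appeal to \eqref{repproperty}, to present any difficulty, since $P_{div}\vf$ is a bona fide native-space element with continuous point evaluation.
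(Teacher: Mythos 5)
Your proposal is correct and follows essentially the same route as the paper: both hinge on the observation that the native-space inner product \eqref{generalNSinnerprod} is of the weighted-$L_2$ form \eqref{weightedL2norm}, so that $P_{div}$ is an orthogonal projection on $\calN_\Phi(\R^d)$, and both then reduce $\nu(\vf)$ to $(P_{div}\vf,\Phi(\cdot,x)\vn)_{\calN_\Phi(\R^d)}$ via the reproducing property. The only cosmetic difference is that the paper verifies the representer identity by an explicit Fourier-transform computation of $\widehat{\Phi_{div}(\cdot,x)\vn}$, whereas you obtain it abstractly from $\Phi_{div}(\cdot,x)\vn=P_{div}\Phi(\cdot,x)\vn$ and self-adjointness of $P_{div}$.
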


\begin{proof}
First note that by \eqref{generalNSinnerprod} and \eqref{weightedL2norm}, $P_{div}$ is a projection on $\calN_{\Phi}(\R^d)$. Using this and the reproducing kernel property of $\Phi$ we have 
\begin{eqnarray*}
|\nu(\vf)| &= & |(P_{div}\vf,\Phi(\cdot,x)\vn_j)_{\calN_{\Phi}(\R^d)}| \leq \|\Phi(\cdot,x)\vn\|_{\calN_{\Phi}(\R^d)} \|P_{div}\vf\|_{\calN_{\Phi}(\R^d)} \leq C\|\vf\|_{\calN_{\Phi}(\R^d)}.
\end{eqnarray*}
This gives us continuity. To verify the form of the representer, first note that the Fourier transform of $\vg:= \Phi_{div}(\cdot,x)\vn$ is given by
\[ \widehat{\vg}(\omega) = (|\omega|^2\vI - \omega\omega^T)\widehat{\phi}(\omega)e^{ix^T \omega}\vn.\] 
Using this and \eqref{generalNSinnerprod}, we have
\begin{eqnarray*}
(\vf,\vg)_{\calN_\Phi(\R^d)} & = & \vn^T \int_{\R^d}\left(\vI - \frac{\omega\omega^T}{|\omega|^T}\right)\widehat{\vf}(\omega)e^{ix^T \omega}\,d\omega  = \vn^T \int_{\R^d}\widehat{P_{div}\vf}(\omega)e^{ix^T \omega}\,d\omega = \vn^TP_{div}\vf(x).
\end{eqnarray*}
%
\end{proof}

\section{Kernel-based Decompositions}\label{decompconstructions}

In this section we show how to construct a kernel-based approximation to the decompositions discussed earlier. We will also show how one easily obtains potential functions from the kernel approximation. 

\subsection{Kernel Approximation with Divergence-free Boundary Conditions}\label{dfdecomp}

Given a target $\vf$ on $\Omega$ and boundary target $g$, it is our aim to construct a kernel approximation $\RBFDFtangent$ such that $P_{div}\RBFDFtangent$ and $P_{curl}\RBFDFtangent$, which we can compute analytically, approximate the appropriate terms of the decomposition in Proposition \ref{hodgeprop}.\footnote{We use the superscript $\vt$ because when $g=0$ the divergence-free portion is tangential to $\Gamma$.} We will construct our kernel-based vector decomposition by requiring full interpolation on nodes $X=\{x_1,x_2,\ldots,x_N\}\subset \Omega$, while at the same time enforcing boundary conditions at a dense set of nodes $Y = \{y_1,y_2,\ldots,y_M\}\subset \Gamma$. Although no repetition is allowed within each node set, $X$ and $Y$ can have a nonempty intersection.

Letting $\ve_i\in \R^d$ denote the vector whose only nonzero entry is a $1$ in the $i^\text{th}$ position, the interpolation functionals are given by $\lambda_j^{(i)}(\vf):=\ve_i^T \vf(x_j)$ for $1\leq i\leq d$, $x_j\in X$. The boundary functionals are given by $\nu_j(\vf):=\vn_{y_j}^TP_{div}\vf(y_j)$, $y_j\in Y$, where $\vn_y\in\R^d$ is the outward normal vector at $y\in \Gamma$. This gives a total of $dN + M$ conditions to be met. The basis functions to be used are the Riesz representers of these functionals, which from the previous section are given by $\Phi(\cdot,x_j)\ve_i$ and $\Phi_{div}(\cdot,y_j)\vn_{y_j}$, respectively.


Using these as basis functions, our RBF approximation will take the form
\begin{eqnarray}
\RBFDFtangent &=& \sum_{j=1}^{N}\sum_{i=1}^d\Phi(\cdot,x_j)\ve_ic_{ij} + \sum_{j=1}^{M}\Phi_{div}(\cdot,y_j)\vn_{y_j}d_j = \sum_{j=1}^{N}\Phi(\cdot,x_j)\vc_j + \sum_{j=1}^{M}\Phi_{div}(\cdot,y_j)\vn_{y_j}d_j, \label{eq:divfreeform}
\end{eqnarray}
where the coefficents $c_{ij}$, $1\leq i\leq d$ have been consolidated into the vector unknowns $\vc_j$ for each $j$, as in \eqref{generalMVkernelInterp}. Letting $\vf|_X$ denote the $dN\times 1$ vector whose $j^{\text{th}}$ $d\times 1$ block is given by $\vf(x_j)$, the interpolation conditions 1 and 2 above lead a linear system of the form
\begin{equation}\label{system}
\left[\begin{array}{cc}
             A & B \\
             B^T & C 
             \end{array}\right]\,
  \left[\begin{array}{c}
             \vc\\
             \vd 
             \end{array}\right]
= \left[\begin{array}{c}
             \vf|_{X}\\
             g |_{Y}
             \end{array}\right],
\end{equation}
where $A$ is the matrix given in \eqref{eq:Phi_gram_matrix}, $B$ is given by
\begin{align*}
B &= \left[\begin{array}{ccc}
             \Phi_{div}(x_1,y_1)\vn_{y_1} & \cdots & \Phi_{div}(x_1,y_M)\vn_{y_M} \\
             \vdots & \ddots & \vdots \\
             \Phi_{div}(x_N,y_1)\vn_{y_1} & \cdots & \Phi_{div}(x_N,y_M)\vn_{y_M} \\
             \end{array}\right], 
\end{align*}  
and $C$ is an $M\times M$ matrix given by $C_{ij} =\vn_{y_i}^T\Phi_{div}(y_i,y_j)\vn_{y_j}$. Note that due to the diagonal structure of the kernel $\Phi=\Delta\phi I$, the matrix $A$ can be rearranged to be block-diagonal, with $d$ identical $N\times N$ blocks along the diagonal. This not only reduces the cost of storing the interpolation matrix, but also makes it possible to solve \eqref{system} using a more efficient Schur complement method than if the matrix $A$ was dense~\cite{BenziSaddle}.

Note that the interpolation matrix in \eqref{system} is symmetric, and since we have taken the symmetric approach for generalized interpolation, it is also positive definite (and hence invertible) if the functionals involved are linearly independent \cite[Section 16.1]{WendlandScatteredData}. 
\begin{lemma}\label{functionallinind}
The functionals in $\Lambda=\{\lambda_{j}^{(i)}\,|\,x_j\in X,\,1\leq i\leq d\}\cup\{\nu_j\,|\,y_j\in Y\}$ are linearly independent. 
\end{lemma}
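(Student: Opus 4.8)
The plan is to show linear independence directly from the definition: suppose a linear combination of the functionals annihilates every element of $\calN_\Phi(\R^d)$, and deduce that all coefficients vanish. So I would write
\[
\sum_{j=1}^N \sum_{i=1}^d \alpha_{ij}\lambda_j^{(i)} + \sum_{j=1}^M \beta_j \nu_j = 0 \quad\text{on } \calN_\Phi(\R^d),
\]
and aim to conclude $\alpha_{ij}=0$ and $\beta_j = 0$. The natural way to exploit this is to test the functional against the Riesz representers. Because the generalized-interpolation system is exactly the Gram matrix of the representers, linear independence of the functionals is equivalent to linear independence of their representers $\{\Phi(\cdot,x_j)\ve_i\}\cup\{\Phi_{div}(\cdot,y_j)\vn_{y_j}\}$ in $\calN_\Phi(\R^d)$; I would reduce to proving the latter.

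The key step is to apply the orthogonal splitting $\calN_\Phi(\R^d) = P_{div}\calN_\Phi(\R^d)\oplus P_{curl}\calN_\Phi(\R^d)$ coming from \eqref{generalNSinnerprod} and \eqref{weightedL2norm}. The representers $\Phi(\cdot,x_j)\ve_i = \Phi_{div}(\cdot,x_j)\ve_i + \Phi_{curl}(\cdot,x_j)\ve_i$ have both components, while the boundary representers $\Phi_{div}(\cdot,y_j)\vn_{y_j}$ are purely divergence-free. So if a combination $\vs = \sum_{i,j}\alpha_{ij}\Phi(\cdot,x_j)\ve_i + \sum_j \beta_j\Phi_{div}(\cdot,y_j)\vn_{y_j}$ vanishes, then projecting with $P_{curl}$ forces $\sum_{i,j}\alpha_{ij}\Phi_{curl}(\cdot,x_j)\ve_i = 0$. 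Since $\Phi_{curl}$ is a positive definite kernel, the functionals $\vf\mapsto \ve_i^T P_{curl}\vf(x_j)$ at distinct nodes $x_j$ have linearly independent representers, which yields $\alpha_{ij}=0$ for all $i,j$. Here I would need the fact (analogous to Proposition \ref{bdryfunctionalrep}, with $\Phi_{div}$ replaced by $\Phi_{curl}$) that $\Phi_{curl}(\cdot,x_j)\ve_i$ is the representer of the curl-projected evaluation functional, together with positive definiteness of $\Phi_{curl}$ from \eqref{customizedkerneldef}.

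With the $\alpha_{ij}$ eliminated, the relation collapses to $\sum_j \beta_j \Phi_{div}(\cdot,y_j)\vn_{y_j} = 0$. This is a combination of divergence-free representers at distinct boundary points $y_j\in\Gamma$ with directions $\vn_{y_j}$. The matrix $C$ with $C_{ij}=\vn_{y_i}^T\Phi_{div}(y_i,y_j)\vn_{y_j}$ is exactly the Gram matrix of these elements, so it suffices to show $C$ is positive definite. I expect this to be the main obstacle, because positive definiteness of $\Phi_{div}$ as a matrix-valued kernel guarantees that $\sum_{i,j}\vc_i^T\Phi_{div}(y_i,y_j)\vc_j > 0$ only when the full vector coefficients $\vc_j$ are not all zero, whereas here each coefficient is constrained to the one-dimensional span of $\vn_{y_j}$. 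I would argue that the quadratic form $\sum_{i,j}\beta_i\beta_j\,\vn_{y_i}^T\Phi_{div}(y_i,y_j)\vn_{y_j}$ is the $\beta_j$-specialization of the positive definite form \eqref{posdefquadform} with $\vc_j = \beta_j\vn_{y_j}$; since $\vn_{y_j}\neq\vzero$, a nonzero $\beta$ produces a nonzero coefficient vector, so the form is strictly positive, giving $\beta_j=0$. I would double check that distinctness of the $y_j$ (guaranteed by the no-repetition assumption on $Y$) is what makes the point set feeding \eqref{posdefquadform} admissible, and that the directions $\vn_{y_j}$ being unit normals is irrelevant beyond their being nonzero.
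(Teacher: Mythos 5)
Your proposal is correct and follows essentially the same route as the paper: pass to the Riesz representers, apply $P_{curl}$ to isolate the interior terms (since the boundary representers are purely divergence-free), kill the $\alpha_{ij}$ via positive definiteness of $\Phi_{curl}$, and then kill the $\beta_j$ via positive definiteness of $\Phi_{div}$ applied to the coefficient vectors $\beta_j\vn_{y_j}$. Your explicit remark that nonzero $\beta_j$ yields a nonzero vector coefficient $\beta_j\vn_{y_j}$ (so the restricted quadratic form is still strictly positive) is exactly the step the paper uses implicitly.
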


\begin{proof}
Suppose that some linear combination of the functionals in $\Lambda$ sums to zero. This is equivalent to its Riesz representer vanishing, i.e. 
\[\vg := \sum_{j=1}^{N}\Phi(\cdot,x_j)\vc_j + \sum_{l=1}^{M}\Phi_{div}(\cdot,y_l)\vd_l = \vzero,\]
where $\vd_l = \vn_l d_l$ for some scalars $d_l$. Since the terms in the decomposition $\vg = P_{div}\vg + P_{curl}\vg$ are orthogonal in $\calN_{\Phi}(\R^d)$, we have $\|P_{curl}\vg\|_{\calN_\Phi(\R^d)}^2 = \vzero$. We also have
\[\|P_{curl}\vg\|^2_{\calN_\Phi(\R^d)} = \sum_{j,k}(\Phi_{curl}(\cdot,x_j)\vc_j,\Phi_{curl}(\cdot,x_k)\vc_k)_{\calN_\Phi(\R^d)}.\]
Using the native space inner product \eqref{generalNSinnerprod} with the Fourier identities 
\[\widehat{\Phi_{curl}(\cdot,x_j)\vc_j} = (\omega\omega^T)\vc_j\widehat{\phi}(\omega)e^{ix_j^T\omega},\quad\quad \widehat{\Phi(\cdot,x_k)\vc_k} = \vc_k|\omega|^2\widehat{\phi}(\omega)e^{ix_k^T\omega},\]
it follows that 
\[
(\Phi_{curl}(\cdot,x_j)\vc_j,\Phi_{curl}(\cdot,x_k)\vc_k)_{\calN_\Phi(\R^d)} =(\Phi_{curl}(\cdot,x_j)\vc_j,\Phi(\cdot,x_k)\vc_k)_{\calN_\Phi(\R^d)}.
\]
Thus the reproducing property of $\Phi$ gives us 
\[\|P_{curl}\vg\|^2_{\calN_\Phi(\R^d)} = \sum_{j,k}(\Phi_{curl}(\cdot,x_j)\vc_j,\Phi(\cdot,x_k)\vc_k)_{\calN_\Phi(\R^d)} = \sum_{j,k}\vc_k^T\Phi_{curl}(x_k,x_j)\vc_j,\]
and since $\Phi_{curl}$ is positive definite \eqref{posdefquadform} implies that this equaling zero necessitates $\vc_j=0$ for all $j=1,\ldots,N$. Thus $\vg$ only consists of the boundary terms, i.e.
\[\vg = \sum_{l=1}^{M}\Phi_{div}(\cdot,y_l)\vd_l,\]
from which one can show similarly that 
\[
\|\vg\|_{\calN_{\Phi}(\R^d)}^2 = \sum_{l,m}\vd_l^T\Phi_{div}(y_l,y_m)\vd_m,\]
and since $\Phi_{div}$ is also positive definite we must have $\vd_l=\vzero$ for all $l=1,\ldots,M$. This completes the proof.
\end{proof}

Once \eqref{system} is solved, the resulting approximation decomposes as follows:
\begin{eqnarray*}
\RBFDFtangent &=& \underbrace{\sum_{j=1}^{N}\Phi_{div}(\cdot,x_j)\vc_j + \sum_{j=1}^{M}\Phi_{div}(\cdot,y_j)\vn_{y_j}d_j}_{P_{div}\RBFDFtangent} + \underbrace{\sum_{j=1}^{N}\Phi_{curl}(\cdot,x_j)\vc_j}_{P_{curl}\RBFDFtangent}. 
\end{eqnarray*}
As a bonus, we get a stream function $\RBFstreamtangent$ and velocity potential $\RBFpresstangent$ satisfying
\begin{equation}\label{interpolantpotentials}
\RBFDFtangent = \vcurl(\RBFstreamtangent) + \nabla \RBFpresstangent.
\end{equation}
Indeed, the identities \eqref{customizedkerneldef} imply that such potentials are given by
\[ \RBFstreamtangent := -\sum_{j=1}^{N}\vcurl(\phi(\cdot,x_j)\vc_j) -\sum_{j=1}^{M}\vcurl(\phi(\cdot,x_j)\vn_{y_j})d_j \quad\mbox{and}\quad \RBFpresstangent := -\sum_{j=1}^{N}\nabla^T(\phi(\cdot,x_j)\vc_j). 
\]
\subsection{Kernel Approximation with Curl-free Boundary Conditons}\label{cfdecomp}

We now focus on how to obtain a kernel-based approximation to the decomposition in Proposition \ref{hodgeprop2}, whose gradient term $\nabla p$ is normal to the boundary. As in the previous section, we enforce full interpolation on a node set $X$ and apply boundary conditions on a node set $Y$. The boundary conditions are imposed in this case by first projecting a kernel approximation $\RBFCFnormal$ onto the subspace of curl-free functions, and then setting all tangential components to zero pointwise. In $d=2$ dimensions, this is given by $\vt^T_{y_j}P_{curl}\RBFCFnormal(y_j) = 0$ for all $y_j\in Y$, where $\vt_{y_j}$ is tangent to $\Gamma$ at $y_j$. As before, the Riesz representers give the basis functions one should consider: for full interpolation they are the same as the previous section, and the boundary-centered basis functions are of the form $\Phi_{curl}(\cdot,y_j)\vt_{y_j}$. Thus the interpolant is written as
\begin{eqnarray}
\RBFCFnormal &=&\sum_{j=1}^{N}\Phi(\cdot,x_j)\vc_j + \sum_{j=1}^{M}\Phi_{curl}(\cdot,y_j)\vt_{y_j}d_j. \label{eq:curlfreenormalform}
\end{eqnarray}
In the $d=3$ case the two dimensional boundary leads to two basis functions at each shift on the boundary. For notational simplicity, we will continue with the $d=2$ case here. 

The interpolation constraints give rise to a linear system similar to \eqref{system} for determining the coefficients $\vc_j$ and $d_j$:
\begin{equation}\label{cfsystem}
\left[\begin{array}{cc}
             A & B \\
             B^T & C 
             \end{array}\right]\,
  \left[\begin{array}{c}
             \vc\\
             \vd 
             \end{array}\right]
= \left[\begin{array}{c}
             \vf|_{X}\\
             \vzero 
             \end{array}\right],
\end{equation}
where $A$ is the matrix given in \eqref{eq:Phi_gram_matrix}, B is given by
\begin{align*}
B &= \left[\begin{array}{ccc}
             \Phi_{curl}(x_1,y_1)\vt_{y_1} & \cdots & \Phi_{curl}(x_1,y_M)\vt_{y_M} \\
             \vdots & \ddots & \vdots \\
             \Phi_{curl}(x_N,y_1)\vt_{y_1} & \cdots & \Phi_{curl}(x_N,y_M)\vt_{y_M} \\
             \end{array}\right], 
\end{align*}
and $C$ is the $M\times M$ matrix with $C_{ij}=\vt_{y_i}^T\Phi_{curl}(y_i,y_j)\vt_{y_j}$.It can be shown using an argument similar to that in Lemma \ref{functionallinind} that the linear functionals involved are linearly independent, which guarantees that the matrix in \eqref{cfsystem} is symmetric and positive definite. The decomposition of the resulting kernel approximation is given by:
%
\begin{eqnarray*}
\RBFCFnormal &=& \underbrace{\sum_{j=1}^{N}\Phi_{div}(\cdot,x_j)\vc_j}_{P_{div}\RBFCFnormal} + \underbrace{\sum_{j=1}^{N}\Phi_{curl}(\cdot,x_j)\vc_j + \sum_{j=1}^{M}\Phi_{curl}(\cdot,y_j)\vt_{y_j}d_j}_{P_{curl}\RBFCFnormal}.
\end{eqnarray*}

In Section \ref{dferrorsection} we will show that $P_{div}\RBFCFnormal$ and $P_{curl}\RBFCFnormal$ approximate the terms from Proposition \ref{hodgeprop2}. Also one can use the form of the kernels \eqref{customizedkerneldef} to access potential functions $\RBFstreamnormal$ and $\RBFpressnormal$.


\section{Error Estimates}\label{mainerrorsection}

Our analysis follows the paradigm of RBF error estimates developed in recent years, where bounds on Sobolev functions having many zeros (the so-called ``zeros lemmas,'' or ``sampling inequalities'') play a prominent role \cite{NarcowichEtAl2005:ZerosLemma}. We will review the specific results we require below, and extend them slightly to suit our purposes. Next, we derive the error estimates in Sections \ref{dferrorsection} and \ref{cferrorsection}.

\subsection{Zeros Lemmas}
The zeros lemmas involve bounding the norm of Sobolev functions that vanish on a set $X = \{x_1,\ldots,x_N\}\subset \Omega\subset\R^d$ in terms of the density of $X$ in $\Omega$, which is quanitfied by the \emph{mesh norm}:
\[h_{\Omega} := \sup_{x\in\Omega}\text{dist}(x,X).\]
The following is from \cite{NarcowichEtAl2005:ZerosLemma}, with improvements in \cite[Theorem 4.6]{Wendland2009:DivFreeStokes}.  
\begin{proposition}\label{prop:zeroslemma}
Let $\Omega\subset \R^d$ be a bounded domain with Lipschitz boundary. Let $s\in \R$ with $s > d/2$, and let $\mu\in \R$ satisfy $0\leq \mu \leq s$. Also, let $X\subset\Omega$ be a discrete set with mesh norm $h_{\Omega}$ sufficiently small. Then there is a constant depending only on $\Omega$ such that if $h_{\Omega}\leq C_{\Omega}$ and if $u\in H^s(\Omega)$ satisfies $u|_{X}=0$, then
\begin{equation}
\|u\|_{H^\mu(\Omega)}\leq Ch_{\Omega}^{s-\mu}\|u\|_{H^s(\Omega)},
\end{equation}
where the constant $C$ is independent of $h_{\Omega}$ and $u$.
\end{proposition}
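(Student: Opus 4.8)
The plan is to reduce the global estimate to a uniform local estimate on small patches and then reassemble. First I would cover $\Omega$ by a finite family of cubes (or balls) $\{Q_i\}$ of side length comparable to $h_\Omega$ with bounded overlap; the Lipschitz (hence interior cone) property of $\Omega$ guarantees such a covering can be arranged so that each local patch is star-shaped with respect to a ball of proportional radius, which is what the Bramble--Hilbert machinery requires. Because the mesh norm of $X$ is $h_\Omega$, once $h_\Omega \le C_\Omega$ one can ensure that each local node set $Z_i := X \cap Q_i$ is nonempty and, after rescaling $Q_i$ to a reference cube of unit size, has mesh norm bounded by a fixed fraction depending only on $d$ and $\ell := \lceil s\rceil - 1$.

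On a single patch I would combine two ingredients. The first is a Bramble--Hilbert estimate: for $u \in H^s(Q_i)$ there is an averaged Taylor polynomial $p \in \pi_\ell$ with $|u-p|_{H^k(Q_i)} \le C\,(\mathrm{diam}\,Q_i)^{s-k}\,|u|_{H^s(Q_i)}$ for $0 \le k \le \lceil s\rceil$, together with an analogous $L_\infty$ bound for $u-p$ coming from the embedding $H^s \hookrightarrow C^0$ (valid since $s>d/2$, which is exactly what makes point evaluation on $X$ meaningful). The second is a norming-set (polynomial reproduction) argument: under the smallness of the rescaled mesh norm, $Z_i$ is a norming set for $\pi_\ell$ with norming constant bounded independently of $i$ and $h_\Omega$, so $\|p\|_{L_\infty(Q_i)} \le C\,\|p|_{Z_i}\|_{\ell_\infty(Z_i)}$. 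Since $u|_X = 0$ forces $p|_{Z_i} = (p-u)|_{Z_i}$, the two ingredients give $\|p\|_{L_\infty(Q_i)} \le C\,(\mathrm{diam}\,Q_i)^{s-d/2}\,|u|_{H^s(Q_i)}$, whence $\|p\|_{L_2(Q_i)} \le C\,(\mathrm{diam}\,Q_i)^{s}\,|u|_{H^s(Q_i)}$. A polynomial inverse (Markov) inequality then gives $|p|_{H^k(Q_i)} \le C\,(\mathrm{diam}\,Q_i)^{-k}\,\|p\|_{L_2(Q_i)}$, and the triangle inequality yields the local bound $|u|_{H^k(Q_i)} \le C\,(\mathrm{diam}\,Q_i)^{s-k}\,|u|_{H^s(Q_i)}$ for each integer $0 \le k \le \ell$.

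To globalize I would square these local bounds and sum over $i$; the bounded overlap converts $\sum_i |u|_{H^s(Q_i)}^2$ into $C\,|u|_{H^s(\Omega)}^2$, and since $\mathrm{diam}\,Q_i \sim h_\Omega$ this produces $|u|_{H^k(\Omega)} \le C\,h_\Omega^{\,s-k}\,|u|_{H^s(\Omega)}$ at each integer $k$ in range. Collecting these for all $j \le k$ and using $h_\Omega \le C_\Omega$ so that the slowest-decaying power dominates gives the full-norm estimate $\|u\|_{H^k(\Omega)} \le C\,h_\Omega^{\,s-k}\,\|u\|_{H^s(\Omega)}$, and the fractional values $0 \le \mu \le s$ then follow by operator interpolation between consecutive integer estimates, as recalled in the preliminaries. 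The step I expect to be the main obstacle is the norming-set claim with a constant uniform in both the patch index and $h_\Omega$: one must show that scattered points whose rescaled mesh norm is small enough reproduce polynomials of degree $\ell$ stably, which is the precise role of the hypotheses $s > d/2$ and $h_\Omega \le C_\Omega$. Tracking the powers of $h_\Omega$ correctly through the rescaling to the reference cube---in particular the $(\mathrm{diam}\,Q_i)^{-d/2}$ factor that converts the $L_\infty$ estimate for $u-p$ into $\ell_\infty$ control of its samples on $Z_i$---is the bookkeeping that must be carried out with care.
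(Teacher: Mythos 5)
The paper does not actually prove this proposition --- it imports it from the cited literature (Narcowich--Ward--Wendland, with the improvements of Wendland's Theorem 4.6) --- and your sketch is precisely the standard argument underlying those references: a bounded-overlap cover by star-shaped patches of diameter $\sim h_\Omega$, an averaged-Taylor/Bramble--Hilbert estimate combined with the Sobolev embedding $H^s\hookrightarrow C^0$ (using $s>d/2$), a norming-set bound for polynomial reproduction from the local samples, Markov inverse inequalities, summation over the cover, and operator interpolation for fractional $\mu$. Your outline is correct and matches the proof the paper relies on; the only ingredient worth naming explicitly is the fractional-order (Dupont--Scott) version of the Bramble--Hilbert lemma, which is what makes the local estimate $|u-p|_{H^k(Q_i)}\leq C(\mathrm{diam}\,Q_i)^{s-k}|u|_{H^s(Q_i)}$ legitimate when $s$ is not an integer.
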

\noindent This result can also be extended to manifolds in a straightforward way (see \cite[Lemma 10]{FuselierWright_restrictedkernels}). Thus, if $u\in H^{s}(\Gamma)$ satisfies $u|_{Y}=0$, for $0\leq \mu\leq s$ one has
\begin{equation}\label{eq:zerosbdry}
\|u\|_{H^{\mu}(\Gamma)}\leq Ch_{\Gamma}^{s-\mu}\|u\|_{H^s(\Gamma)}.
\end{equation}
%
\noindent Here the mesh norm $h_\Gamma$ for a finite set $Y\subset \Gamma$, is defined just as in the Euclidean case, the only difference being that distances are measured on the surface $\Gamma$.

Note that the proposition above, the smoothness in the norm on the right-hand-side of the estimate is assumed to be high-enough so that the associated space of functions is continuous. However, such estimates hold for continuous functions in rougher norms, that is, if $s > \max\{d/2,1\}$ and $u\in H^s(\Omega)$ satisfies $u|_{X}=0$, then\footnote{The proof of Proposition \ref{prop:zeroslemma} involves local polynomial approximations on patches - in this case the polnomials are simply constants, which greatly simplifies the arguments.}
\begin{equation*}
\|u\|_{L_2(\Omega)}\leq Ch_{\Omega}|u|_{H^1(\Omega)}.
\end{equation*}
If the underlying domain is a surface, by applying this estimate on patches, we get the following for continuous functions $u:\Gamma\rightarrow \R$ with zeros on $Y\subset \Gamma$:
\begin{equation}\label{boundaryzerosH1}
\|u\|_{L_2(\Gamma)} \leq Ch_\Gamma|u|_{H^1(\Gamma)}.
\end{equation}

%
%
%

Lastly, in what follows we will need zeros estimates in negative-indexed Sobolev norms. Note that if $u\in C(\Gamma)\cap H^1(\Gamma)$, then obviously $u\in H^{1}(\Gamma)\subset H^{-1}(\Gamma)$. Thus we get
\[\|u\|_{H^{-1}(\Gamma)} = \sup_{\|\varphi\|_{H^{1}(\Gamma)} = 1} \langle u,\varphi\rangle= \|u\|_{L_2(\Gamma)}^2 / \|u\|_{H^{1}(\Gamma)}, \]
where since $u\in H^1(\Gamma)$ the supremum is achieved by choosing $\varphi = u/\|u\|_{H^{1}(\Gamma)}$. Thus if $u$ vanishes on $Y$, then with \eqref{boundaryzerosH1} we obtain
\begin{equation}\label{boundaryzerosHneg1}
\|u\|_{H^{-1}(\Gamma)} = \|u\|_{L_2(\Gamma)}^2 / \|u\|_{H^{1}(\Gamma)} \leq Ch_\Gamma\|u\|_{L_2(\Gamma)}.
\end{equation}

\subsection{Convergence with Divergence-free Boundary Conditions}\label{dferrorsection}

For the rest of the paper we assume that the RBF $\phi$ is such that $\mathcal{N}_{\Phi}(\R^d) = \vtH^\tau(\Omega)$ with equivalent norms, the boundary $\Gamma$ is smooth (at least $\mathcal{C}^{m,1}$ with $0<\tau\leq m$), and that the mesh norms for the node sets $X$ and $Y$ ($h_{\Omega}$ and $h_{\Gamma}$) are sufficiently small for the zeros lemmas to be applied. Further, we assume that $g$ satisfies the condition $\langle g,1\rangle_{\Gamma_i} = 0$ on each connected component of $\Gamma$. We begin with a basic interpolation estimate.

\begin{lemma}\label{lemma:fullapprox}
Let $\mu$ satisfy $0\leq \mu\leq \tau$. Let $\RBFDFtangent$ be the kernel approximation discussed in Section \ref{dfdecomp} for a given $\vf$ and $g$. Then for all $\vf\in \vH^\tau(\Omega)$ and $g\in H^{\tau-1/2}(\Gamma)$ we have\footnote{Here and throughout, $C$ is a constant independent of $\vf$, $g$, and the node sets.}
\[ \|\vf - \RBFDFtangent\|_{\vH^\mu(\Omega)} \leq C h_{\Omega}^{\tau-\mu}\left(\| \vf\|_{\vH^\tau(\Omega)} + \|g\|_{H^{\tau-1/2}(\Gamma)}\right).\]
\end{lemma}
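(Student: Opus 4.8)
The plan is to exploit the fact that $\RBFDFtangent$ is a \emph{generalized interpolant} of $\vf$ in the native space $\calN_\Phi(\R^d) = \vtH^\tau$, and then convert the native-space best-approximation property into a concrete Sobolev error bound via the zeros lemma (Proposition \ref{prop:zeroslemma}). First I would introduce the extension $E\vf \in \vtH^\tau(\R^d)$ from Lemma \ref{lerayextensions}, which agrees with $\vf$ on $\Omega$ and satisfies the continuity bound $\|E\vf\|_{\vtH^\tau(\R^d)} \leq C(\|\vf\|_{\vH^\tau(\Omega)} + \|g\|_{H^{\tau-1/2}(\Gamma)})$. The key observation is that $\RBFDFtangent$ is the generalized interpolant to $E\vf$ with respect to the functionals in $\Lambda$: the interpolation functionals $\lambda_j^{(i)}$ read off values at $x_j \in \Omega$ where $E\vf = \vf$, and the boundary functionals $\nu_j(\vf) = \vn_{y_j}^T P_{div}\vf(y_j)$ applied to $E\vf$ recover $P_{div}E\vf|_\Omega\cdot\vn = \vw\cdot\vn = g$ by \eqref{decompextension}. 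Hence the data for $E\vf$ under $\Lambda$ matches the right-hand side of \eqref{system}, so $\RBFDFtangent$ interpolates $E\vf$ and the best-approximation bound \eqref{bestapproxgeneralinterp} gives $\|\RBFDFtangent\|_{\calN_\Phi} \leq \|E\vf\|_{\calN_\Phi}$.

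Next I would set $u := E\vf - \RBFDFtangent$ and observe that $u$ vanishes on the node set $X$ (since both functions equal $\vf$ there, componentwise via the $\lambda_j^{(i)}$). Applying the zeros lemma to each component of $u$ on $\Omega$ with smoothness $s = \tau$ (using $\tau > d/2$, which is implied by the native-space identification) and target index $\mu$ yields
\[
\|\vf - \RBFDFtangent\|_{\vH^\mu(\Omega)} = \|u\|_{\vH^\mu(\Omega)} \leq C h_\Omega^{\tau-\mu}\|u\|_{\vH^\tau(\Omega)} \leq C h_\Omega^{\tau-\mu}\|u\|_{\vtH^\tau(\R^d)},
\]
where the last step uses $\|u\|_{\vH^\tau(\Omega)} \leq \|u\|_{\vH^\tau(\R^d)} \leq \|u\|_{\vtH^\tau(\R^d)}$, the final inequality being the embedding $\vtH^\tau \hookrightarrow \vH^\tau$ noted after \eqref{Htildeinnerprod}.

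It then remains to bound $\|u\|_{\vtH^\tau(\R^d)}$ by the data. Since $\calN_\Phi = \vtH^\tau$ with equivalent norms, I would estimate $\|u\|_{\vtH^\tau(\R^d)} \leq C\|u\|_{\calN_\Phi} \leq C(\|E\vf\|_{\calN_\Phi} + \|\RBFDFtangent\|_{\calN_\Phi}) \leq 2C\|E\vf\|_{\calN_\Phi} \leq C\|E\vf\|_{\vtH^\tau(\R^d)}$, invoking the triangle inequality together with the interpolant-norm bound $\|\RBFDFtangent\|_{\calN_\Phi}\leq\|E\vf\|_{\calN_\Phi}$ from \eqref{bestapproxgeneralinterp}. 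Finally, the continuity of the extension operator from Lemma \ref{lerayextensions} gives $\|E\vf\|_{\vtH^\tau(\R^d)} \leq C(\|\vf\|_{\vH^\tau(\Omega)} + \|g\|_{H^{\tau-1/2}(\Gamma)})$, and chaining these inequalities together produces exactly the claimed estimate.

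The step I expect to be the main obstacle is verifying cleanly that $\RBFDFtangent$ really is the generalized interpolant to the \emph{single} function $E\vf$ under the full functional set $\Lambda$, rather than a hybrid object built from $\vf$ on the interior and $g$ on the boundary. The subtlety is that the boundary functionals $\nu_j$ act on $P_{div}$ of the argument, so one must check that $\nu_j(E\vf) = \vn_{y_j}^T P_{div}E\vf(y_j)$ equals the prescribed boundary value $g(y_j)$; this is precisely where the identity $P_{div}E\vf|_\Omega = \vw$ with $\vw\cdot\vn = g$ from Proposition \ref{hodgeprop} and Lemma \ref{lerayextensions} is essential, and it requires that $P_{div}E\vf$ is continuous up to $\Gamma$ so that pointwise evaluation at $y_j$ makes sense. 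Once this matching of data is established, the linear independence of $\Lambda$ (Lemma \ref{functionallinind}) guarantees uniqueness of the interpolant, so $\RBFDFtangent$ is unambiguously the native-space projection of $E\vf$, and the rest of the argument is routine.
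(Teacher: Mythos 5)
Your proposal is correct and follows essentially the same route as the paper's proof: identify $\RBFDFtangent$ with the generalized interpolant $\vs^\vt_{E\vf}$ of the extension by matching the data of the functionals in $\Lambda$, apply the zeros lemma on $\Omega$ to the difference (which vanishes on $X$), pass to the native-space norm via the equivalence $\calN_\Phi(\R^d)=\vtH^\tau(\R^d)$ and the best-approximation property \eqref{bestapproxgeneralinterp}, and close with the continuity of $E$ from Lemma \ref{lerayextensions}. The only cosmetic difference is that you use the triangle inequality together with $\|\RBFDFtangent\|_{\calN_\Phi}\leq\|E\vf\|_{\calN_\Phi}$ where the paper invokes the first inequality of \eqref{bestapproxgeneralinterp} directly; both are immediate consequences of the orthogonality of the generalized interpolant.
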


\begin{proof}
Since $\vf -\RBFDFtangent$ has zeros on $X$, we may apply Proposition \ref{prop:zeroslemma} to get
\[\|\vf - \RBFDFtangent\|_{\vH^\mu(\Omega)} \leq Ch_{\Omega}^{\tau-\mu}\|\vf - \RBFDFtangent\|_{\vH^\tau(\Omega)}.\] 
Now we use the extension operator. Since $E\vf|_\Omega = \vf$ and $(P_{div}E\vf)|_{\Omega} = \vw$, where $\vw$ satisfies $\vw\cdot\vn = g$, then the data in the system used to determine $\vs^{\vt}_{E\vf}$ (see \eqref{system}) is the same as that of $\RBFDFtangent$. Thus we get $\RBFDFtangent = \vs^{\vt}_{E\vf}$. This with \eqref{bestapproxgeneralinterp}, the fact that $\vtH^\tau(\R^d)$ is norm equivalent to $\calN_\Phi(\R^d)$, and and the continuity of $E$ gives
\begin{eqnarray*}
\|\vf - \RBFDFtangent\|_{\vH^\tau(\Omega)} & = &\|E\vf - \vs^{\vt}_{E\vf}\|_{\vH^\tau(\Omega)} \leq \|E\vf - \vs^{\vt}_{E\vf}\|_{\vtH^\tau(\R^d)} \leq C\|E\vf - \vs^{\vt}_{E\vf}\|_{\calN_\Phi(\R^d)}\\
&\leq& C\|E\vf\|_{\calN_\Phi(\R^d)} \leq C\|E\vf\|_{\vtH^\tau(\R^d)} \leq C\left(\|\vf\|_{\vH^\tau(\Omega)} +\|g\|_{H^{\tau - 1/2}(\Gamma)}\right).
\end{eqnarray*}
This completes the proof.
\end{proof}

We continue our our analysis by showing that $P_{div}\RBFDFtangent\cdot \vn - g$ is small on the boundary.
\begin{lemma}\label{lemma:bdryapprox}
Let $\mu$ satisfy $0\leq \mu\leq \tau$. For all $\vf\in \vH^\tau(\Omega)$ and $g\in H^{\tau-1/2}(\Gamma)$ we have
\[ \|P_{div}\RBFDFtangent\cdot \vn - g \|_{H^{\mu-1/2}(\Gamma)} \leq C h^{\tau-\mu}_\Gamma \left(\|\vf\|_{\vH^\tau(\Omega)} + \|g\|_{H^{\tau - 1/2}(\Gamma)}\right).\]
\end{lemma}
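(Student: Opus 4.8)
The quantity to bound, $P_{div}\RBFDFtangent\cdot\vn - g$, lives on $\Gamma$ and vanishes on $Y$ by construction: the boundary functionals $\nu_j(\vf)=\vn_{y_j}^T P_{div}\vf(y_j)$ enforce exactly $P_{div}\RBFDFtangent(y_j)\cdot\vn_{y_j}=g(y_j)$ for all $y_j\in Y$. So the natural strategy is to apply the boundary zeros lemma \eqref{eq:zerosbdry} to the function $u:=P_{div}\RBFDFtangent\cdot\vn-g$, which has zeros on $Y$. The plan is: first establish that $u$ has enough Sobolev regularity on $\Gamma$ to invoke \eqref{eq:zerosbdry} at the right smoothness level $\tau$, apply the zeros lemma to trade powers of $h_\Gamma$ for a jump from $H^{\mu-1/2}(\Gamma)$ up to $H^{\tau-1/2}(\Gamma)$, and then bound the resulting $H^{\tau-1/2}(\Gamma)$ norm of $u$ by the data $\|\vf\|_{\vH^\tau(\Omega)}+\|g\|_{H^{\tau-1/2}(\Gamma)}$.

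First I would apply \eqref{eq:zerosbdry} with the index shifted by $1/2$ (using $s=\tau-1/2$ and the target smoothness $\mu-1/2$, both nonnegative once we note $\mu\geq 0$ and recall the boundary-norm indexing). This gives
\[
\|P_{div}\RBFDFtangent\cdot\vn-g\|_{H^{\mu-1/2}(\Gamma)}\leq C h_\Gamma^{\tau-\mu}\,\|P_{div}\RBFDFtangent\cdot\vn-g\|_{H^{\tau-1/2}(\Gamma)}.
\]
This is the step that produces the desired power $h_\Gamma^{\tau-\mu}$, and it is essentially automatic once the regularity of $u$ is in hand.

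Next I would control the $H^{\tau-1/2}(\Gamma)$ norm on the right. The clean way is to pass from the trace on $\Gamma$ back into the domain: the normal-trace map $\vu\mapsto \vu\cdot\vn$ is continuous from divergence-controlled fields in $\vH^\tau(\Omega)$ into $H^{\tau-1/2}(\Gamma)$, so
\[
\|P_{div}\RBFDFtangent\cdot\vn-g\|_{H^{\tau-1/2}(\Gamma)}\leq \|(P_{div}\RBFDFtangent-\vw)\cdot\vn\|_{H^{\tau-1/2}(\Gamma)}\leq C\,\|P_{div}\RBFDFtangent-\vw\|_{\vH^{\tau}(\Omega)},
\]
where $\vw$ is the exact divergence-free term with $\vw\cdot\vn=g$. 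Since $P_{div}$ is bounded on the native space and $\vf-\RBFDFtangent=E\vf-\vs^{\vt}_{E\vf}$, I would estimate $\|P_{div}\RBFDFtangent-\vw\|_{\vH^\tau(\Omega)}=\|P_{div}(\RBFDFtangent-\vf)\|_{\vH^\tau(\Omega)}$ (using $P_{div}E\vf|_\Omega=\vw$) by the native-space best-approximation bound \eqref{bestapproxgeneralinterp} together with the continuity of the extension $E$ from Lemma \ref{lerayextensions}, exactly mirroring the chain at the end of Lemma \ref{lemma:fullapprox}; this yields the data bound $\|\vf\|_{\vH^\tau(\Omega)}+\|g\|_{H^{\tau-1/2}(\Gamma)}$.

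\textbf{Main obstacle.} The delicate point is the regularity bookkeeping at the boundary, specifically justifying that $u=P_{div}\RBFDFtangent\cdot\vn-g$ actually belongs to $H^{\tau-1/2}(\Gamma)$ so that the zeros lemma applies at smoothness $\tau-1/2$, and matching the half-integer shifts between the domain Sobolev scale and the boundary scale. One must verify that $P_{div}\RBFDFtangent$ (a finite sum of smooth kernel terms) has a well-defined normal trace in the correct space and that the divergence-free structure lets the normal-trace estimate use only the $\vH^\tau(\Omega)$ norm rather than a graph norm involving the divergence. The fractional indices may require an interpolation argument as in \eqref{boundaryinterpolationspace}, but none of this is deep; it is careful indexing rather than a new idea.
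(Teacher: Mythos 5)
Your argument for $\mu\geq 1/2$ is essentially the paper's: apply the boundary zeros lemma \eqref{eq:zerosbdry} at level $\tau-1/2$ to $u=P_{div}\RBFDFtangent\cdot\vn-g$, then control $\|u\|_{H^{\tau-1/2}(\Gamma)}$ via the trace theorem, the native-space optimality \eqref{bestapproxgeneralinterp} applied to $\vs^{\vt}_{E\vf}=\RBFDFtangent$, and the continuity of the extension $E$. (Your variant of the second step, subtracting $\vw$ before taking the normal trace instead of using a triangle inequality, is fine and equivalent.) The genuine gap is the range $0\leq\mu<1/2$: there the target index $\mu-1/2$ is \emph{negative}, so your claim that both indices are ``nonnegative once we note $\mu\geq 0$'' is false, and \eqref{eq:zerosbdry}, which requires $0\leq\mu\leq s$, does not apply. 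This is not just index bookkeeping. If you stop the zeros argument at $L_2(\Gamma)$ and use the embedding $L_2(\Gamma)\hookrightarrow H^{\mu-1/2}(\Gamma)$, you only obtain the rate $h_\Gamma^{\tau-1/2}$ rather than $h_\Gamma^{\tau-\mu}$, losing up to half an order precisely in the case $\mu=0$ --- and the $H^{-1/2}(\Gamma)$ bound at $\mu=0$ is exactly what Lemma \ref{lemma:projapprox} consumes through \eqref{NeumannSolutionContinuity} to get the $L_2(\Omega)$ convergence rate, so the loss would propagate to the main theorem.

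The paper closes this with an additional ingredient you did not supply: the duality-based negative-norm zeros estimate \eqref{boundaryzerosHneg1}, namely $\|u\|_{H^{-1}(\Gamma)}\leq Ch_\Gamma\|u\|_{L_2(\Gamma)}$ for $u$ vanishing on $Y$. Viewing $T(\vf,g):=P_{div}\RBFDFtangent\cdot\vn-g$ as an operator on $\calB=\vH^\tau(\Omega)\times\mathcal{V}$, this gives $\|T\|_{\calB\to L_2(\Gamma)}\leq Ch_\Gamma^{\tau-1/2}$ and $\|T\|_{\calB\to H^{-1}(\Gamma)}\leq Ch_\Gamma^{\tau+1/2}$, and operator interpolation between $L_2(\Gamma)$ and $H^{-1}(\Gamma)$ with $\theta=1/2-\mu$ (via \eqref{boundaryinterpolationspace}) recovers the full rate $h_\Gamma^{\tau-\mu}$ for $0\leq\mu<1/2$. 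To complete your proof you would need to add this negative-norm zeros estimate and the interpolation step, or an equivalent duality argument.
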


\begin{proof}
First assume that $\mu\geq 1/2$. Recall that $P_{div}\RBFDFtangent\cdot\vn = g$ on the node set $Y\subset \Gamma$ by construction. Since the normals are assumed smooth and $\mu-1/2\geq 0$, we can apply \eqref{eq:zerosbdry} to get
\begin{eqnarray*}
\|P_{div}\RBFDFtangent\cdot \vn - g\|_{H^{\mu-1/2}(\Gamma)} &\leq &C h^{\tau-\mu-1/2}_{\Gamma}\|P_{div}\RBFDFtangent\cdot \vn - g\|_{H^{\tau-1/2}(\Gamma)}\\
& \leq & Ch_{\Gamma}^{\tau -\mu - 1/2}\left(\|P_{div}\RBFDFtangent\|_{\vH^{\tau- 1/2}(\Gamma)} + \|g\|_{H^{\tau - 1/2}(\Gamma)} \right).
\end{eqnarray*}
Applying the Trace Theorem and the fact that the $\vtH^{\tau}(\R^d)$ norm bounds the $\vH^{\tau}(\R^d)$ norm gives us
\[ 
\|P_{div}\RBFDFtangent\|_{\vH^{\tau-1/2}(\Gamma)} \leq C\|P_{div}\RBFDFtangent\|_{\vH^\tau(\Omega)} \leq C\|P_{div}\RBFDFtangent\|_{\vtH^\tau(\R^d)} = C\|P_{div}\vs^\vt_{E\vf}\|_{\vtH^\tau(\R^d)} \leq C \|\vs^\vt_{E\vf}\|_{\vtH^\tau(\R^d)},
\]
where in the last two steps we used the fact that $\RBFDFtangent = \vs^\vt_{E\vf}$ and that $P_{div}$ is a projection on $\vtH^{\tau}(\R^d)$.
The continuous embedding of $\calN_\Phi(\R^d)$ into $\vtH^\tau(\R^d)$, the bounds \eqref{bestapproxgeneralinterp}, and continuity of $E$ gives us
\begin{eqnarray*}
\|\vs^\vt_{E\vf}\|_{\vtH^\tau(\R^d)} &\leq &C\|\vs^\vt_{E\vf}\|_{\calN_{\Phi}(\R^d)} \leq C\|E\vf\|_{\calN_{\Phi}(\R^d)} \leq C\|E\vf\|_{\vtH^\tau(\R^d)} \leq C\|\vf\|_{\vH^\tau(\Omega)}.
\end{eqnarray*}
This gives us the correct approximation orders down to $\mu=1/2$. To get the estimates for $0\leq \mu\leq 1/2$, we will measure the error in the $H^{-1}(\Gamma)$ norm, and then obtain the desired bound by interpolation.

Let $\mathcal{V} = \{v\in H^{\tau-1/2}(\Gamma)\,:\, \langle v,1\rangle|_{\Gamma_i} = 0\,\,,\,\, 0\leq i\leq K\}$, and note that this space is closed in the $H^{\tau - 1/2}(\Gamma)$ norm. Next consider the Banach space $\calB:=\vH^{\tau}(\Omega)\times \mathcal{V}$ with obvious norm $\|(\vf,g)\|_{\calB}:= \|\vf\|_{\vH^{\tau}(\Omega)} + \|g\|_{H^{\tau -1/2}(\Gamma)}$. Now define the linear map $T:\calB\rightarrow L_2(\Gamma)$ given by $T(\vf,g):= P_{div}\RBFDFtangent\cdot\vn - g $. The argument above shows that 
\[\|T\|_{\calB\rightarrow L_2(\Gamma)} \leq Ch_\Gamma^{\tau-1/2}.\]
Similarly, considering $T$ as a map from $\calB$ to $H^{-1}(\Gamma)$, the zeros estimate \eqref{boundaryzerosHneg1} applies to the same arguments above to yield \[\|T\|_{\calB\rightarrow H^{-1}(\Gamma)} \leq Ch_\Gamma^{\tau+1/2}.\] Estimates for the space $H^{\mu-1/2}(\Gamma)$ now follow from interpolation theory. Specifically, the identity for interpolation spaces in \eqref{boundaryinterpolationspace} with $\theta = 1/2 - \mu$ gives us that $[L_2(\Gamma),H^{-1}(\Gamma)]_{1/2-\mu} = H^{\mu-1/2}(\Gamma)$. Interpolation of operators (see, for example \cite[Proposition 14.1.5]{BrennerScott_FEMbook}) tells us that $T$ maps $\calB$ into $H^{\mu-1/2}(\Gamma)$ with norm:
\[\|T\|_{\calB\rightarrow H^{\mu-1/2}(\Gamma)} \leq \|T\|_{\calB\rightarrow L_2(\Gamma)}^{1 - (1/2 - \mu)}\|T\|_{\calB\rightarrow H^{-1}(\Gamma)}^{(1/2 - \mu)} \leq  C h_\Gamma^{\tau-\mu}.\] 
This finishes the proof.
\end{proof}

Next, apply Proposition \ref{hodgeprop} to obtain $\RBFDFtangent=\vw_{\RBFDFtangent} + \nabla p_{\RBFDFtangent}$. Next we show that $P_{div}\RBFDFtangent$ approximates $\vw_{\RBFDFtangent}$. 

\begin{lemma}\label{lemma:projapprox}
Let $0\leq \mu\leq \tau$. For all $\vf\in \vH^\tau(\Omega)$ and $g\in H^{\tau-1/2}(\Gamma)$ we have
\[\|P_{div}\RBFDFtangent - \vw_{\RBFDFtangent}\|_{\vH^\mu(\Omega)} = \|P_{curl}\RBFDFtangent - \nabla p_{\RBFDFtangent}\|_{\vH^\mu(\Omega)}\leq C h_\Gamma^{\tau-\mu}\left(\| \vf\|_{\vH^{\tau}(\Omega)} + \|g\|_{H^{\tau - 1/2}(\Gamma)}\right).\]
\end{lemma}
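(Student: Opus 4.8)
The plan is to exploit the decomposition structure of $\RBFDFtangent$ and the orthogonality of $P_{div}$ and $P_{curl}$ to reduce the $\vH^\mu$ estimate to a boundary error already controlled by Lemma \ref{lemma:bdryapprox}. First I would observe that the equality $\|P_{div}\RBFDFtangent - \vw_{\RBFDFtangent}\|_{\vH^\mu(\Omega)} = \|P_{curl}\RBFDFtangent - \nabla p_{\RBFDFtangent}\|_{\vH^\mu(\Omega)}$ is immediate: since $\RBFDFtangent = P_{div}\RBFDFtangent + P_{curl}\RBFDFtangent$ and also $\RBFDFtangent = \vw_{\RBFDFtangent} + \nabla p_{\RBFDFtangent}$, subtracting gives $P_{div}\RBFDFtangent - \vw_{\RBFDFtangent} = -(P_{curl}\RBFDFtangent - \nabla p_{\RBFDFtangent})$, so the two quantities have identical norms. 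Hence it suffices to bound either one.

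The heart of the argument is to set $\ve := P_{div}\RBFDFtangent - \vw_{\RBFDFtangent}$ and analyze it through the equivalent norm $\vertiii{\cdot}_\vn$ from \eqref{equivsobnorm_normal} with $s = \mu$ (appealing to interpolation for the fractional cases, as elsewhere). The key point is that $\ve$ is a ``nice'' field whose curl, divergence, and normal trace are all controllable. Since $P_{div}\RBFDFtangent$ is divergence-free by construction and $\vw_{\RBFDFtangent}$ is divergence-free by Proposition \ref{hodgeprop}, we have $\nabla\cdot\ve = 0$. Likewise $\vcurl(\nabla p_{\RBFDFtangent}) = \vzero$ and $\vcurl(P_{curl}\RBFDFtangent) = \vzero$, so writing $\ve = -(P_{curl}\RBFDFtangent - \nabla p_{\RBFDFtangent})$ shows $\vcurl(\ve) = \vzero$ as well. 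Thus in the norm $\vertiii{\ve}_\vn$ the curl and divergence terms vanish entirely, leaving only the $\vL_2$ term and the normal-trace term $\|\ve\cdot\vn\|_{H^{\mu-1/2}(\Gamma)}$.

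Next I would identify the normal trace: on $\Gamma$ we have $\ve\cdot\vn = P_{div}\RBFDFtangent\cdot\vn - \vw_{\RBFDFtangent}\cdot\vn = P_{div}\RBFDFtangent\cdot\vn - g$, since $\vw_{\RBFDFtangent}\cdot\vn = g$ by Proposition \ref{hodgeprop}. This is precisely the quantity bounded in Lemma \ref{lemma:bdryapprox}, which yields $\|\ve\cdot\vn\|_{H^{\mu-1/2}(\Gamma)} \leq Ch_\Gamma^{\tau-\mu}(\|\vf\|_{\vH^\tau(\Omega)} + \|g\|_{H^{\tau-1/2}(\Gamma)})$. The remaining difficulty, and what I expect to be the main obstacle, is controlling the $\vL_2(\Omega)$ term of $\ve$ and tying it to the same $h_\Gamma$-power. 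The natural route is to recognize that a divergence-free, curl-free field on $\Omega$ is harmonic and is essentially determined by its normal boundary data, so the $\vL_2$ norm of $\ve$ should itself be bounded by a boundary norm of $\ve\cdot\vn$; combining this with the norm equivalence $\|\ve\|_{\vH^\mu(\Omega)}\sim\vertiii{\ve}_\vn$ then collapses everything onto the single boundary estimate. Making the $\vL_2$ control rigorous---likely via a Poincaré-type or continuity estimate for the harmonic extension problem, or by absorbing it into the equivalent-norm machinery---is the delicate step, after which the stated bound follows directly by assembling the pieces and invoking Lemma \ref{lemma:bdryapprox}.
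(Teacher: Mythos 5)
Your outline matches the paper's proof almost step for step: the equality of the two norms, reducing to $\ve = P_{div}\RBFDFtangent - \vw_{\RBFDFtangent}$, noting $\ve$ is both divergence-free and curl-free so that the equivalent norm $\vertiii{\cdot}_\vn$ collapses to the $\vL_2(\Omega)$ term plus the normal trace, identifying $\ve\cdot\vn = P_{div}\RBFDFtangent\cdot\vn - g$, and invoking Lemma \ref{lemma:bdryapprox}. However, the one step you explicitly leave open --- bounding $\|\ve\|_{\vL_2(\Omega)}$ by a boundary norm of $\ve\cdot\vn$ --- is a genuine gap in your write-up, and it is not optional: it is the entire content of the $\mu=0$ case (where \eqref{equivsobnorm_normal} is unavailable, since that equivalence requires $s\geq 1$), and it is also needed to absorb the $\vL_2$ term for $\mu\geq 1$. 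Gesturing at ``harmonic fields are determined by their normal data'' is the right instinct but is not a proof; in particular the constant and the correct boundary norm ($H^{-1/2}(\Gamma)$) have to come from somewhere.

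The paper closes this step with a tool you already have in hand. Since $P_{curl}\RBFDFtangent = \nabla \RBFpresstangent$ by \eqref{interpolantpotentials}, one can write $P_{div}\RBFDFtangent = \vw_{\RBFDFtangent} + \nabla(p_{\RBFDFtangent} - \RBFpresstangent)$, which is exactly the Proposition \ref{hodgeprop} decomposition of the field $P_{div}\RBFDFtangent$ with boundary data $g$. The stability estimate \eqref{NeumannSolutionContinuity} for that weak Neumann problem then gives
\[
\|\ve\|_{\vL_2(\Omega)} = \|\nabla(p_{\RBFDFtangent} - \RBFpresstangent)\|_{\vL_2(\Omega)} \leq C\left(\underbrace{\|\nabla\cdot P_{div}\RBFDFtangent\|_{L_2(\Omega)}}_{=0} + \|P_{div}\RBFDFtangent\cdot\vn - g\|_{H^{-1/2}(\Gamma)}\right),
\]
and Lemma \ref{lemma:bdryapprox} with $\mu=0$ bounds the right-hand side by $Ch_\Gamma^{\tau}(\|\vf\|_{\vH^\tau(\Omega)} + \|g\|_{H^{\tau-1/2}(\Gamma)})$. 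With that in place your assembly of the $\mu\geq 1$ case via $\vertiii{\cdot}_\vn$ goes through, and the intermediate $0<\mu<1$ range is obtained by operator interpolation, as you anticipated. So the architecture of your argument is correct; you need to replace the heuristic about harmonic extensions with the explicit appeal to \eqref{NeumannSolutionContinuity}.
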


\begin{proof}
The first equality follows easily from fact that $P_{div}\RBFDFtangent - \vw_{\RBFDFtangent} = \nabla p_{\RBFDFtangent} -  P_{curl}\RBFDFtangent$. For the rest, note that  $P_{curl}\RBFDFtangent = \nabla \RBFpresstangent$, where $\RBFpresstangent$ is from \eqref{interpolantpotentials}. It follows that 
\[P_{div}\RBFDFtangent = \vw_{\RBFDFtangent} + \nabla(p_{\RBFDFtangent} - \RBFpresstangent),\]
which is the decomposition in Proposition \ref{hodgeprop} applied to the function $\vf = P_{div}\RBFDFtangent$. Letting $\vv:=P_{div}\RBFDFtangent - \vw_{\RBFDFtangent}$, by \eqref{NeumannSolutionContinuity} we get the bound
\[
\|\vv\|_{\vL_2(\Omega)} = \|\nabla (p_{\RBFDFtangent} - \RBFpresstangent)\|_{\vL_2(\Omega)} \leq C \|P_{div}\RBFDFtangent\cdot\vn - g \|_{H^{-1/2}(\Gamma)}.
\]
An application of Lemma \ref{lemma:bdryapprox} finishes the proof for the $\mu=0$ case. For $\mu\geq 1$, we can use \eqref{equivsobnorm_normal} to get 
\[ \|\vv\|_{\vH^\mu(\Omega)}^2 \sim \vertiii{\vv}^2_\vn = \|\vv\|_{\vL_2(\Omega)}^2 + \|\vv\cdot\vn\|_{H^{\mu-1/2}(\Gamma)}^2,\]
where we used the fact that $\vv$ is divergence-free and curl-free. After applying the bound on $\|\vv\|_{\vL_2(\Omega)}$ above and the fact that $\vv\cdot \vn = P_{div}\RBFDFtangent\cdot\vn - g$, we get
\[\|\vv\|_{\vH^\mu(\Omega)} \leq C \|P_{div}\RBFDFtangent\cdot\vn - g\|_{H^{\mu-1/2}(\Gamma)}.\]
Another application of Lemma \ref{lemma:bdryapprox} finishes the proof for $1\leq \mu\leq \tau$. The $0<\mu<1$ case can be handled by interpolating the operator $T$ between the ranges $\vL_2(\Omega)$ and $\vH^1(\Omega)$, where $T$ is given by $T(\vf,g):= P_{div}\RBFDFtangent - \vw_{\RBFDFtangent}$ for $(\vf,g)\in \calB$.
\end{proof}

Now we are ready to prove one of our main results. 
\begin{theorem}\label{dftanthm}
Let $0\leq \mu\leq \tau$. Given $\vf\in \vH^{\tau}(\Omega)$ and admissible $g\in H^{\tau-1/2}(\Gamma)$, we denote the decomposition of $\vf$ from Proposition \ref{hodgeprop} as $\vf = \vw_\vf + \nabla p_\vf$. Then we have
\begin{eqnarray*}
\| P_{div}\RBFDFtangent - \vw_\vf\|_{\vH^\mu(\Omega)} \leq C \left(h_{\Omega}^{\tau-\mu} + h_\Gamma^{\tau-\mu}\right)\left(\| \vf\|_{\vH^{\tau}(\Omega)} +\|g\|_{H^{\tau - 1/2}(\Gamma)}\right).
\end{eqnarray*}
\end{theorem}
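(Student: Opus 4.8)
The plan is to bound $\|P_{div}\RBFDFtangent - \vw_\vf\|_{\vH^\mu(\Omega)}$ by inserting the intermediate quantity $\vw_{\RBFDFtangent}$ and invoking the triangle inequality:
\[
\|P_{div}\RBFDFtangent - \vw_\vf\|_{\vH^\mu(\Omega)} \leq \|P_{div}\RBFDFtangent - \vw_{\RBFDFtangent}\|_{\vH^\mu(\Omega)} + \|\vw_{\RBFDFtangent} - \vw_\vf\|_{\vH^\mu(\Omega)}.
\]
The first term is controlled directly by Lemma \ref{lemma:projapprox}, which yields the $h_\Gamma^{\tau-\mu}$ contribution. So the work is entirely in the second term, $\|\vw_{\RBFDFtangent} - \vw_\vf\|_{\vH^\mu(\Omega)}$, which measures how well the exact divergence-free part of the \emph{interpolant} $\RBFDFtangent$ matches the exact divergence-free part of the \emph{target} $\vf$.

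To bound that second term, the key observation is that $\vw_{\RBFDFtangent} - \vw_\vf$ is the divergence-free part of the Proposition \ref{hodgeprop} decomposition applied to the field $\vf - \RBFDFtangent$, since the map $\vf \mapsto \vw_\vf$ is linear and both decompositions use the same boundary datum $g$. First I would write the difference field $\ve := \RBFDFtangent - \vf$ and note that its divergence-free part (with boundary value $\vw_{\RBFDFtangent}\cdot\vn - \vw_\vf\cdot\vn = g - g = 0$ on $\Gamma$) is exactly $\vw_{\RBFDFtangent} - \vw_\vf$. Thus I can invoke the regularity/continuity of the Proposition \ref{hodgeprop} decomposition: the assignment taking a field to its divergence-free component is bounded, so
\[
\|\vw_{\RBFDFtangent} - \vw_\vf\|_{\vH^\mu(\Omega)} \leq C \|\RBFDFtangent - \vf\|_{\vH^\mu(\Omega)},
\]
where the boundary contribution vanishes because the two fields share the same normal trace $g$. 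The right-hand side is then bounded by Lemma \ref{lemma:fullapprox}, giving the $h_\Omega^{\tau-\mu}$ contribution. Combining the two pieces produces the stated $(h_\Omega^{\tau-\mu} + h_\Gamma^{\tau-\mu})$ rate.

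The main obstacle I anticipate is justifying the continuity estimate $\|\vw_{\RBFDFtangent} - \vw_\vf\|_{\vH^\mu(\Omega)} \leq C\|\RBFDFtangent - \vf\|_{\vH^\mu(\Omega)}$ uniformly in $\mu$ across the full range $0 \leq \mu \leq \tau$. Proposition \ref{hodgeprop} itself only delivers the $\vL_2$-level bound \eqref{NeumannSolutionContinuity}, so for $\mu \geq 1$ I would instead lean on the equivalent norm \eqref{equivsobnorm_normal}: since $\vw_{\RBFDFtangent} - \vw_\vf$ is divergence-free with zero normal trace, $\vertiii{\cdot}_\vn$ reduces to the curl term plus the $\vL_2$ term, and the curl of the difference equals the curl of $\RBFDFtangent - \vf$, which is controlled by $\|\RBFDFtangent - \vf\|_{\vH^\mu(\Omega)}$. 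The fractional and $0<\mu<1$ cases then close by the same operator-interpolation device used at the end of Lemma \ref{lemma:projapprox}, interpolating the linear map $(\vf,g) \mapsto \vw_{\RBFDFtangent} - \vw_\vf$ between its $\vL_2(\Omega)$ and $\vH^1(\Omega)$ mapping bounds. I expect the bookkeeping of ensuring the boundary term genuinely drops out at each regularity level to be the delicate part, but the structural argument is otherwise a clean combination of the three preceding lemmas.
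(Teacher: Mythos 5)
Your proposal is correct and follows essentially the same route as the paper: the same triangle inequality through the intermediate field $\vw_{\RBFDFtangent}$, Lemma \ref{lemma:projapprox} for the $h_\Gamma^{\tau-\mu}$ piece, a continuity bound $\|\vw_{\RBFDFtangent}-\vw_\vf\|_{\vH^\mu(\Omega)}\leq C\|\RBFDFtangent-\vf\|_{\vH^\mu(\Omega)}$ for the remaining piece, and Lemma \ref{lemma:fullapprox} to close. The only (harmless) deviation is in how that continuity bound is justified: the paper invokes Proposition \ref{hodgeprop_potentials} to write $\vw_{\RBFDFtangent}-\vw_\vf=\vcurl(\vpsi)$ with $\|\vpsi\|_{\vH^{\mu+1}(\Omega)}\leq C\|\RBFDFtangent-\vf\|_{\vH^{\mu}(\Omega)}$, whereas you argue directly via the equivalent norm \eqref{equivsobnorm_normal} together with the $\vL_2$-orthogonality of the $g=0$ decomposition, which works equally well.
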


\begin{proof}
 We begin with a triangle inequality and an application of Lemma \ref{lemma:projapprox}:
\begin{eqnarray*}
\|P_{div}\RBFDFtangent - \vw_\vf\|_{\vH^\mu(\Omega)} & \leq &\|\vw_{\RBFDFtangent} - \vw_\vf\|_{\vH^\mu(\Omega)} +  C h_\Gamma^{\tau-\mu}\left(\| \vf\|_{\vH^{\tau}(\Omega)} + \|g\|_{\vH^{\tau - 1/2}(\Gamma)}\right).
\end{eqnarray*}
Next we bound $\|\vw_{\RBFDFtangent} - \vw_\vf\|_{\vH^\mu(\Omega)}$. Note that $\RBFDFtangent - \vf = (\vw_{\RBFDFtangent} - \vw_\vf) + \nabla (p_{\RBFDFtangent} - p_\vf)$ decomposes $\RBFDFtangent - \vf$ as in Proposition \ref{hodgeprop} with $g = 0$. Applying Proposition \ref{hodgeprop_potentials} to $\vf - \RBFDFtangent$, we get that $\vw_{\RBFDFtangent} - \vw_\vf = \vcurl(\vpsi)$ with $\vpsi$ satisfying \eqref{potentialregularity}, which yields:
\[\|\vw_{\RBFDFtangent} - \vw_\vf\|_{\vH^\mu(\Omega)} = \|\vcurl(\vpsi)\|_{\vH^{\mu+1}(\Omega)} \leq C \|\RBFDFtangent - \vf\|_{\vH^\mu(\Omega)}.\]
An application of Lemma \ref{lemma:fullapprox} finishes the proof.
\end{proof}

Since $P_{curl}\RBFDFtangent - \nabla p_\vf = \RBFDFtangent - \vf + \vw_\vf - P_{div}\RBFDFtangent$, similar estimates hold for the curl-free part.

\subsection{Convergence with Curl-free Boundary Conditions}\label{cferrorsection}

Now we focus on the decomposition in Proposition \ref{hodgeprop2}. Recall that there is a projector $\Pcfn$ that projects $\vf$ onto the curl-free term in this decomposition, and that $\RBFCFnormal$ denotes the kernel interpolant from Section \ref{cfdecomp} whose tangential components of $P_{curl}\RBFCFnormal$ are forced to vanish on the node set $Y\subset \Gamma$. Showing that $P_{curl}\RBFCFnormal$ approximates $\Pcfn\vf$ uses arguments similar to those in the preceeding section, thus we provide only the aspects of the proof that are significantly different

First, we have a lemma, whose proof we omit since the arguments are similar to those of Lemma \ref{lemma:fullapprox} - the most major difference here is that the proof requires an extension $E$ so that $\RBFCFnormal = \vs^\vn_{E\vf}$, and such an extension exists by Lemma 1 and the remark proceeding it. 
\begin{lemma}\label{lemma:cffullapprox}
Let $\mu$ satisfy $0\leq \mu\leq \tau$. Then for all $\vf\in \vH^\tau(\Omega)$ we have
\[ \|\vf - \RBFCFnormal\|_{\vH^\mu(\Omega)} \leq C h_{\Omega}^{\tau-\mu}\| \vf\|_{\vH^\tau(\Omega)}.\]
\end{lemma}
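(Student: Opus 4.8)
The plan is to mirror the proof of Lemma \ref{lemma:fullapprox} almost verbatim, replacing the divergence-free boundary interpolant $\RBFDFtangent$ with the curl-free boundary interpolant $\RBFCFnormal$ from Section \ref{cfdecomp}, and replacing the extension from Lemma \ref{lerayextensions} with the analogous extension for the decomposition in Proposition \ref{hodgeprop2}. First I would invoke Proposition \ref{prop:zeroslemma}: since $\vf - \RBFCFnormal$ vanishes on the node set $X$ (full interpolation is still enforced on $X$ in the construction \eqref{eq:curlfreenormalform}), applying the zeros lemma component-wise gives
\[
\|\vf - \RBFCFnormal\|_{\vH^\mu(\Omega)} \leq C h_{\Omega}^{\tau-\mu}\|\vf - \RBFCFnormal\|_{\vH^\tau(\Omega)},
\]
so the task reduces to bounding the right-hand factor by $C\|\vf\|_{\vH^\tau(\Omega)}$.

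The key step is the identification $\RBFCFnormal = \vs^\vn_{E\vf}$ for an appropriate extension $E$. As flagged in the statement, the extension I need is the one guaranteed by the remark following Lemma \ref{lerayextensions}, namely a continuous $E:\vH^\tau(\Omega)\rightarrow \vtH^\tau(\R^d)$ satisfying $E\vf|_\Omega = \vf$, $P_{div}E\vf|_\Omega = \vw$, and $P_{curl}E\vf|_\Omega = \nabla p$ for the decomposition of Proposition \ref{hodgeprop2}, with $\|E\vf\|_{\vtH^\tau(\R^d)}\leq C\|\vf\|_{\vH^\tau(\Omega)}$ (note there is no boundary datum $g$ here, so the bound is cleaner). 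The point is that the linear system \eqref{cfsystem} determining $\RBFCFnormal$ depends only on $\vf|_X$ and the homogeneous boundary constraint $\vt^T_{y_j}P_{curl}\RBFCFnormal(y_j)=0$; because $E\vf$ interpolates $\vf$ on $X$ and its curl-free part is $\nabla p$ with $p$ constant on each boundary component (hence $\nabla p$ normal to $\Gamma$, so its tangential components vanish identically, in particular on $Y$), the field $E\vf$ and its generalized interpolant $\vs^\vn_{E\vf}$ satisfy the identical data, forcing $\RBFCFnormal = \vs^\vn_{E\vf}$ by uniqueness.

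With this identification in hand the remainder is a chain of continuous embeddings exactly as in Lemma \ref{lemma:fullapprox}:
\begin{eqnarray*}
\|\vf - \RBFCFnormal\|_{\vH^\tau(\Omega)} &=& \|E\vf - \vs^\vn_{E\vf}\|_{\vH^\tau(\Omega)} \leq \|E\vf - \vs^\vn_{E\vf}\|_{\vtH^\tau(\R^d)} \\
&\leq& C\|E\vf - \vs^\vn_{E\vf}\|_{\calN_\Phi(\R^d)} \leq C\|E\vf\|_{\calN_\Phi(\R^d)} \leq C\|\vf\|_{\vH^\tau(\Omega)},
\end{eqnarray*}
using that $\vtH^\tau(\R^d)$ is norm-equivalent to $\calN_\Phi(\R^d)$, the best-approximation bound \eqref{bestapproxgeneralinterp}, and continuity of $E$. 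Combining with the zeros-lemma estimate yields the claim.

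The main obstacle I anticipate is not any of the embedding estimates, which are routine, but verifying rigorously that the boundary constraint is compatible with the extension---i.e. that $P_{curl}E\vf = \nabla p$ really does have vanishing tangential trace at the nodes $Y$ so that $E\vf$ genuinely produces zero boundary data in \eqref{cfsystem}. This hinges on the characterization that $\nabla p$ is normal to $\Gamma$ precisely because $p\in H^1_c(\Omega)$ is constant on each connected component $\Gamma_i$ (Proposition \ref{hodgeprop2}), together with the fact that the extension $E$ is built from the potentials via $E\vf = \vcurl(\mathfrak{E}\vpsi) + \nabla\mathfrak{E}p$ as in Lemma \ref{lerayextensions}; one must confirm that the tangential components of the curl-free part of $E\vf$ restricted to $\Omega$ agree with those of $\nabla p$ on $\Gamma$, which is exactly what $P_{curl}E\vf|_\Omega = \nabla p$ provides. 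Once this compatibility is established, uniqueness of the generalized interpolant closes the argument.
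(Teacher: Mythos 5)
Your proposal is correct and follows exactly the route the paper intends: the paper omits this proof, noting only that it mirrors Lemma \ref{lemma:fullapprox} with the extension $E$ for the Proposition \ref{hodgeprop2} decomposition (from the remark after Lemma \ref{lerayextensions}) so that $\RBFCFnormal = \vs^\vn_{E\vf}$. Your additional verification that the homogeneous boundary data is compatible --- because $\nabla p$ is normal to $\Gamma$ when $p\in H^1_c(\Omega)$ --- is precisely the detail the paper leaves implicit.
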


Next we have a lemma analogous to Lemma \ref{lemma:projapprox}. 

\begin{lemma}
Let $0\leq \mu\leq \tau$. Then for all $\vf\in \vH^\tau(\Omega)$ we have
\[\|\Pcfn \RBFCFnormal - P_{curl}\RBFCFnormal\|_{\vH^\mu(\Omega)} \leq C h_{\Gamma}^{\tau-\mu} \|\vf\|_{\vH^\tau(\Omega)}.\]
\end{lemma}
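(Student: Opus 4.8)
The plan is to mirror the structure of Lemma \ref{lemma:projapprox}, replacing the Neumann-type estimate \eqref{NeumannSolutionContinuity} from Proposition \ref{hodgeprop} with the orthogonal-decomposition machinery of Proposition \ref{hodgeprop2}, and replacing the normal boundary data $g$ with the (homogeneous) tangential data. First I would observe that the kernel construction gives $P_{curl}\RBFCFnormal = \nabla \RBFpressnormal$ for the velocity potential $\RBFpressnormal$ available from \eqref{customizedkerneldef}. Writing $\vf = \RBFCFnormal$ and applying Proposition \ref{hodgeprop2} to $\RBFCFnormal$ yields a decomposition $\RBFCFnormal = \vw_{\RBFCFnormal} + \nabla p_{\RBFCFnormal}$ with $\nabla p_{\RBFCFnormal} = \Pcfn \RBFCFnormal$ normal to $\Gamma$; subtracting shows that $\vv := \Pcfn \RBFCFnormal - P_{curl}\RBFCFnormal = \nabla(p_{\RBFCFnormal} - \RBFpressnormal)$ is a gradient field that is simultaneously curl-free and divergence-free.

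The key quantitative input is a boundary estimate analogous to Lemma \ref{lemma:bdryapprox}: since by construction the tangential component $\vt^T P_{curl}\RBFCFnormal$ vanishes on the node set $Y\subset\Gamma$, one bounds $\|\vt^T P_{curl}\RBFCFnormal\|_{H^{\mu-1/2}(\Gamma)}$ using the boundary zeros estimate \eqref{eq:zerosbdry} for $\mu\geq 1/2$, the Trace Theorem together with $\RBFCFnormal = \vs^\vn_{E\vf}$ and the native-space bounds \eqref{bestapproxgeneralinterp} to control $\|P_{curl}\RBFCFnormal\|_{\vH^{\tau-1/2}(\Gamma)}$, and then the interpolation-of-operators argument between $L_2(\Gamma)$ and $H^{-1}(\Gamma)$ (via \eqref{boundaryzerosHneg1} and \eqref{boundaryinterpolationspace}) to descend to $0\leq\mu\leq 1/2$. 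This gives $\|\vt^T P_{curl}\RBFCFnormal\|_{H^{\mu-1/2}(\Gamma)} \leq C h_\Gamma^{\tau-\mu}\|\vf\|_{\vH^\tau(\Omega)}$. I would state and prove this as a preliminary step, since it plays the role that Lemma \ref{lemma:bdryapprox} played in the divergence-free case.

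With this in hand, I would estimate $\|\vv\|_{\vH^\mu(\Omega)}$ in three stages exactly as in Lemma \ref{lemma:projapprox}. For the $\mu=0$ case, because $\vv$ is a gradient of a function in $H^1_c(\Omega)$ whose boundary trace deviation from ``constant on each $\Gamma_i$'' is measured by the tangential component, I would invoke a continuity bound from Proposition \ref{hodgeprop2} controlling $\|\vv\|_{\vL_2(\Omega)}$ by $\|\vt^T P_{curl}\RBFCFnormal\|_{H^{-1/2}(\Gamma)}$. For $\mu\geq 1$, I would use the equivalent norm \eqref{equivsobnorm_tan}: since $\vv$ is both divergence-free and curl-free, $\vertiii{\vv}^2_\vt$ collapses to $\|\vv\|_{\vL_2(\Omega)}^2 + \|\vv\times\vn\|_{\vH^{\mu-1/2}(\Gamma)}^2$, and the tangential trace $\vv\times\vn$ is precisely the tangential component of $\nabla(p_{\RBFCFnormal} - \RBFpressnormal)$, which equals $-\vt^T P_{curl}\RBFCFnormal$ up to the vanishing tangential part of $\Pcfn\RBFCFnormal$. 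The intermediate range $0<\mu<1$ follows by interpolating the operator $(\vf)\mapsto \vv$ between $\vL_2(\Omega)$ and $\vH^1(\Omega)$, as in Lemma \ref{lemma:projapprox}.

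The main obstacle I anticipate is the analogue of the $\mu=0$ continuity estimate. In the divergence-free case this came for free from the explicit Lax–Milgram bound \eqref{NeumannSolutionContinuity} in Proposition \ref{hodgeprop}, but Proposition \ref{hodgeprop2} as stated only asserts existence, uniqueness, and orthogonality of the decomposition without an accompanying data-continuity inequality in terms of the tangential boundary deviation. I would need to supply the correct continuity statement---that the $\vL_2(\Omega)$ norm of the curl-free-and-divergence-free field $\vv$ is controlled by its tangential boundary component in $\vH^{-1/2}(\Gamma)$---most likely by a weak-formulation argument over $H^1_c(\Omega)$ parallel to the proof of Proposition \ref{hodgeprop}, or by appealing directly to the relevant corollary in \cite{DautrayLion_vol3}. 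Getting the correct boundary norm ($\vH^{-1/2}(\Gamma)$ for the tangential trace) to match the equivalent-norm identity \eqref{equivsobnorm_tan} and the zeros estimates is where the bookkeeping will be most delicate.
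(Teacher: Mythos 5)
Your overall architecture matches the paper's: reduce everything to a bound on the tangential trace of $P_{curl}\RBFCFnormal$ on $\Gamma$, prove that boundary bound by repeating the zeros-lemma/trace/native-space/operator-interpolation argument of Lemma \ref{lemma:bdryapprox}, handle $\mu\geq 1$ with the equivalent norm \eqref{equivsobnorm_tan} using that $\vv:=\Pcfn\RBFCFnormal - P_{curl}\RBFCFnormal$ is both divergence-free and curl-free, and fill in $0<\mu<1$ by operator interpolation. However, the gap you flag at the end --- the $\mu=0$ continuity estimate $\|\vv\|_{\vL_2(\Omega)}\leq C\|\gamma_\vt\vv\|_{\vH^{-1/2}(\Gamma)}$ --- is a genuine missing step, and the repair you sketch (a weak formulation over $H^1_c(\Omega)$ ``parallel to the proof of Proposition \ref{hodgeprop}'') is not the right tool. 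Testing the divergence-free condition against scalar test functions only produces the \emph{normal} trace $\vv\cdot\vn$ through the divergence Green's formula; that is why \eqref{NeumannSolutionContinuity} controls things by $\vf\cdot\vn - g$ in the divergence-free-boundary-condition case. No choice of scalar test space will make the \emph{tangential} trace appear in that pairing, so the analogy breaks exactly at the point where you need it.

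The paper instead uses a duality argument against a \emph{vector} potential: from the identity $\vv = P_{div}\RBFCFnormal - \Pcfn^{\perp}\RBFCFnormal$ one sees $\vv\in\vcurl(\vH^1(\Omega))$, so Proposition \ref{boundedvelpot} supplies $\vpsi$ with $\vv=\vcurl(\vpsi)$ and $\|\vpsi\|_{\vH^1(\Omega)}\leq C\|\vv\|_{\vL_2(\Omega)}$. The Green's formula for the tangential trace, $(\vcurl(\vv),\vg)-(\vv,\vcurl(\vg))=\langle\gamma_\vt\vv,\vg\rangle$, applied with $\vg=\vpsi$ and $\vcurl(\vv)=\vzero$ then gives
\[
\|\vv\|_{\vL_2(\Omega)}^2 = |\langle\gamma_\vt\vv,\vpsi\rangle| \leq \|\gamma_\vt\vv\|_{\vH^{-1/2}(\Gamma)}\,\|\vpsi\|_{\vH^1(\Omega)} \leq C\,\|\gamma_\vt\vv\|_{\vH^{-1/2}(\Gamma)}\,\|\vv\|_{\vL_2(\Omega)},
\]
and dividing by $\|\vv\|_{\vL_2(\Omega)}$ yields the $\mu=0$ bound, since $\gamma_\vt\Pcfn\RBFCFnormal=\vzero$ makes $\gamma_\vt\vv=-\gamma_\vt P_{curl}\RBFCFnormal$. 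So the missing ingredient is precisely the pairing of the curl-free condition with the stream function $\vpsi$ of Proposition \ref{boundedvelpot}, not a Lax--Milgram argument over $H^1_c(\Omega)$; once you adopt that, the rest of your outline goes through as written.
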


\begin{proof}
We will use the tangential trace operator $\gamma_\vt$, which is defined on smooth vector fields as $\gamma_\vt \vv:= \vv|_\Gamma \times \vn$. By \cite[Theorem 2.11, page 34]{GiraultRaviart1986}, this extends to a continuous map defined on $\vL_2(\Omega)$ vector fields with bounded curl (in $\vL_2$) to the space $\vH^{-1/2}(\Gamma)$, and the following Green's formula holds:
\begin{equation}\label{tangentialgreens}
(\vcurl(\vv),\vg) - (\vv,\vcurl(\vg)) = \langle \gamma_\vt \vv,\vg\rangle \quad \forall\, \vg\in \vH^1(\Omega).
\end{equation}

The first step is to transfer the problem to the boundary by showing that
\begin{equation}\label{bdrytransfer}
\|\Pcfn \RBFCFnormal - P_{curl}\RBFCFnormal \|_{\vH^\mu(\Omega)}  \leq C\|\gamma_\vt P_{curl}\RBFCFnormal\|_{\vH^{\mu-1/2}(\Gamma)}.
\end{equation}
For brevity, we let $\vv =\Pcfn \RBFCFnormal - P_{curl}\RBFCFnormal$. The identity 
\[
\Pcfn \RBFCFnormal - P_{curl}\RBFCFnormal = P_{div} \RBFCFnormal - \Pcfn^\perp \RBFCFnormal
\] 
implies that $\vv \in \vcurl(\vH^1(\Omega))$, so by Proposition \ref{boundedvelpot} $\vv$ has a potential $\vpsi$ satisfying 
\[\|\vpsi\|_{\vH^1(\Omega)} \leq C \|\vv\|_{\vL_2(\Omega)}.\]
With this, we can apply \eqref{tangentialgreens} with $\vg = \vpsi$ to get the inequality
\begin{eqnarray*}
\|\vv\|^2_{\vL_2(\Omega)}& =& |\langle \gamma_\vt \vv, \vpsi\rangle |\leq \|\gamma_\vt \vv\|_{\vH^{-1/2}(\Gamma)} \|\vpsi\|_{\vH^{1/2}(\Gamma)}\\
&\leq& \|\gamma_\vt \vv\|_{\vH^{-1/2}(\Gamma)} \|\vpsi\|_{\vH^{1}(\Omega)} \leq C\|\gamma_\vt \vv\|_{\vH^{-1/2}(\Gamma)} \|\vv\|_{\vL_2(\Gamma)}.
\end{eqnarray*}
Since $\gamma_\vt\Pcfn\RBFCFnormal = \mathbf{0}$, we obtain \eqref{bdrytransfer} when $\mu=0$:
\begin{equation}\label{vbound}
\|\vv \|_{\vL_2(\Omega)} \leq C\|\gamma_\vt P_{curl}\RBFCFnormal\|_{\vH^{-1/2}(\Gamma)}.
\end{equation}
For $\mu\geq 1$, we use \eqref{equivsobnorm_tan}. Using \eqref{vbound} and the fact that $\vv$ is both divergence-free and curl-free, we get 
\[
\|\vv\|_{\vH^\mu(\Omega)}^2 \leq C \vertiii{\vu}^2_\vt =C (\|\vv\|_{\vL_2(\Omega)}^2 + \|\gamma_\vt\vv\|_{\vH^{\mu-1/2}(\Gamma)}^2) \leq C\|\gamma_\vt \vv\|_{\vH^{\mu-1/2}(\Gamma)}^2 = C\|\gamma_\vt P_{curl}\RBFCFnormal\|_{\vH^{\mu-1/2}(\Gamma)}^2.
\]
This proves \eqref{bdrytransfer} for $\mu=0$ and $1\leq \mu \leq \tau$. By design $\gamma_\vt P_{curl}\RBFCFnormal$ has many zeros on $\Gamma$, which makes this situation very similar to that in Lemma \ref{lemma:bdryapprox}, whose arguments can be repeated to arrive at the bound
\[
\|\gamma_\vt P_{curl}\RBFCFnormal\|_{\vH^{\mu-1/2}(\Gamma)}\leq C h_\Gamma^{\tau-\mu}\|\vf\|_{\vH^\tau(\Omega)}.
\]
The case $0<\mu<1$ can now be handled by operator interpolation. This finishes the proof.
\end{proof}

With these results, one can now construct an argument very similar to the proof of Theorem \ref{dftanthm} to arrive at the result below, which we state without proof. 
\begin{theorem}\label{cfnthm}
Let $0\leq \mu\leq \tau$. Then for all $\vf \in \vH^\tau(\Omega)$ we have
\begin{eqnarray*}
\|\Pcfn\vf - P_{curl}\RBFCFnormal\|_{\vH^\mu(\Omega)} \leq C \left(h_{\Omega}^{\tau-\mu} + h_\Gamma^{\tau-\mu}\right)\| \vf\|_{\vH^{\tau}(\Omega)}.
\end{eqnarray*}
\end{theorem}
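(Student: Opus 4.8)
The plan is to mirror exactly the three-step structure used to establish Theorem \ref{dftanthm}, but for the curl-free decomposition of Proposition \ref{hodgeprop2}. The two preceding lemmas in this subsection (Lemma \ref{lemma:cffullapprox} and the unnumbered projection lemma) supply precisely the analogues of Lemma \ref{lemma:fullapprox} and Lemma \ref{lemma:projapprox} that drove the earlier proof, so the argument should transfer almost verbatim once the potential-based bound is set up correctly.

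First I would apply Proposition \ref{hodgeprop2} to the \emph{interpolant} $\RBFCFnormal$ itself, writing $\RBFCFnormal = \vw_{\RBFCFnormal} + \nabla p_{\RBFCFnormal}$, so that $\Pcfn\RBFCFnormal = \nabla p_{\RBFCFnormal}$. The unnumbered projection lemma already controls $\|\Pcfn\RBFCFnormal - P_{curl}\RBFCFnormal\|_{\vH^\mu(\Omega)}$ by $Ch_\Gamma^{\tau-\mu}\|\vf\|_{\vH^\tau(\Omega)}$, so a triangle inequality reduces the theorem to bounding $\|\Pcfn\vf - \Pcfn\RBFCFnormal\|_{\vH^\mu(\Omega)}$. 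Since $\Pcfn$ is the orthogonal projector from Proposition \ref{hodgeprop2}, this difference is $\Pcfn(\vf - \RBFCFnormal)$, the curl-free part of $\vf - \RBFCFnormal$ under that decomposition.

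The crux is therefore to bound $\|\Pcfn(\vf - \RBFCFnormal)\|_{\vH^\mu(\Omega)}$ by $\|\vf - \RBFCFnormal\|_{\vH^\mu(\Omega)}$, after which Lemma \ref{lemma:cffullapprox} supplies the $h_\Omega^{\tau-\mu}$ rate directly. In the proof of Theorem \ref{dftanthm} this step was handled by writing the divergence-free part as $\vcurl(\vpsi)$ and invoking the potential regularity estimate \eqref{potentialregularity} from Proposition \ref{hodgeprop_potentials}, which gains a derivative: $\|\vcurl(\vpsi)\|_{\vH^\mu} = \|\vpsi\|_{\vH^{\mu+1}} \lesssim \|\vf - \RBFCFnormal\|_{\vH^\mu}$. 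The remark at the end of the subsection containing Proposition \ref{hodgeprop_potentials} states that similar bounds (with $g=0$) hold for the decomposition in Proposition \ref{hodgeprop2}, so I would apply that analogue to the curl-free term $\nabla(p_{\vf} - p_{\RBFCFnormal}) = \Pcfn(\vf - \RBFCFnormal)$, writing it as a gradient whose scalar potential $p_{\vf} - p_{\RBFCFnormal}$ gains one order of Sobolev regularity over $\vf - \RBFCFnormal$. Combining the gradient-potential estimate with Lemma \ref{lemma:cffullapprox} yields $\|\Pcfn\vf - \Pcfn\RBFCFnormal\|_{\vH^\mu(\Omega)} \leq Ch_\Omega^{\tau-\mu}\|\vf\|_{\vH^\tau(\Omega)}$, and adding the $h_\Gamma^{\tau-\mu}$ term from the projection lemma gives the combined rate $(h_\Omega^{\tau-\mu} + h_\Gamma^{\tau-\mu})$ claimed in the statement.

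I expect the main obstacle to be verifying that the regularity estimate for the scalar potential in the Proposition \ref{hodgeprop2} decomposition genuinely matches the form in \eqref{potentialregularity}, since that display is stated explicitly for Proposition \ref{hodgeprop} and only asserted to hold \emph{mutatis mutandis} (with $g=0$) for Proposition \ref{hodgeprop2}. One must confirm that applying this decomposition to the difference $\vf - \RBFCFnormal$ is legitimate for all $\vf \in \vH^\tau(\Omega)$ and that the constant is independent of the node sets; but because $\Pcfn$ is an orthogonal projection onto a fixed closed subspace and the potential assignment is a continuous map by the closed-graph argument used in Proposition \ref{hodgeprop_potentials}, the bound carries over without difficulty, and the remaining computation is routine.
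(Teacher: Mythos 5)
Your proposal is correct and follows exactly the route the paper intends: the paper states this theorem without proof, noting only that the argument is ``very similar to the proof of Theorem \ref{dftanthm},'' and your three-step argument (triangle inequality via the unnumbered projection lemma, reduction to $\Pcfn(\vf - \RBFCFnormal)$ by linearity of the projector, and the one-derivative gain from the scalar potential bound in Proposition \ref{hodgeprop_potentials} applied to the Proposition \ref{hodgeprop2} decomposition with $g=0$, followed by Lemma \ref{lemma:cffullapprox}) is precisely that omitted argument. The concern you raise about the potential regularity transferring to the Proposition \ref{hodgeprop2} case is resolved by the explicit assertion at the end of Proposition \ref{hodgeprop_potentials}, so no gap remains.
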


\begin{remark}
We heavily relied on the fact that given $\vf\in \vH^{\tau}(\Omega)$, we are guaranteed potential functions having the appropriate smoothness (assuming $\Gamma$ is smooth enough). We are not aware of such a result for functions in native spaces associated with $C^\infty$ kernels, even for very smooth domains. However, convergence results for the decompositions treated here can be derived for $C^\infty$ kernels, assuming that all potentials (or their components) reside within $\mathcal{N}_\phi$, where $\Phi = -\Delta\phi$. 
\end{remark}

\section{Numerical Examples}\label{numerics}

In this section we illustrate the methods described previously with numerical experiments. We start with the following target function:
\begin{equation}\label{targetfun}
\vf = \vcurl(\cos(2(x^2 + y^2))) + \nabla p,
\end{equation}
where $p$ is the \texttt{MATLAB} \emph{peaks} function, and consider $\vf$ on the annulus $\Omega$ centered at the origin with inner radius $.75$ and outer radius $2$ (see Figure \ref{Targetannulus}). This function on $\Omega$ has the property that the Leray projection, $P_L\vf$, is equal to $\vcurl(\cos(2(x^2 + y^2)))$, and in what follows we will compare $P_L\vf$ to $P_{div}\RBFDFtangent$.
\begin{figure}
\centering
\begin{tabular}{cc}
\subfigure[Target Field on $\Omega$]{\includegraphics[width=0.37\textwidth]{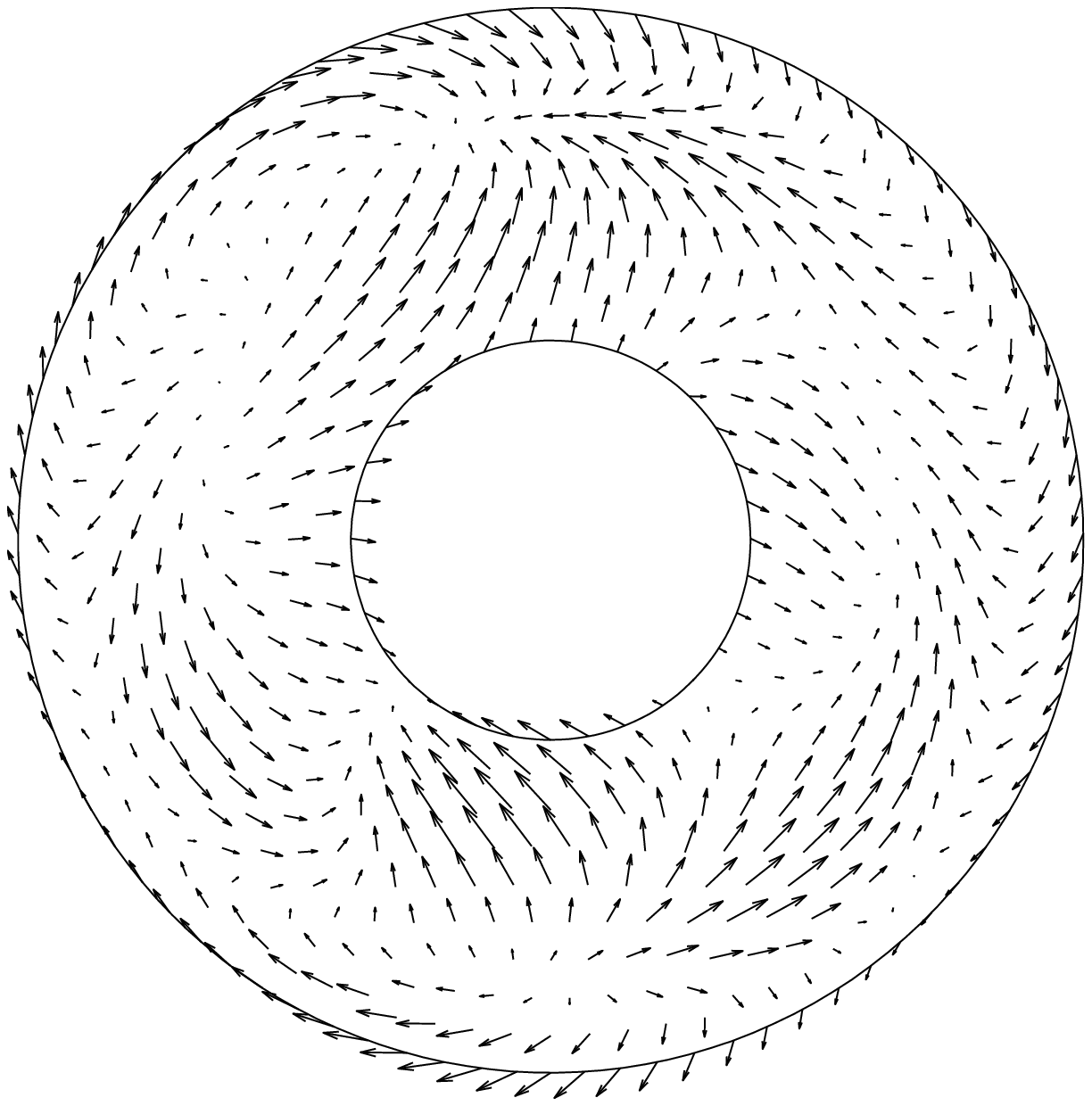}\label{Targetannulus}} &
\subfigure[Example Node Layout]{\includegraphics[width=0.37\textwidth]{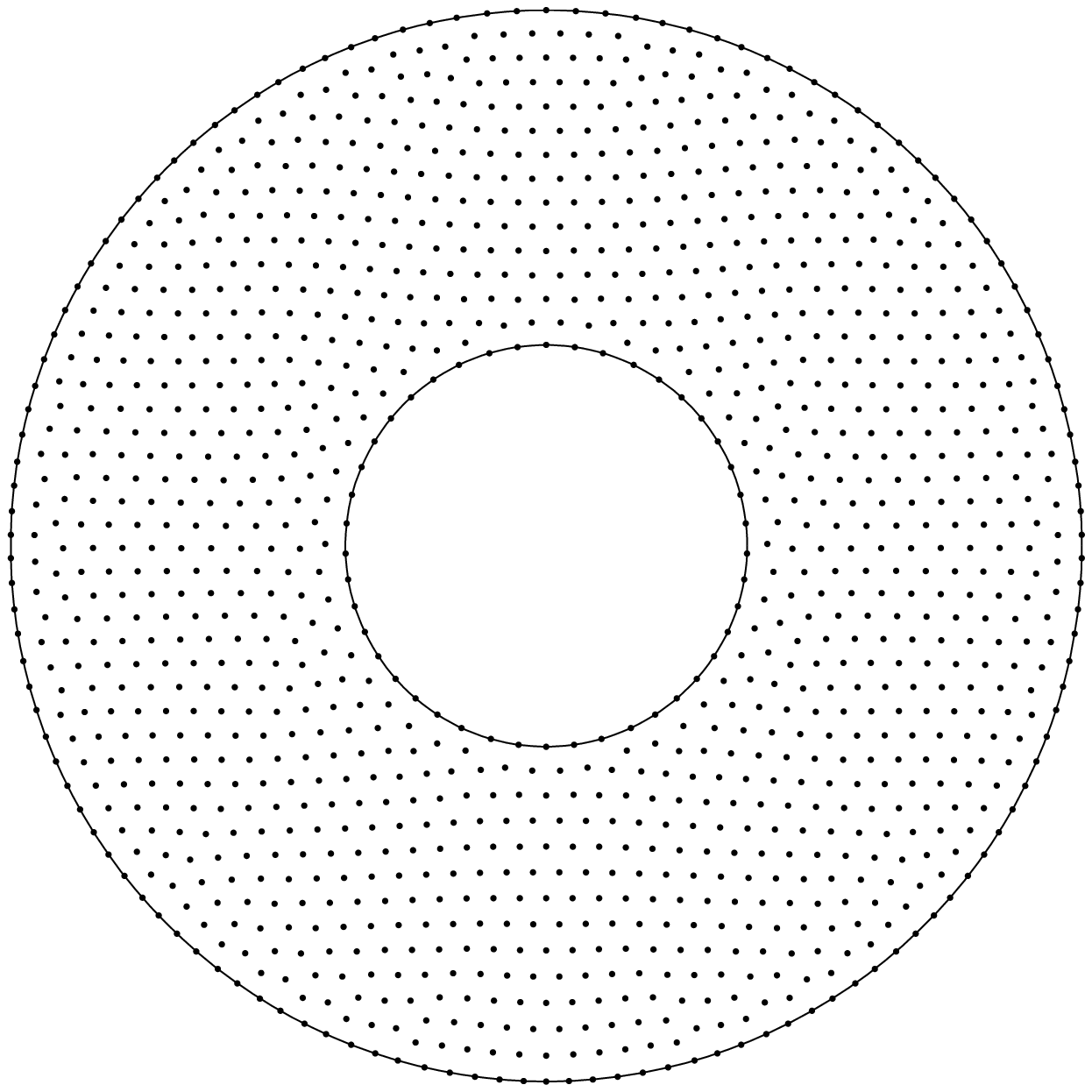}\label{annulusnodes}}  
\end{tabular}
\caption{The domain and target field $\vf$ used in the first experiment.}
\end{figure}
We used the freely available \texttt{distmesh} package to generate quasi-uniformly spaced nodes on $\Omega$ \cite{Persson_distmeshpaper} for the experiments. Eight nodes sets were generated with the number of full-interpolation centers ranging from $N=615$ to $N=11210$, and the number boundary centers ranging in cardinality from $M=115$ to $M=521$. An example node set with $N=1276$ is pictured in Figure \ref{annulusnodes}. In every experiment, we enforced full-interpolation at all centers, including the boundary sites. \texttt{MATLAB} files containing the nodes used and other useful files can be downloaded from \cite{FuselierHHDdata}. To generate our matrix-valued kernels, we used the scalar Mat\'ern kernel $\phi$ given by
\[\phi(r) = \frac{1}{945}e^{-r}(r^5 + 15r^4 + 105r^3 + 420 r^2 + 945r + 945),\]
where $r=r(x,y)=\epsilon \sqrt{x^2 + y^2}$. The free parameter $\epsilon$, known as the shape parameter, affects the stability and accuracy of the method. The shape parameter remained fixed at $\epsilon = 5$ throughout our experiments, which kept the computations relatively stable. The two dimensional version of this kernel, $\phi(\sqrt{x^2 + y^2})$, satisfies $\widehat{\phi}(\omega) = C (1 + |\omega|^2)^{-13/2}$, where $C$ is a constant, which means in particular that the matrix kernel $\Phi$ satisfies \eqref{algebraicdecay} with $\tau = 5.5$. 

We measured the relative error $\|P_{div}\RBFDFtangent - P_L\vf \|_{\ell_2(X)}/\|P_L\vf\|_{\ell_2(X)}$, where $X$ is the finest node set of those described above (i.e. with $\# X = 11210$) and the norm is given by
\[\|\vg\|_{\ell_2(X)} = \sqrt{\frac{1}{\# X}\sum_{x_j\in X} |\vg(x_j)|^2}.\]
The error between the generalized interpolant $\RBFDFtangent$ and $\vf$ was recorded similarly.
Lemma \ref{lemma:fullapprox} and Theorem \ref{dftanthm} dictate that the $L_2(\Omega)$ errors should all decay like $\calO(h^{5.5})$. Since our nodes are very uniform, $\|\cdot\|_{\ell_2(X)}\sim \|\cdot\|_{L_2(\Omega)}$, so observing $\calO(h^{5.5})$ would confirm these results. Due to the quasi-uniformity of the nodes, the mesh norm $h$ behaves asymptotically like $1/\sqrt{N}$, where $N$ is the number of nodes in a given node set. A loglog plot of error versus $1/\sqrt{N}$ is given in Figure \ref{convergence}, where it can be seen that the error for the Leray projection appears to converge slightly faster than $\calO(h^{5.5})$.
\begin{figure}
\centering
\begin{tabular}{ccc}
\subfigure[$P_{div}\RBFDFtangent = \vcurl(\RBFstreamtangent)$ (Leray Projection)]{\includegraphics[width=0.37\textwidth]{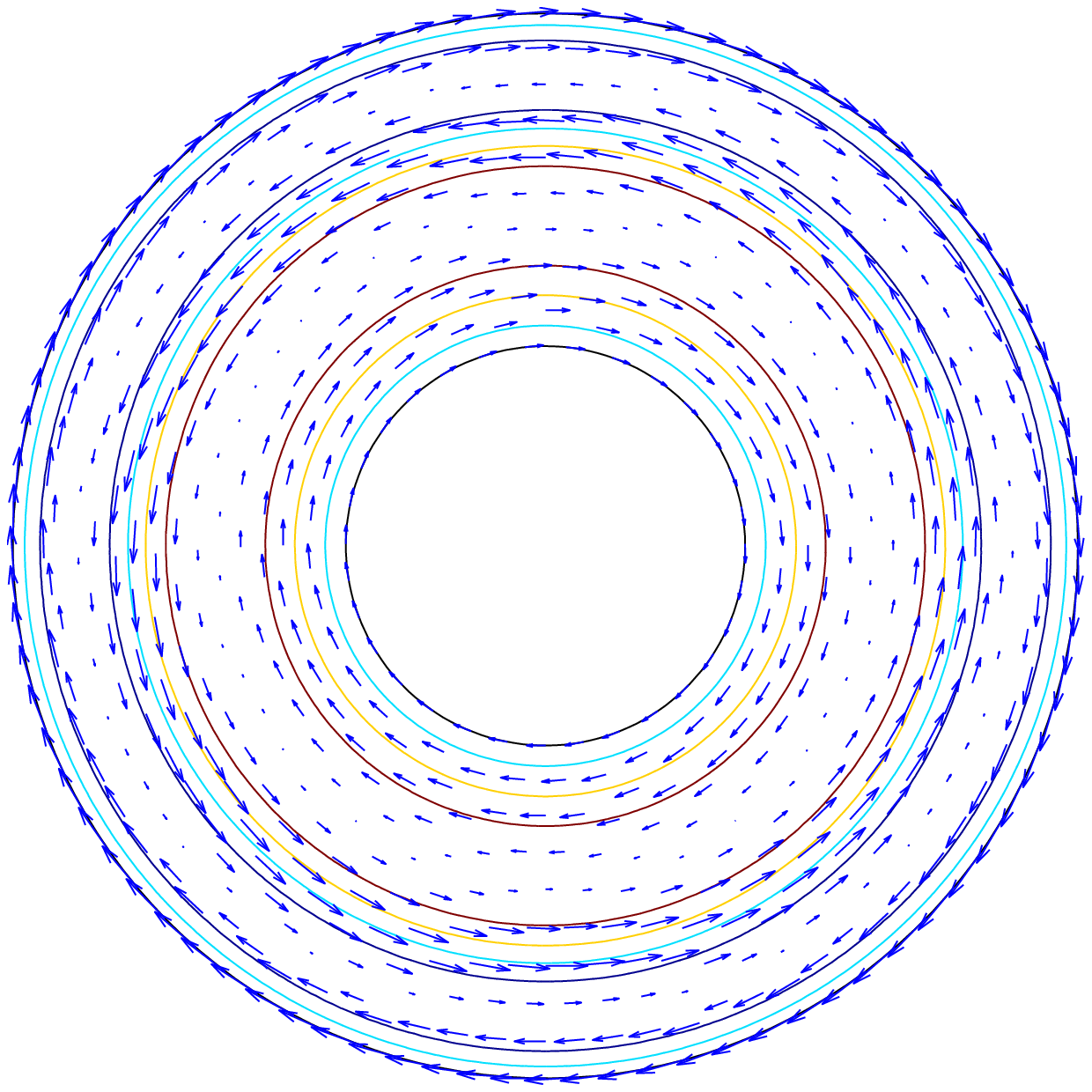}\label{RBFleray_annulus}} &
\subfigure[$P_{curl}\RBFDFtangent=\nabla\RBFpresstangent$]{\includegraphics[width=0.37\textwidth]{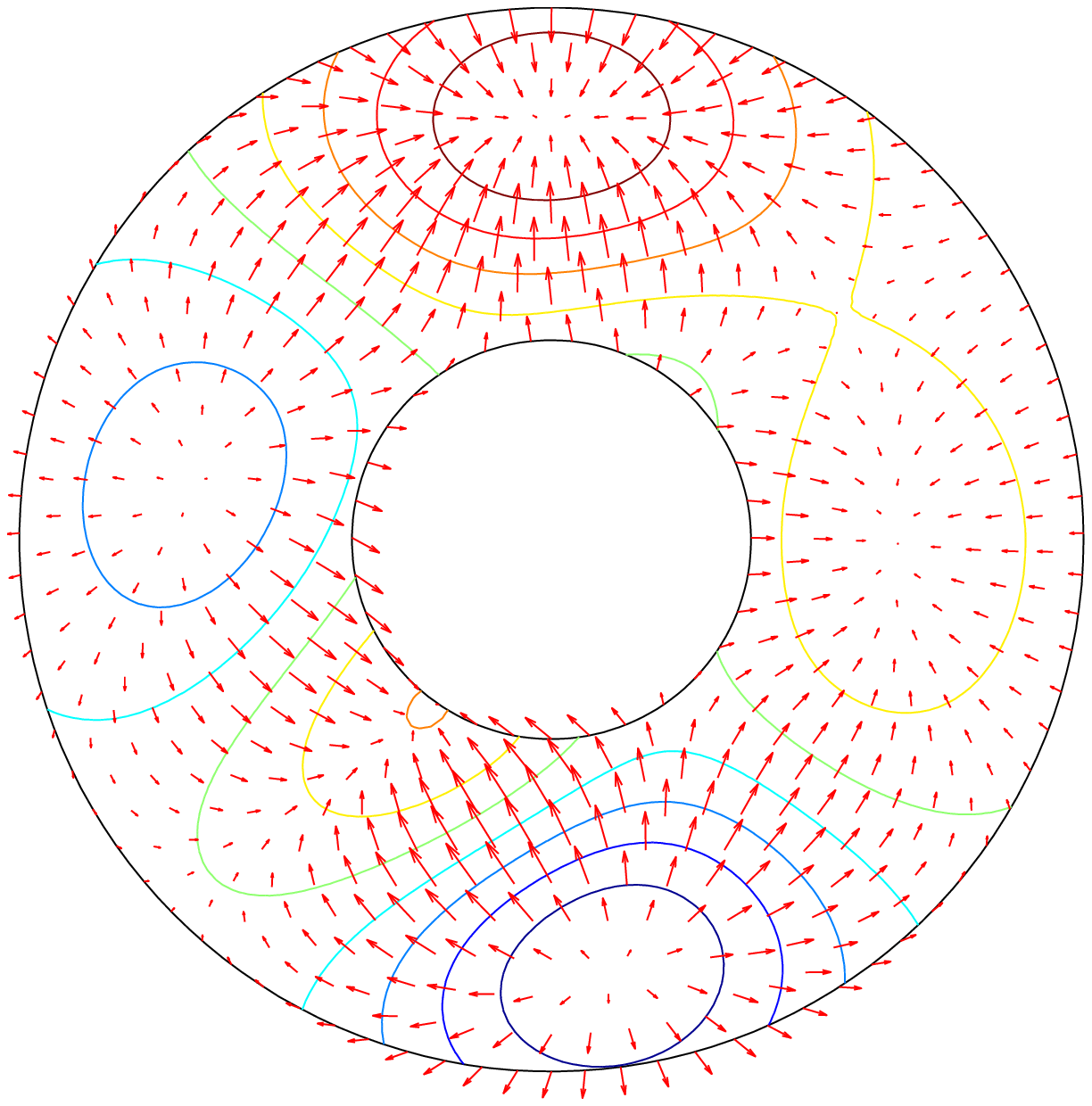}\label{RBFcf_annulus}}  
\end{tabular}
\caption{The kernel decomposition of $\vf$ using $\RBFDFtangent = P_{div}\RBFDFtangent + P_{curl}\RBFDFtangent$. The contours represent the potentials $\RBFstreamtangent$ and $\RBFpresstangent$.}
\end{figure}

%
In the next experiment, we computed the full Helmholtz-Hodge decomposition (HHD) of $\vf$ on a slightly more complicated domain, and in the process obtained evidence for the bound in Theorem \ref{cfnthm}. Recall that the full HHD is given by
\begin{equation}\label{fulldecomp}
\vf = \Pcfn\vf + P_L\vf + \nabla h,
\end{equation}
where $\Pcfn\vf$ is the curl-free normal component of $\vf$ from Proposition \ref{hodgeprop2}, $P_L\vf$ is the Leray projection, and $h$ is a harmonic function. We used the same target function \eqref{targetfun}, but on the domain pictured in Figure \ref{cfnormfieldplot}. As in the previous test, several quasi-uniform node sets were generated using the \verb1distmesh1 package with sizes ranging from $N=486$ to $N=16882$ (see \cite{FuselierHHDdata}). Samples of $\vf$ at these sites were used to obtain approximations to each term in \eqref{fulldecomp} using the method described below.
\begin{figure}
\centering
\begin{tabular}{cc}
\subfigure[Convergence for the Annulus Experiment]{\includegraphics[width=0.4\textwidth]{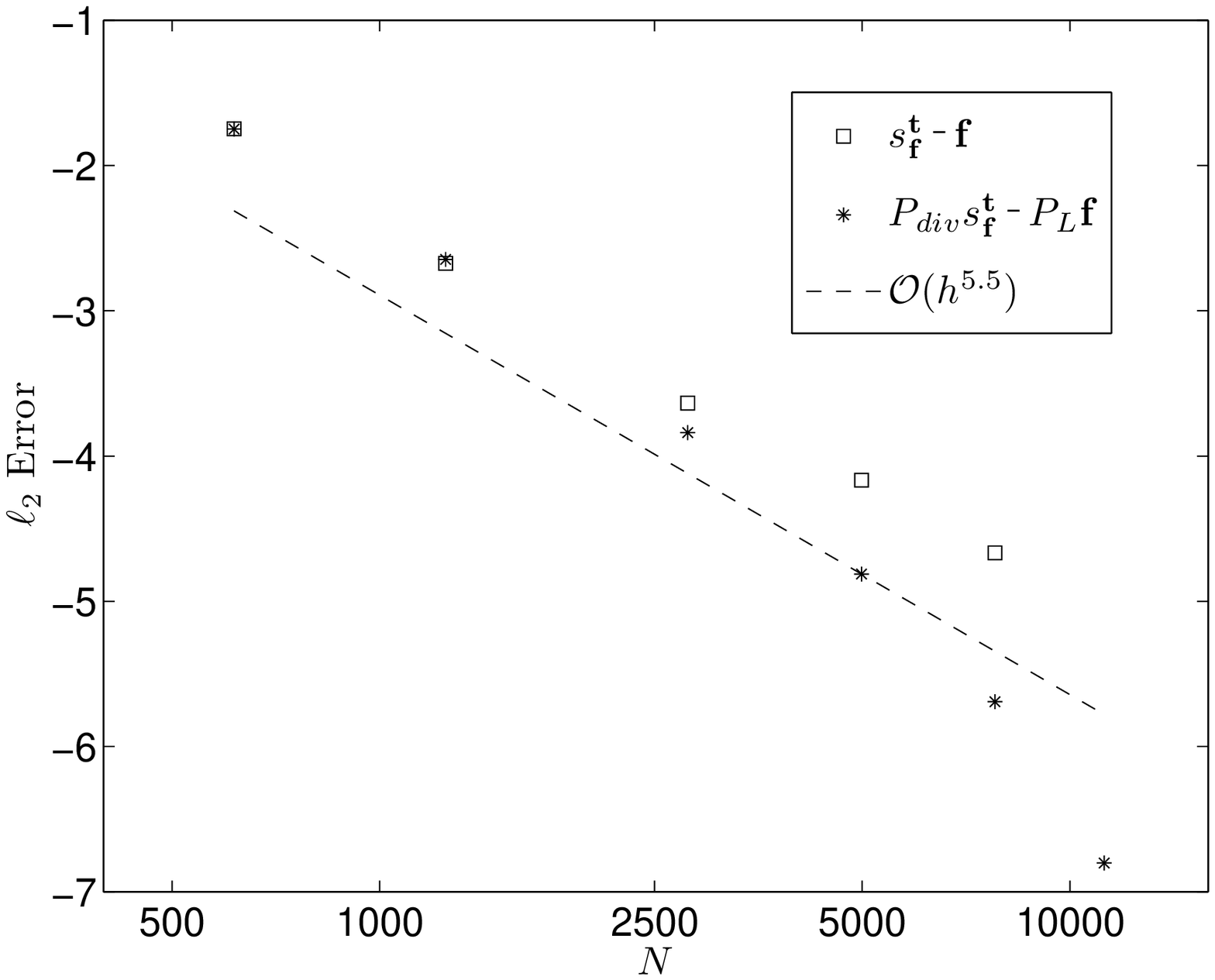}\label{convergence}} &
\subfigure[Convergence for the Full HHD Experiment]{\includegraphics[width=0.4\textwidth]{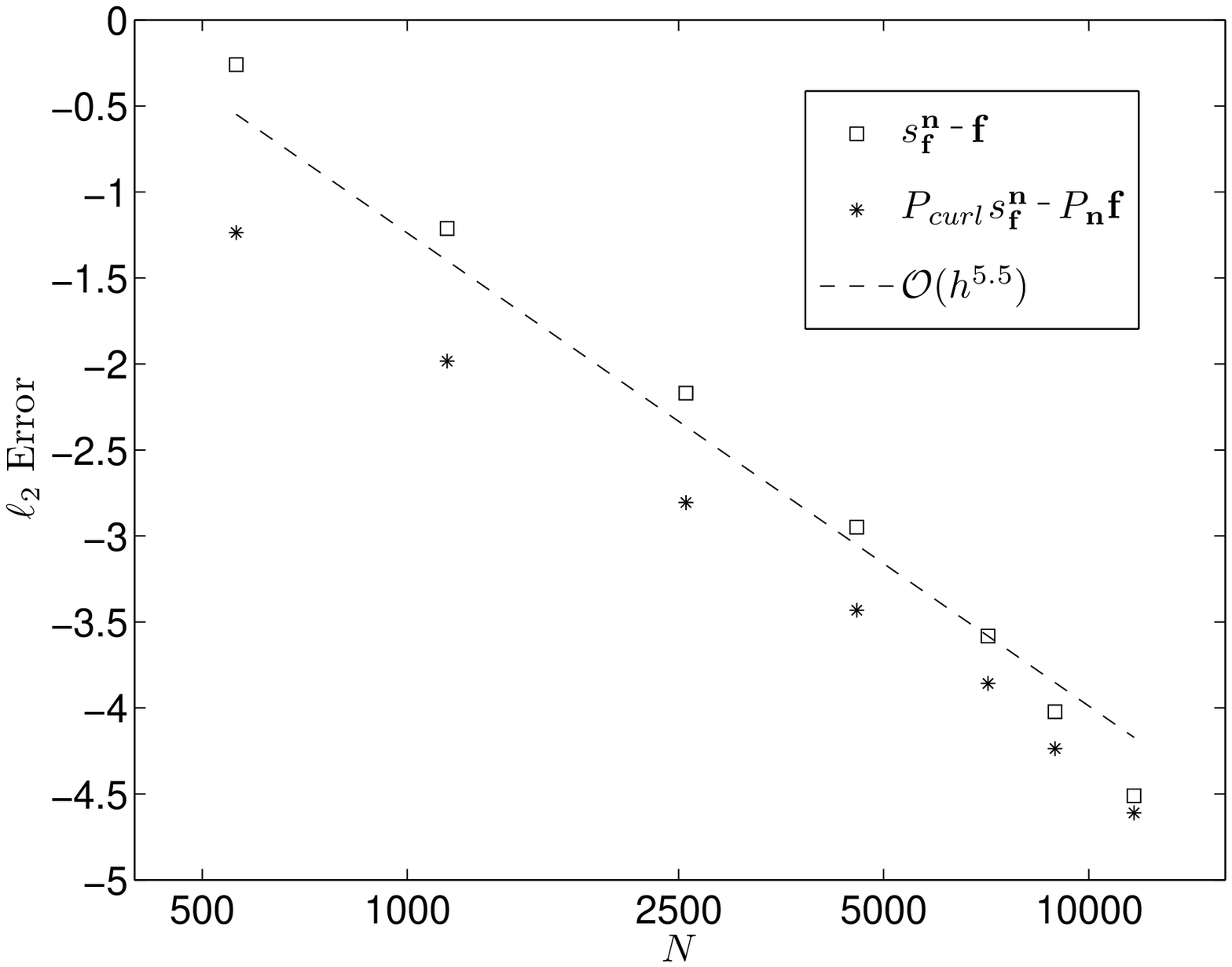}\label{convergence_tplannulus}}
\end{tabular}
\caption{Convergence results for each numerical experiment. The vertical axis gives the logarithm of the relative $\ell_2(X)$ error (base 10), and the horizontal axis gives $N$ on a $\log_{10}$ scale.} 
\end{figure}

The first step of the two-step process is to construct an interpolant of $\vf$ with curl-free boundary conditions of form \eqref{eq:curlfreenormalform} that solves the system \eqref{cfsystem}. Let $\RBFCFnormal$ denote this interpolant and note that $P_{curl}\RBFCFnormal$ approximates $\Pcfn\vf$. Second, decompose $P_{div}\RBFCFnormal$ to approximate $P_L\vf$ and $\nabla h$ by using an interpolant with divergence-free boundary conditions of the form \eqref{eq:divfreeform} that solves \eqref{system} (with $g=0$ and $\vf$ replaced by $P_{div}\RBFCFnormal$). Denote this interpolant by $\RBFDFtangent$, and note that $P_{div}\RBFDFtangent\sim P_L\vf$ and $P_{curl}\RBFDFtangent\sim \nabla h$.  These steps give approximations to the three components of the decomposition of $\vf$, which are plotted in Figure \ref{fullhodge}, together with contour plots of the corresponding potential functions. 
\begin{figure}%
\centering
\begin{tabular}{lll}
\subfigure[Curl-Free Normal Portion]{\includegraphics[width=0.27\textwidth]{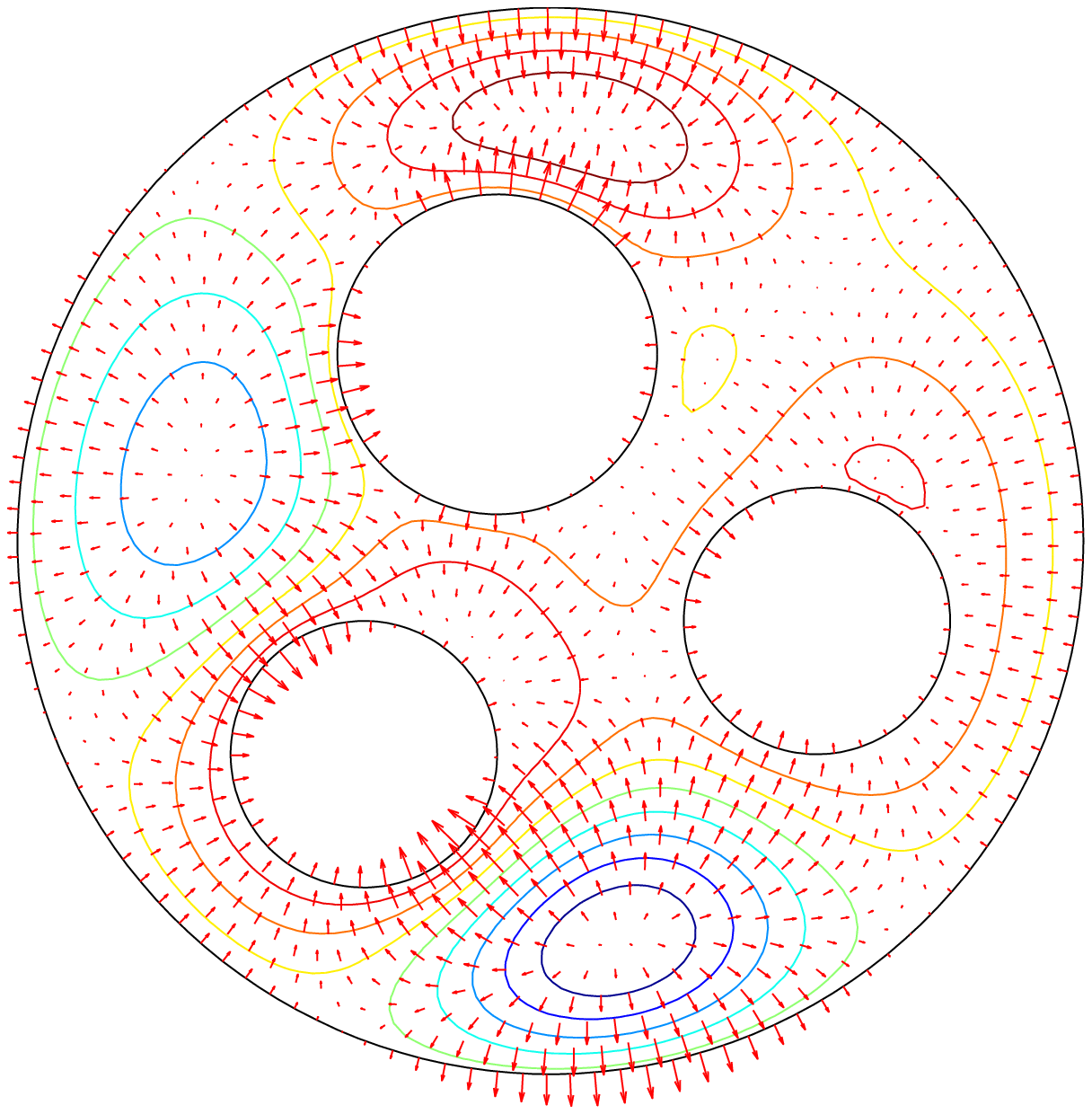}\label{cfnormfieldplot}} &
\subfigure[Leray Projection]{\includegraphics[width=0.27\textwidth]{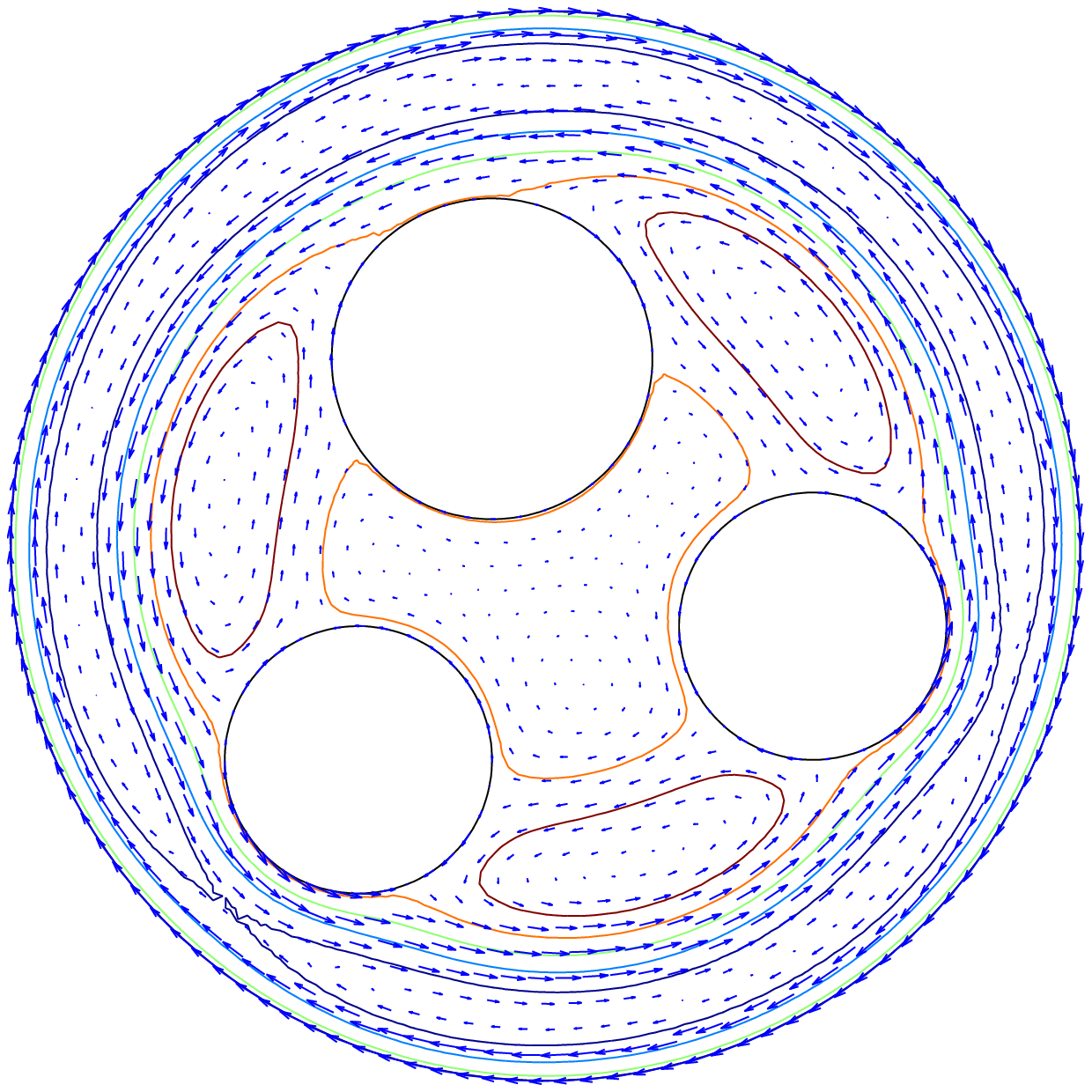}\label{lerayfieldplot}} &
\subfigure[Harmonic Portion]{\includegraphics[width=0.27\textwidth]{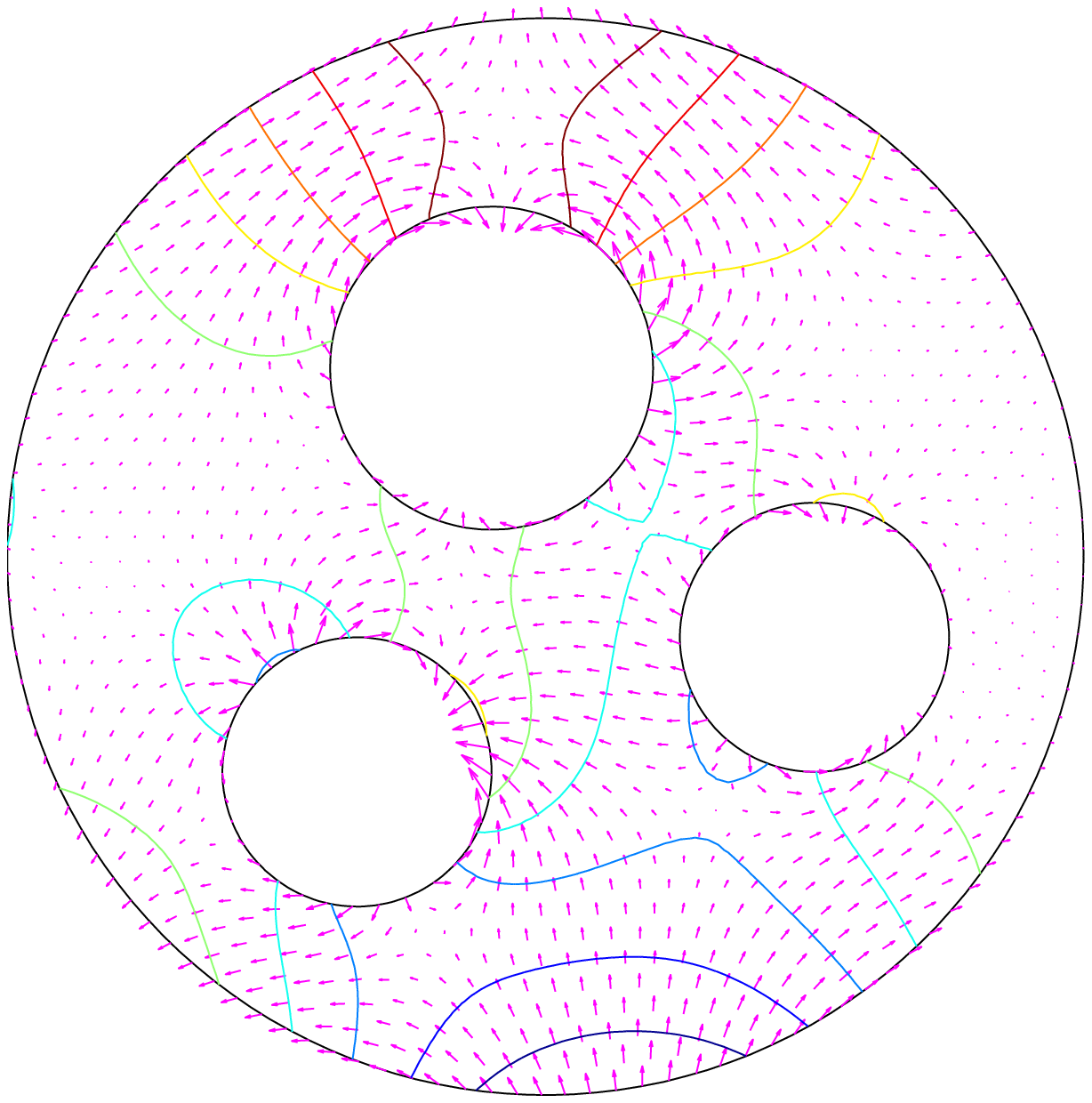}\label{harmonicfieldplot}} \\
\end{tabular}
\caption{The kernel approximation of the full HHD for the target field $\vf$ \eqref{targetfun}, with contours of each term's scalar potential.}\label{fullhodge}
\end{figure}

With regard to convergence, we did not measure the error directly because the exact decomposition for $\vf$ on this domain is unknown to us. Nevertheless, we estimated the rate of convergence by using each approximation on the finest node set as proxies for the true solution. To measure the error corresponding to $\Pcfn\vf$, for example, we used $\|P_{curl}\RBFCFnormal - \nabla p \|_{\ell_2(X)}$ where $\nabla p$ is the kernel approximation to $\Pcfn\vf$ on the finest node set $X$ (with $\# X = 16882$). We also tested the error between the generalized interpolant $\RBFCFnormal$ and $\vf$. Lemma \ref{lemma:fullapprox} and Theorem \ref{cfnthm} dictate that the $L_2(\Omega)$ errors should all decay like $\calO(h^{5.5})$. A loglog plot of error versus $1/\sqrt{N} \sim h$ is given in Figure \ref{convergence_tplannulus}, where the errors seem to be converging like $\calO(h^{5.5})$.
%
%
\section{Concluding Remarks}

Decompositions with other boundary conditions are certainly also possible. If no boundary conditions are specified, one can find an interpolant $\vs_\vf$ using only shifts of positive definite kernel $\Phi= -\Delta\phi I$. Enforcing $\vs_\vf|_X = \vf|_X$ leads to a positive definite system, and since $\Phi=\Phi_{div} + \Phi_{curl}$, $\vs_\vf$ decomposes trivially. This idea was used in a decomposition technique using thin plate splines introduced in earlier work \cite{AmodeiBenbourhim_1991}.
For other boundary conditions, if the functionals associated with the interpolation and boundary conditions are linearly independent and the Reisz representers are chosen as basis functions, then the kernel decomposition can be constructed. In this way, one could impose a whole host of boundary conditions in vector decomposition problems, and do so in a natural way.

\bibliographystyle{abbrv}


\bibliography{RBFvectordecompositions_arxiv_03052015}

\end{document}